\newtheorem*{theoremA}{Theorem A}
\newtheorem*{theoremB}{Theorem B}
\newtheorem*{theoremC}{Theorem C}
\newtheorem*{theoremD}{Theorem D}
\newtheorem*{theoremE}{Theorem E}
\newtheorem*{theoremF}{Theorem F}
\newtheorem{Thm}{Theorem}
\newtheorem{Lemma}{Lemma}
\newtheorem{cor}{Corollary}
\newtheorem{Proposition}{Proposition}
\title[Subsets of orderable groups]
{On the structure of subsets\\ of an orderable group \\ with some small doubling properties}
\author[Freiman, Herzog, Longobardi, Maj, Plagne, Robinson \and Stanchescu ]
{G. A. Freiman, M. Herzog, P. Longobardi, M. Maj, \\  A. Plagne, D. J. S. Robinson \and Y.V. Stanchescu}
\address{Gregory A. Freiman, Marcel Herzog
\newline 
\indent School of Mathematical Sciences, Tel Aviv University,
Tel Aviv 69978, Israel}
\email{~grisha@post.tau.ac.il}
\email{~herzogm@post.tau.ac.il}
\address{Patrizia Longobardi, Mercede Maj
\newline
\indent Dipartimento di Matematica, Universita' di Salerno, Via Giovanni Paolo II, 132, 84084 Fisciano (Salerno), Italy}
\email{~plongobardi@unisa.it}
\email{~mmaj@unisa.it}
\address{Alain Plagne
\newline
\indent Centre de Math\'ematiques Laurent Schwartz, \'Ecole polytechnique, 91128 Palaiseau Cedex,France}
\email{~plagne@math.polytechnique.fr}
\address{Derek J. S. Robinson
\newline 
\indent Department of Mathematics, University of Illinois at Urbana-Champaign, Urbana, IL 61801, USA}
\email{~dsrobins@illinois.edu}
\address{Yonutz V. Stanchescu
\newline
\indent Afeka Academic College, Tel Aviv 69107, Israel
\newline 
and
\newline
\indent The Open University of Israel, Raanana 43107, Israel}
\email{~yonis@afeka.ac.il {\rm \it and} ~ionut@openu.ac.il}
\thanks{{\it Keywords}: Inverse problems; small doubling; nilpotent groups; ordered groups}
\thanks{{\it Mathematics Subject Classification 2010}: Primary 11P70; Secondary
20F05, 20F99, 11B13, 05E15.}
\begin{document}
\maketitle

\section{Introduction}

Let $G$ denote an arbitrary group (multiplicatively written). If $S$ is a subset of $G$,
we define its {\em square} $S^2$ by the formula
$$
S^2=\{x_1x_2 \mid x_1,x_2\in S\}.
$$
In the abelian context, $G$ will usually be additively written and we 
shall rather speak of {\em sumsets} and specifically of the {\em double} of $S$, namely
$$
2S=\{x_1+x_2 \mid x_1,x_2\in S\}.
$$

Here, we are concerned with the following general problem: for two real numbers $\alpha \geq 1$ 
and $\beta$, determine the {\em structure} of $S$ if $S$ is a finite subset 
of a group $G$ satisfying an inequality on cardinalities of the form
$$
|S^2| \leq \alpha|S| + \beta
$$
when $\alpha$ is small and $|S|$ is typically large. 

Problems of this kind are called {\em inverse problems} of {\em small doubling} type in 
additive number theory. 
The coefficient $\alpha$ (or more precisely the ratio $|S^2|/|S|$) is called the 
{\em doubling coefficient} of $S$.
This type of problems became the most central issue in additive combinatorics.
Inverse problems of small doubling type have been first investigated 
by G.A. Freiman very precisely in the additive group of the integers 
(see \cite{F1}, \cite{F2}, \cite{F3}, \cite{F4}) and by many other 
authors in general abelian groups, starting with M. Kneser  \cite{K}
(see, for example, \cite{Ham},  \cite{LS}, \cite{Bilu}, \cite{Ru}, \cite{GR}).  
More recently, small doubling problems in non-necessarily abelian groups 
have been also studied, see \cite{G},  \cite{Sa} and \cite{BGT} for 
recent surveys on these problems and  \cite{NA} and \cite{TV} 
for two important books on the subject.

It is easy to prove that if $S$ is a  finite subset of $\mathbb{Z}$, then
$$
2|S|-1 \leq  |2S|  \leq \frac{|S|(|S|+1)}2.
$$
Moreover $|2S| = 2|S|-1$ if and only if  $S$ is a (finite) arithmetic progression, 
that is, a set of the form
$$
\{a, a+q, a+2q, \dots, a+(t-1)q\}
$$
where $a$, $q$ and $t$ are three integers, $t \geq 1$, $q \geq 0$. 
The parameter $t$ is called the {\em size} of the arithmetic progression and $q$ its 
{\em difference} 
(we shall use {\em ratio} in the multiplicative notation).
In the articles \cite{F1} and \cite{F2}, G.A. Freiman proved the following more general  
results. The first result is referred 
to as the $3k-4$ theorem.
  
\begin{theoremA}
Let $S$ be a finite set of integers with at least three elements. 
If  $|2S|\leq 3|S|-4$, then $S$ is contained in an arithmetic progression of size $|2S|-|S|+1 \leq 2 |S|-3$. .
\end{theoremA}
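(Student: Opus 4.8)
The plan is to reduce the statement to a single lower bound for $|2S|$ in terms of the diameter of $S$, and then to extract that bound. First I would normalize. The quantities $|S|$ and $|2S|$ are invariant under every affine map $x \mapsto ux+v$ with $u \in \mathbb{Z}\setminus\{0\}$ and $v \in \mathbb{Z}$, and such a map carries arithmetic progressions to progressions of the same size; so I may translate so that $\min S = 0$ and then divide by $d=\gcd(S)$ to arrange $\gcd(S)=1$. Writing $k=|S|$ and $0=a_0<a_1<\cdots<a_{k-1}=\ell$, the smallest progression of difference $1$ containing $S$ is $\{0,1,\dots,\ell\}$, of size $\ell+1$. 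Hence the conclusion of the theorem is precisely the inequality $\ell+1 \le |2S|-|S|+1$, that is, $\ell \le |2S|-k$ (any progression of size $\ell+1$ may be enlarged to one of size exactly $|2S|-|S|+1$). The secondary inequality $|2S|-|S|+1 \le 2|S|-3$ asserted in the statement is just a rewriting of the hypothesis $|2S|\le 3|S|-4$ and requires no argument.

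The heart of the matter is the following lower bound, which I would isolate as a lemma: \emph{if $\gcd(S)=1$, $0,\ell\in S$ and $k\ge 3$, then $|2S|\ge \min(k+\ell,\,3k-3)$.} Granting it, the theorem is immediate: the hypothesis gives $|2S|\le 3k-4<3k-3$, so the minimum cannot equal $3k-3$; it must therefore equal $k+\ell$, and the lemma yields $k+\ell\le|2S|$, which is exactly the required bound $\ell\le|2S|-k$. The coprimality hypothesis is indispensable here: for $S=\{0,2,4,\dots,2(k-1)\}$ one has $|2S|=2k-1$ while $\min(k+\ell,3k-3)=3k-3$, so the bound fails; this is why the normalization is carried out before the lemma is applied.

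Proving the lemma is where the real work lies. The baseline is immediate: $2S \supseteq S\cup(\ell+S)$, the two sets lying in $[0,\ell]$ and $[\ell,2\ell]$ and meeting only at $\ell$, whence $|2S|\ge 2k-1$. To push this up to $\min(k+\ell,3k-3)$ I would argue by induction on $k$, the case $k=3$ being a short direct check. For the inductive step I would delete the top point and set $S'=S\setminus\{\ell\}$, with maximum $\ell'=a_{k-2}$; then $2S=2S'\cup(\ell+S)$, and the translate $\ell+S$ contributes at least the sums $\ell+a_i$ that exceed $2\ell'$, which are automatically absent from $2S'\subseteq[0,2\ell']$. The main obstacle is bookkeeping: deleting $\ell$ can destroy coprimality and shrink the diameter, so before reapplying the inductive bound one must rescale $S'$ by $\gcd(S')$ and track how this interacts with the count of new sums in the overlap band $[\ell,2\ell']$. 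Controlling that case analysis cleanly is the crux. An alternative I would keep in reserve is a purely direct count: bounding $|2S\cap[0,\ell]|$ from below by producing, from each consecutive gap $a_i-a_{i-1}\ge 1$, a new element of $2S$, which replaces the induction by a more explicit but rather delicate accounting of the coincidences among the sums $a_i+a_j$.
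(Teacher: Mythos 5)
Your normalization and reduction are correct: after translating to $\min S=0$ and dividing by $\gcd(S)$, the conclusion is equivalent to $\ell\le|2S|-k$, and this would follow at once from the inequality $|2S|\ge\min(k+\ell,3k-3)$ together with the hypothesis $|2S|\le 3k-4$. The difficulty is that this inequality \emph{is} the $3k-4$ theorem --- the two statements are equivalent reformulations of one another --- so the proposal as written relocates the entire content of the theorem into a lemma and then does not prove the lemma. (Note that the paper itself offers no proof either: Theorem A is quoted from \cite{F1} and \cite{F2}, so the benchmark is Freiman's classical argument, which your sketch does not reproduce.) What you do establish unconditionally is only the trivial bound $|2S|\ge 2k-1$ from $S\cup(\ell+S)$, which is far from sufficient.

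The point at which your inductive sketch would actually fail is the one you yourself flag. Take $S=\{0,2,4,\dots,2m,\ell\}$ with $\ell$ odd and much larger than $2m$. Then $S'=S\setminus\{\ell\}$ has $\gcd(S')=2$, and after rescaling the inductive hypothesis only gives $|2S'|\ge\min((k-1)+m,\,3k-6)$, with $m=\ell'/2$ possibly tiny compared to $\ell'$; meanwhile the translate $\ell+S$ contributes at most $k$ new sums. Summing these two contributions does not reach $\min(k+\ell,3k-3)$, so the induction cannot be closed by the bookkeeping you describe: one must additionally exploit that $S'$ lies in an arithmetic progression of difference $d'\ge 2$ while $\ell$ does not, which forces many sums $\ell+a_i$ and $a_i+a_j$ into distinct residue classes. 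That extra structural argument (or an equivalent one, e.g.\ the Lev--Smeliansky approach via $|A+B|$, or Freiman's original covering argument) is the real proof, and it is absent. As it stands the proposal is a clean and correct reduction plus an honest statement of where the work lies, but not a proof.
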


The second result goes one step further. It is called the $3k-3$ theorem.

\begin{theoremB}
Let $S$ be a finite set of integers with at least $3$ elements.
If $|2S| = 3|S|-3$, then one of the following holds:
\begin{itemize}
\item[(i)] $S$ is a subset of an arithmetic progression of size at
most $2|S| - 1$,
\item[(ii)] $S$ is 
the union of two arithmetic progressions with the same difference,
\item[(iii)] $|S| = 6$ and $S$ is Freiman-isomorphic to the set $K_6$, where			 $$
K_6 = \{(0,0), (1,0), (2,0), (0,1), (0,2), (1,1)\}.
$$
\end{itemize}
\end{theoremB}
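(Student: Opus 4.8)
The plan is to reduce to a normalized configuration and then argue by induction on $k=|S|$, splitting on the diameter of $S$. First I would exploit the affine invariance of the doubling coefficient: translating $S$ and dividing out the greatest common divisor of its difference set changes neither $|S|$ nor $|2S|$ and preserves all three conclusions. So I may assume $S=\{a_0,a_1,\ldots,a_{k-1}\}$ with $a_0=0<a_1<\cdots<a_{k-1}=n$ and $\gcd(a_1,\ldots,a_{k-1})=1$. The basic object is the strictly increasing chain
$$
0<a_1<\cdots<a_{k-1}=n<n+a_1<\cdots<2n,
$$
formed by the sums $a_0+a_i$ and $a_{k-1}+a_i$; it exhibits the $2k-1$ elements of $S\cup(n+S)$, two translates meeting only in $n$. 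Since $|2S|=3k-3$, there remain exactly $k-2$ further elements of $2S$ to be located, and controlling where they can sit is the whole game.

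Second, I would dispose of the small-diameter case. If $n\le 2k-2$, then $S\subseteq\{0,1,\ldots,n\}$ lies in an arithmetic progression of size $n+1\le 2k-1$, which is conclusion (i). Hence from now on I assume $n\ge 2k-1$ and aim to establish (ii) or (iii), using the reflection $x\mapsto n-x$ (which fixes the normalization and swaps the roles of the two extreme points) to treat the two ends symmetrically.

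Third, the crux: pass to $S'=S\setminus\{n\}$, with $|S'|=k-1$. The two top sums $2n$ and $n+a_{k-2}$ always lie strictly above $\max(2S')=2a_{k-2}$, so deleting $n$ removes at least two elements of the sumset, and the argument branches on the exact size of this drop. If the drop is at least $4$, then $|2S'|\le 3k-7=3(k-1)-4$ and Theorem A places $S'$ inside a short arithmetic progression; if the drop is exactly $3$, then $|2S'|=3k-6=3(k-1)-3$ and the inductive hypothesis gives one of (i), (ii), (iii) for $S'$. In both cases, knowing the structure of $S'$ together with the constraint $n\ge 2k-1$ and the precise count of new sums reattached at $n$ lets me reconstruct $S$ and conclude that it is a union of two progressions of equal difference (conclusion (ii)), possibly collapsing into (i). The remaining, and hardest, case is when the drop is exactly $2$ at \emph{both} ends: this rigidity forces $n$ and $2n-\text{(min reflected)}$ to be represented internally and pins the positions of the $k-2$ excess sums $a_i+a_j$ ($1\le i\le j\le k-2$) into very few admissible gaps of the chain.

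Finally, the main obstacle is isolating the sporadic configuration (iii) out of this doubly-rigid regime. Here the excess sums are squeezed so tightly that, once $S$ fails to split as a union of two equal-difference progressions, the coincidence constraints become over-determined and can be met only for $k=6$ by the Freiman-isomorphism class of $K_6$; the bookkeeping of which pairwise sums may fall into the chain's gaps, the verification that $|2K_6|=15=3\cdot6-3$, and the check that this two-dimensional example genuinely resists reduction to (i) or (ii) are where essentially all the difficulty concentrates.
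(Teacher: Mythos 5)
This statement is Theorem B, Freiman's classical $3k-3$ theorem; the paper does not prove it but imports it from \cite{F1} and \cite{F2}, so there is no in-paper proof to compare against. Judged on its own terms, your proposal is a reasonable roadmap in the classical style (normalize, handle small diameter, induct by deleting the maximum and branching on the drop $|2S|-|2S'|$), and the easy steps are correct: the chain argument accounting for $2k-1$ sums, the disposal of the case $n\le 2k-2$, and the observation that the drop is always at least $2$ because $2n$ and $n+a_{k-2}$ exceed $2a_{k-2}$.

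However, the proposal has genuine gaps at exactly the points where the theorem is hard. First, in the branches with drop $\ge 3$ you invoke Theorem A or the inductive hypothesis to learn that $S'$ is \emph{contained} in a short progression (or is a union of two progressions, or is a copy of $K_6$ when $k-1=6$), and then assert that "reconstruction" yields conclusion (ii) for $S=S'\cup\{n\}$. This does not follow without a careful count: containment in an AP is weaker than being an AP, so a priori $S'\cup\{n\}$ could be a union of three progressions; one must use the requirement that $k-4$ (resp.\ $k-3$) of the sums $n+a_i$ fall back into $2S'$ to pin down $S'$ and the position of $n$, and one must separately handle the inductive case where $S'$ is itself of type (ii) or of the sporadic type (iii). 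None of this bookkeeping is supplied. Second, and more seriously, the case where the drop equals $2$ at both ends --- which you correctly identify as the heart of the matter and the source of the $K_6$ configuration --- is not argued at all beyond the assertion that the constraints become "over-determined" and can only be met for $k=6$. That rigidity analysis \emph{is} the proof of the $3k-3$ theorem; without it the proposal is an outline rather than a proof.
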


Recall that two sets are Freiman isomorphic if they behave with respect to addition in the same way 
(there is a one-to-one 
correspondence between the two sets as well as between their two sumsets). 
Notice that translations and dilations are transformations which send a set on 
another set which is Freiman 
isomorphic to it. For instance, in case (iii) of the above-stated theorem, such a set 
$S$ of integers is, 
up to translation and dilation, of the form 
$$
\{ 0,1,2, u, u+1, 2u \}
$$
for an integer $u \geq 5$.

Freiman also investigated in these papers the exact structure of subsets 
$S$ of the additive group $\mathbb{Z}$ if $|2S| = 3|S|-2$ (Theorem $3k-2$) and more generally 
of small doubling sumsets of $\mathbb{Z}^d$ (see for instance \cite{S2}).

Since then, far-reaching generalizations have been obtained, see \cite{BGT}. Although very powerful and general, 
these last results are not very precise.

In papers \cite{FHLM}, \cite{FHLMS2}, \cite{FHLMS3}, \cite{FHLMS1} and \cite{FHLMPS}, we started the precise investigation of 
small doubling problems for subsets of an ordered group. We recall that if $G$ is a group 
and $\leq$  is a total order relation defined
on the set $G$, then $(G, \leq)$ is an {\em ordered group} if for all $a,b,x,y\in G$ 
the inequality $a\leq b$ implies that $xay\leq xby$, and  a group $G$ is {\em orderable} 
if there exists an order $\leq$  on the set $G$ such that $(G, \leq)$ is an ordered group. 
Obviously the group of  integers with the usual ordering is an ordered group. More generally, 
it is possible to prove that a nilpotent group is orderable if and only if it is torsion-free 
(see, for example, \cite{MA} or \cite{N}).

Extending Freiman's results, we proved in \cite{FHLM} (Corollary 1.4) 
the following theorem, which is analogous to the $3k-4$ theorem.

\begin{theoremC} 
Let $G$ 
be an orderable group and let  
$S$ 
be a finite subset of $G$ with at least three elements. 
If 
$|S^2| \leq 3|S|-4$, then $\langle S \rangle$ is abelian. Moreover, there exists elements 
$x_1$ and $g$ in $G$,
such that $gx_1=x_1g$ and $S$ is a subset of
$$
\{x_1, x_1g, x_1g^2, \dots, x_1g^{u}\}
$$
where $u=|S^2|-|S|$. In other words, $S$ is contained in an abelian geometric 
progression of size 
$|S^2|-|S|+1$.
\end{theoremC}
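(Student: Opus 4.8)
The plan is to use the order on $G$ to linearize $S$, establish the elementary $2|S|-1$ lower bound, and then split the argument into two parts: first proving that $\langle S\rangle$ is abelian, and then deducing the geometric-progression structure from Freiman's results over $\mathbb{Z}^d$.

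First I would fix a total order $\le$ making $(G,\le)$ an ordered group and write $S=\{s_1<s_2<\cdots<s_k\}$ with $k=|S|$, $a=s_1$, $b=s_k$. Since left and right multiplication are order-preserving, $a^2=\min S^2$ and $b^2=\max S^2$, and the two sets $aS=\{as_1<\cdots<as_k\}$ and $Sb=\{s_1b<\cdots<s_kb\}$ glue along $as_k=ab=s_1b$ into a strictly increasing chain of $2k-1$ elements of $S^2$, reproving $|S^2|\ge 2|S|-1$ (with equality forcing a geometric progression). The hypothesis $|S^2|\le 3|S|-4$ thus leaves a budget of only $k-3$ elements of $S^2$ beyond this basic chain.

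The crux, and the step I expect to be the main obstacle, is to show that $\langle S\rangle$ is abelian; for this it suffices to prove that any two elements $x<y$ of $S$ commute. I would argue by contradiction: if $xy\ne yx$, then $x^2<xy,yx<y^2$ already produces two distinct elements strictly between $x^2$ and $y^2$, and the idea is to show that such a non-commuting pair, propagated through $S$ by left and right multiplication, forces strictly more than $k-3$ products beyond the basic chain, contradicting the budget. Concretely I would compare the chain $aS\cup Sb$ with the companion chain $Sa\cup bS$ (both running from $a^2$ to $b^2$) and track how non-commutativity separates them; alternatively, an induction removing the extremal element $b$, controlling how many products $bs_j$ and $s_ib$ are lost, mirrors the classical inductive proof of the $3k-4$ theorem in $\mathbb{Z}$. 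Making this count precise enough to reach the threshold $3|S|-3$ under the assumption of non-commutativity is the delicate point.

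Once $\langle S\rangle$ is known to be abelian, the rest is routine. Being a finitely generated subgroup of an orderable group, $\langle S\rangle$ is torsion-free abelian, hence isomorphic to $\mathbb{Z}^d$; transporting $S$ to a finite set $A\subset\mathbb{Z}^d$ with $|A+A|=|S^2|$, I would invoke Freiman's dimension inequality: if $A$ has affine dimension $d'$, then $|A+A|\ge (d'+1)|A|-\binom{d'+1}{2}$. Since $d'=2$ already gives $|A+A|\ge 3|A|-3>3|A|-4$, the hypothesis forces $d'\le 1$, so $A$ lies on a line and hence in an arithmetic progression. Restricting to this line $\cong\mathbb{Z}$ and applying Theorem A yields that $A$, and therefore $S$, is contained in a progression of size $|S^2|-|S|+1$; back in the abelian group $\langle S\rangle$ this is exactly a set $\{x_1,x_1g,\dots,x_1g^{u}\}$ with $u=|S^2|-|S|$ and $gx_1=x_1g$, completing the proof.
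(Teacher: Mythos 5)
The paper does not actually prove Theorem~C: it is imported verbatim from \cite{FHLM} (Corollary~1.4 there), so there is no internal argument to compare against. Judged on its own terms, your proposal has a genuine gap at precisely the point you flag yourself: the proof that $\langle S\rangle$ is abelian. The observation that a non-commuting pair $x<y$ yields two distinct products $xy\neq yx$ strictly between $x^2$ and $y^2$ costs nothing against the budget of $k-3$ by itself, because those products may already lie on the basic chain $aS\cup Sb$ (for instance $xy\in aS$ whenever $x=\min S$). Turning ``propagate the non-commutativity through $S$'' into a count that actually beats the threshold is the entire content of the theorem: in \cite{FHLM} this is the main result, proved by induction on $|S|$ after removing the maximal element and controlling the resulting loss of products (the mechanism recorded in the present paper as Proposition~\ref{prop1}), together with a substantial case analysis. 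A plan that ends with ``making this count precise \dots is the delicate point'' leaves the hard step unproved.

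By contrast, the second half of your argument is sound and is essentially the standard derivation: once $\langle S\rangle$ is abelian it is finitely generated and torsion-free (orderable groups are torsion-free), hence free abelian of some rank $d$; Freiman's dimension inequality $|A+A|\geq (d'+1)|A|-\binom{d'+1}{2}$ forces the image $A\subset\mathbb{Z}^d$ onto a line under $|A+A|\leq 3|A|-4$, and Theorem~A then places $A$ in an arithmetic progression of size $|S^2|-|S|+1$, which pulls back to the stated geometric progression, with $gx_1=x_1g$ automatic since everything lives in the abelian group $\langle S\rangle$. So the architecture of your write-up is correct and the reduction to the integer $3k-4$ theorem is handled properly, but the theorem's real difficulty --- abelianity --- remains open in your proposal.
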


Furthermore, in the case when $|S^2|  = 3|S|-3$, we obtained  the following initial result (this is our Theorem 1.3 in \cite{FHLM})

\begin{theoremD} 
Let $G$ be an orderable group
and let $S$ 
be a finite subset of $G$ of size at least $3$. 
If $|S^2| \leq 3|S|-3$, then $\langle S \rangle$ is abelian.
\end{theoremD}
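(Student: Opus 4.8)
The plan is to isolate the extremal case and argue by contradiction. By Theorem C, every $S$ with $|S^2| \le 3|S| - 4$ already has $\langle S\rangle$ abelian, so it suffices to treat the sharp case $|S^2| = 3|S| - 3$. Fix an order on $G$, write $S = \{s_1 < s_2 < \cdots < s_k\}$ (we may assume $G = \langle S\rangle$), and suppose for contradiction that $\langle S\rangle$ is nonabelian. The starting point is the pair of monotone chains $s_1 s_1 < s_1 s_2 < \cdots < s_1 s_k$ and $s_1 s_k < s_2 s_k < \cdots < s_k s_k$, which already display $2k - 1$ distinct elements of $S^2$ and which I will repeatedly refine; in particular $s_1^2 = \min S^2$ and $s_k^2 = \max S^2$ are each represented uniquely.

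The core is an induction on $k = |S|$ (the small values of $k$ being checked directly), carried out by deleting the largest element. Set $S' = S \setminus \{s_k\}$, so $|S'| = k - 1$. Deletion of $s_k$ destroys at least the two top elements $s_k^2$ and the second largest element of $S^2$, the latter being one of $s_{k-1}s_k,\, s_k s_{k-1}$ and hence necessarily involving $s_k$; thus $|{S'}^2| \le 3(k-1) - 2$. There are two favourable outcomes: if $|{S'}^2| \le 3(k-1) - 4$, Theorem C gives $\langle S'\rangle$ abelian; if $|{S'}^2| = 3(k-1) - 3$, the inductive form of the present theorem gives the same conclusion. The delicate point is that deletion may remove as few as two elements, leaving $|{S'}^2| = 3(k-1) - 2$, which lies just outside the range we control; I will return to this borderline value below, using the symmetry $x \mapsto x^{-1}$ (which reverses the order and preserves $|S^2|$, so that $s_1$ and $s_k$ play symmetric roles) to relocate the deletion.

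Granting that $\langle S'\rangle$ is abelian, the final step is to add $s_k$ back. Freiman's structure theory for subsets of the ordered abelian group $\langle S'\rangle$ pins down ${S'}^2$ exactly (a geometric progression in the Theorem C range, and one of the configurations of Theorem B in the sharp range). If $s_k$ commuted with every element of $S'$, then $\langle S\rangle$ would be abelian, contrary to assumption; hence some $s_i$ satisfies $s_i s_k \ne s_k s_i$. In the favourable situation where the noncommuting pair can be taken at the top, so that $s_{k-1}s_k \ne s_k s_{k-1}$, both of these products lie strictly above $\max {S'}^2 = s_{k-1}^2$, and together with $s_k^2$ they furnish at least three \emph{new} elements of $S^2$. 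Measured against the exact value of $|{S'}^2|$ supplied by the structure theory, this already forces $|S^2| \ge 3k - 2$, contradicting $|S^2| = 3k - 3$.

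The main obstacle is precisely the tightness of the $3|S| - 3$ budget when the noncommutativity sits in the interior. If $s_k$ commutes with $s_{k-1}$ but fails to commute with some smaller $s_i$, the products $s_i s_k,\, s_k s_i$ need not exceed $\max {S'}^2$, so the simple fork count breaks down; one must instead use the precise geometric-progression or two-progression description of $S'$ to locate these interior products and show they cannot all coincide with existing elements of ${S'}^2$ without violating either the order or the abelian structure. Equally delicate is the excluded truncation $|{S'}^2| = 3(k-1) - 2$, which must be ruled out by a separate finer count after relocating the deletion to whichever extreme is adjacent to the noncommutativity. I expect essentially all the difficulty to lie in these interior and borderline cases: the inversion symmetry and the freedom to delete $s_1$ rather than $s_k$ reduce them but do not eliminate them, and it is the exact abelian structure of $S'$ that ultimately makes the overflow count rigorous by forbidding the product coincidences that would let a nonabelian $S$ masquerade as a set of doubling $3|S| - 3$.
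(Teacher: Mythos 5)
Your proposal is a strategy outline whose two load-bearing steps are left open, and both are precisely where the difficulty of this theorem lives. (For comparison: the present paper gives no proof of Theorem D at all --- it is quoted as Theorem 1.3 of \cite{FHLM} --- but the tool the paper does record, Proposition~\ref{prop1}, is exactly the sharpened deletion lemma your argument is missing.) Deleting $s_k$ only certifies the loss of the two elements $s_k^2$ and $\max(s_{k-1}s_k,\,s_ks_{k-1})$, so you only get $|S'^2|\le 3(k-1)-2$ and the induction does not close. You explicitly defer the borderline value $|S'^2|=3(k-1)-2$ and never return to it; the inversion symmetry does not help, since deleting $s_1$ yields the identical bound at the other end with no reason for either deletion to do better. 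What is actually needed is a \emph{third} element of $S^2\setminus S'^2$, i.e.\ the statement that either $\langle S\rangle$ is $2$-generated abelian or $|S'^2|\le|S^2|-3$ (Proposition~\ref{prop1}); producing that third element when $s_{k-1}$ and $s_k$ commute is the real content of the deletion step, and it is not done here.

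The reassembly step also fails to deliver the contradiction you claim, even in your ``favourable situation''. From $s_{k-1}s_k$, $s_ks_{k-1}$, $s_k^2$ all lying above $\max S'^2$ you get $|S^2|\ge |S'^2|+3$, but to conclude $|S^2|\ge 3k-2$ you would need $|S'^2|\ge 3k-5$, whereas the structure theory you invoke allows $S'$ to be a geometric progression with $|S'^2|=2(k-1)-1=2k-3$, giving only $|S^2|\ge 2k$ --- no contradiction with $|S^2|=3k-3$ for any $k\ge 3$. So the count must instead run through the full chains $s_1s_k<s_2s_k<\cdots<s_ks_k$ and their left-handed mirror against the progression structure of $S'$, which is exactly the interior analysis you acknowledge but do not carry out. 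Two smaller points: Theorem B as stated concerns subsets of $\mathbb{Z}$, so applying it to $S'\subseteq\langle S'\rangle$ requires a Freiman-isomorphism reduction valid in torsion-free abelian groups; and the base case $k=3$ (``checked directly'') is itself a nontrivial case analysis in a general ordered group, as Propositions~\ref{prop3} and~\ref{prop4} indicate.
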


Using  results of Freiman and Stanchescu, we proved in \cite{FHLMPS} (Theorem 1)
the following theorem concerning the more precise structure of such small doubling sets $S$.

\begin{theoremE} 
Let $G$ be an orderable group 
and let $S$ be a finite subset of $G$ of size at least $3$ 
satisfying $|S^2| \leq 3|S|-3$. Then $\langle S \rangle $ is abelian and at most $3$-generated.

Moreover, if $|S|\geq 11$, then one of the following two possibilities occurs:
\begin{itemize}
\item[(i)] $S$ is a subset of a geometric progression of length at most $2|S|-1$,
\item[(ii)] $S$ is a  union of two geometric progressions with the same ratio.

\end{itemize}
\end{theoremE}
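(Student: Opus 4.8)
The plan is to reduce the whole problem to the classical theory of finite subsets of $\mathbb{Z}^d$ and then to feed in the known one- and two-dimensional structure theorems. First, Theorem D already tells us that $\langle S\rangle$ is abelian; since $G$ is orderable it is torsion-free, and $\langle S\rangle$ is finitely generated, so $\langle S\rangle\cong\mathbb{Z}^r$ for some $r\ge 1$. Writing the group additively, the hypothesis reads $|2S|\le 3|S|-3$, and I may view $S$ as a finite subset of $\mathbb{Z}^r\subseteq\mathbb{R}^r$. Under this translation a geometric progression of fixed ratio becomes an arithmetic progression of fixed common difference, so conclusions (i) and (ii) turn into ``$S$ lies on one line'' and ``$S$ lies on two parallel lines''.

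Next I would bound the dimension. Let $d$ be the affine dimension of $S$ in $\mathbb{R}^r$. Freiman's dimension inequality gives $|2S|\ge (d+1)|S|-\binom{d+1}{2}$ for a genuinely $d$-dimensional set, and combined with $|2S|\le 3|S|-3$ this forces $(d-2)|S|\le\binom{d+1}{2}-3$. Since a $d$-dimensional set needs at least $d+1$ points, the case $d\ge 3$ is quickly seen to be impossible, so $d\le 2$. Because the rank of $\langle S\rangle$ exceeds the affine dimension of $S$ by at most one, we get $r\le 3$; hence $\langle S\rangle$ is free abelian of rank at most three and, in particular, at most $3$-generated. This establishes the first assertion for every $|S|\ge 3$, with no largeness hypothesis.

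For the structural dichotomy I would assume $|S|\ge 11$ and split according to $d$. If $d\le 1$ then $S$ is contained in a single line, hence Freiman-isomorphic to a finite subset of $\mathbb{Z}$, and the integer theorems apply: Theorem A handles the range $|2S|\le 3|S|-4$ (always landing in (i)), while Theorem B handles $|2S|=3|S|-3$; its sporadic alternative (iii) is excluded because $K_6$ is two-dimensional and has only six elements, leaving precisely (i) or (ii). If $d=2$ then Freiman's inequality is forced to be an equality, $|2S|=3|S|-3$, so $S$ is an extremal two-dimensional set; here I would invoke the Freiman--Stanchescu classification of such sets, which says that once $|S|$ is large enough every extremal two-dimensional set lies on two parallel lines, giving conclusion (ii).

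The main obstacle is the two-dimensional case. The dimension reduction itself is clean, but the extremal classification in dimension two is delicate: one must prove that, apart from a finite list of small sporadic configurations (of which $K_6$ is the prototype), every genuinely two-dimensional set with $|2S|=3|S|-3$ splits as two parallel arithmetic progressions. The bound $|S|\ge 11$ is precisely what eliminates every such sporadic configuration, so the real work is to extract the right threshold from the Freiman--Stanchescu analysis and to verify that $11$ suffices.
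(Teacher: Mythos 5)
This paper does not actually prove Theorem E --- it is quoted from \cite{FHLMPS} (Theorem 1 there) with the remark that it was obtained ``using results of Freiman and Stanchescu'' --- so there is no in-paper argument to compare yours against, and I can only judge your outline on its own terms. The reduction you set up is correct and is surely the intended one: Theorem D gives that $\langle S\rangle$ is abelian; orderability gives torsion-freeness, hence $\langle S\rangle\cong\mathbb{Z}^r$; Freiman's dimension lemma $|2S|\ge (d+1)|S|-\binom{d+1}{2}$ together with $|S|\ge d+1$ correctly forces the affine dimension $d\le 2$ (for $d=3$ it gives $|S|\le 3<4$, and it only gets worse for larger $d$); and since $\langle S\rangle$ is generated by one element $s_0$ of $S$ together with the differences $s-s_0$, which generate a subgroup of rank at most $d$, you get $r\le d+1\le 3$. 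The collinear case is correctly disposed of by Theorems A and B, with the sporadic set $K_6$ excluded because $|S|\ge 11>6$.

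The genuine gap is exactly where you locate it, and it is not a small one: the two-dimensional case. Your entire conclusion (ii) for $d=2$ rests on an unnamed and unstated ``Freiman--Stanchescu classification'' of planar sets with $|2S|=3|S|-3$, and you concede that extracting the correct threshold and verifying that $11$ suffices is ``the real work.'' That classification is precisely the nontrivial external content that \cite{FHLMPS} imports from Freiman and Stanchescu; without a precise statement (with its hypotheses and threshold) or a proof, your argument establishes only the first assertion of Theorem E and the $d\le 1$ branch of the dichotomy. A secondary, repairable omission: even granting that $S$ lies on two parallel lines, conclusion (ii) asserts that $S$ \emph{equals} a union of two geometric progressions with the same ratio. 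You still need the equality analysis --- the three parallel lines carrying $2S$ must each have minimal cardinality, which forces each of the two pieces of $S$ to be a full arithmetic progression and forces their common differences to coincide (via the equality case of $|A+B|\ge|A|+|B|-1$). That step is short, but as written it is missing.
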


We also proved the following result concerning the  structure of $\langle S \rangle$ 
in the case when $|S^2| = 3|S|-2$ (see  \cite{FHLMPS}, Theorems 2 and 5). To state it,
recall the important notation
$$
[a, b] =a^{-1}b^{-1}ab \quad \text{ and } \quad a^b=b^{-1}ab.
$$

\begin{theoremF}  Let $G$ be an orderable group 
and let $S$ be a finite subset of $G$ of size at least $3$.

If $|S^2| = 3|S|-2$, then one of the following holds:
\begin{itemize}
\item[(i)] $ \langle S \rangle$ is abelian and at most 4-generated,
\item[(ii)]  $\langle S \rangle = \langle a, b\rangle$, with $[a,b]\neq 1$ and 
$[[a,b], a] = [[a,b], b] = 1$. In particular, $\langle S \rangle $ is nilpotent of class 2,
\item[(iii)]  $\langle S \rangle = \langle a, b\rangle$, with $ a^{b^2} = aa^b$ and $[a, a^b] = 1$,
\item[(iv)]  $\langle S \rangle = \langle a, b\rangle$, with $a^b = a^2$. In particular, 
$\langle S \rangle $ is a quotient of the Baumslag-Solitar group B(1,2),
\item[(v)] $\langle S \rangle = \langle a \rangle \times   \langle c, b \rangle$, with
$c^b = c^2$,  $|S| = 4$ and  $S = \{a, ac, ac^2, y\}$, where $y$ is a suitable element of $\langle c, b \rangle$.
\end{itemize}
\end{theoremF}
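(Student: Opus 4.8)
The plan is to fix a bi-invariant order $\le$ on $G$ and write $S=\{s_1<s_2<\cdots<s_k\}$ with $k=|S|\ge 3$. The standard chain
\[
s_1s_1<s_1s_2<\cdots<s_1s_k<s_2s_k<\cdots<s_ks_k
\]
shows $|S^2|\ge 2k-1$, so the hypothesis $|S^2|=3k-2$ leaves a surplus of exactly $k-1$ products beyond this minimal chain, and the whole argument amounts to understanding how these $k-1$ surplus products can occur. I would split immediately according to whether $H:=\langle S\rangle$ is abelian.

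First dispose of the abelian case, which must yield (i). Since $H$ is finitely generated, torsion-free (being orderable) and abelian, $H\cong\mathbb{Z}^r$ with $r=\operatorname{rank}H$, and I identify $S$ with a finite subset of $\mathbb{Z}^r$. Applying Freiman's dimension inequality $|2S|\ge(d+1)|S|-\binom{d+1}{2}$, where $d$ is the affine dimension of $S$, together with the trivial bound $|S|\ge d+1$, the hypothesis $|2S|=3|S|-2$ forces $(d-2)|S|\le \binom{d+1}{2}-2$; a short computation shows this is incompatible with $|S|\ge d+1$ once $d\ge 4$, so $d\le 3$ (with $d=3$ only when $|S|=4$). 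Hence $r\le d+1\le 4$ and $H$ is at most $4$-generated.

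For the non-abelian case the order is the main tool: for $i<j$ the products $s_is_j$ and $s_js_i$ are distinct exactly when $[s_i,s_j]\ne 1$, so each non-commuting pair consumes surplus. Because only $k-1$ surplus products are available, I expect to show that after deleting one extremal element (say $s_1$ or $s_k$) the resulting set $T$ of size $k-1$ satisfies $|T^2|\le 3|T|-3$. Theorems D and E then apply to $T$: the subgroup $\langle T\rangle$ is abelian, at most $3$-generated, and $T$ lies in a single geometric progression of ratio $g$ or in a union of two progressions of common ratio $g$. This produces an abelian ``backbone'' over which the deleted element $s$ must act by conjugation.

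The remaining step is to read off the defining relations of $H=\langle T,s\rangle$. The conjugation action of $s$ on $\langle T\rangle$ is an automorphism, and the surviving equalities $s_is_j=s_{i'}s_{j'}$ forced by $|S^2|=3k-2$ translate into commutator identities that pin this automorphism down to a short list: the central relation $[[a,b],a]=[[a,b],b]=1$ (case (ii)); the relation $a^{b^2}=aa^b$ with $[a,a^b]=1$ (case (iii)); the doubling action $a^b=a^2$ of the Baumslag--Solitar group $B(1,2)$ (case (iv)); and the sporadic $|S|=4$ configuration $S=\{a,ac,ac^2,y\}$ inside $\langle a\rangle\times\langle c,b\rangle$ with $c^b=c^2$ (case (v)), orderability being used in each branch to exclude the extra torsion or orientation-reversing actions that the bare relations would otherwise permit. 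This non-abelian analysis is the principal obstacle: unlike the regime $|S^2|\le 3k-3$, where Theorem D forces commutativity outright, at the exact value $3k-2$ genuinely non-abelian and even non-nilpotent groups survive (cases (iii) and (iv) are metabelian but not nilpotent), so one cannot reduce to nilpotency of bounded class. The delicate points are proving that the $k-1$ surplus products can be spent in only the finitely many patterns listed — ruling out higher-rank non-abelian behaviour — and handling the boundary case where the deletion bound is tight, which is exactly where the sporadic configuration (v) at $|S|=4$ emerges.
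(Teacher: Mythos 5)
This statement is not proved in the present paper at all: it is Theorem 2/5 of the cited preprint \cite{FHLMPS}, quoted here as an input to the main theorem. So your proposal has to stand on its own, and as it stands it has a concrete gap at the crucial reduction step. You claim that after deleting an extremal element of $S$ the remaining set $T$ satisfies $|T^2|\le 3|T|-3$, so that Theorems D and E force $\langle T\rangle$ to be an abelian ``backbone.'' But the actual deletion lemma (Proposition 1 in Section 2, i.e.\ Lemma 4 of \cite{FHLMPS}) only gives $|T^2|\le |S^2|-3=3|S|-5=3|T|-2$, and the borderline $|T^2|=3|T|-2$ with $\langle T\rangle$ non-abelian genuinely occurs: for $S=\{x,xc,\dots,xc^{k-1}\}$ with $c^x=c^2$ in $B(1,2)$ (case (iv) of the theorem, realized in Theorem 3 of this paper), every truncation is again a set of the same non-abelian type. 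So the intended reduction to an abelian $\langle T\rangle$ fails exactly on the families that cases (ii)--(v) are meant to capture; the correct argument must instead run an induction in which $T$ satisfies the same hypothesis $|T^2|=3|T|-2$, which is how the present paper argues in Theorems 2 and 3. Beyond this, the step ``the surviving equalities translate into commutator identities that pin the automorphism down to a short list'' is precisely the content of the theorem and is not carried out: you do not show why only the four relation types $[[a,b],a]=[[a,b],b]=1$, $a^{b^2}=aa^b$ with $[a,a^b]=1$, $a^b=a^2$, and the sporadic $|S|=4$ configuration can occur, nor why $\langle S\rangle$ must be $2$-generated in the non-abelian case. Your treatment of the abelian case via Freiman's dimension inequality is fine and matches the intended use of Freiman--Stanchescu results, but the non-abelian case, which is the whole theorem, remains a plan rather than a proof.
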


If $\langle S \rangle$ is an abelian group which is at most 4-generated and $|S^2| = 3|S|-2$, 
then the structure  of $S$ can be deduced from  previous results of Freiman and 
Stanchescu (see \cite{FHLMPS}, Theorem 2). 
The aim of this paper is to go one step further in the non-abelian case. 
Namely, we shall present a complete description of the structure of $S$ if $S$ is a finite 
subset of an orderable group $G$ with $|S^2| = 3|S|-2$ and $\langle S \rangle$ is non-abelian.
The following result will be our main theorem.

 \begin{Thm}
 \label{1ertheo}
Let $(G, \leq)$ be an ordered group and let $S$  be a finite subset of $G$ of size at least $4$. 
We assume that $|S^2| = 3|S|-2$ and that $\langle S \rangle$ is non-abelian. 

Then one of the following holds:
\begin{itemize}
\item[(i)] $S=\{a, ac, \dots, ac^i, b, bc, \dots, bc^j \},$  where   
$[a, c] = [ b,c] = 1, c > 1$ and either $ab = bac$ or $ba = abc$,
 \item[(ii)]  $ S = \{x, xc, xc^2, \dots, xc^{k-1}\},$ where $ c > 1$ and either  
$ c^x = c^2$  or  $(c^2)^x = c$,
\item[(iii)]  $ |S| = 4$  and  the  structure of  $S$  is of one of the following types:
\begin{itemize}
\item[(a)]  either $S=\{x,xc,xc^x,xc^{x^2}\}$ or $S=\{x^{-1},x^{-1}c,x^{-1}c^x,x^{-1}c^{x^2}\}$,
with $c>1$ and $c^{x^2}=cc^x=c^xc$,
\item[(b)] $S=\{1,c,c^2,x\}$, where either $c^x=c^2$ or $(c^2)^x=c$,
\item[(c)] $S=\{x,xc,xc^2,y\}$, where $[c,x]=1$ and either $[x,y]=c=(c^2)^y$ or $[y,x]=c^2=c^y$,
\item[(d)] $S=\{x,xc,xc^2,y\}$, where $[c,x]=[x,y]=1$ and either $c^y=c^2$ or $(c^2)^y=c$.
\end{itemize}
\end{itemize}
Moreover, if either (i) or (ii) or case (b) of (iii) holds, then $|S^2| = 3|S|-2$.
\end{Thm}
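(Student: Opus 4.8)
The plan is to leverage the structural classification provided by Theorem F, which guarantees that whenever $|S^2|=3|S|-2$ with $\langle S\rangle$ non-abelian, the group $\langle S\rangle$ falls into one of the four non-abelian cases (ii)--(v). My strategy is to treat each of these group-theoretic cases separately and, within each, determine precisely which finite subsets $S$ can actually realize the equality $|S^2|=3|S|-2$ with $\langle S\rangle$ equal to the full group. The main tool throughout will be the order $\leq$ on $G$: in an ordered group one can unambiguously list the elements of $S$ in increasing order as $s_1<s_2<\cdots<s_n$, and a counting argument on the product set $S^2$ becomes available because $s_1^2$ is the strict minimum and $s_n^2$ the strict maximum of $S^2$, while intermediate products can be sandwiched. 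This is the same mechanism underlying Theorems C and D, and it converts the cardinality hypothesis into rigid combinatorial constraints on how the $s_is_j$ coincide.

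First I would fix notation by writing $|S|=n$ and ordering $S$ as $s_1<\cdots<s_n$. The key preliminary observation is that in an ordered group the $2n-1$ products $s_1s_1<s_1s_2<\cdots<s_1s_n<s_2s_n<\cdots<s_ns_n$ are distinct and increasing, so $|S^2|\geq 2n-1$ always; the hypothesis $|S^2|=3n-2$ then says there are exactly $n-1$ additional products beyond this ``staircase.'' I would analyze where these extra products can lie and which coincidences among the $s_is_j$ they force. In the abelian-type cases this recovers the geometric-progression structure, but here the commutator relations from Theorem F interact with the ordering. For each group case I would introduce generators $a,b$ (or $c,x$) as in Theorem F, express a general element of $S$ in a normal form dictated by the defining relations (for instance in case (iv), $a^b=a^2$ forces a normal form $b^i a^j$), and then impose that the products fill out exactly $3n-2$ values. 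The bulk of the work is case (ii)--(v) of Theorem F mapping onto cases (i)--(iii) of the present theorem.

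The hard part, I expect, will be the final ``moreover'' direction together with the fine separation in case (iii). For the ``moreover'' claim I must verify that the listed families in (i), (ii), and (iii)(b) genuinely satisfy $|S^2|=3|S|-2$, which requires an explicit and careful enumeration of all products $s_is_j$ using the stated relations (e.g. $ab=bac$, $c^x=c^2$) and checking that coincidences collapse $S^2$ to exactly the right size rather than something smaller or larger; the ordering must be used to confirm no unexpected collisions occur. The genuinely delicate obstacle is distinguishing the four subcases (a)--(d) of (iii) when $|S|=4$, since with so few elements the combinatorial rigidity from the counting argument is weakest and one must instead solve, essentially by hand, the system of equations expressing which of the sixteen products coincide. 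Here I would work out the commutator identities (such as $c^{x^2}=cc^x=c^xc$ in (a)) as forced consequences of $|S^2|=10$, eliminating configurations that would yield $\langle S\rangle$ abelian or a doubling other than $3|S|-2$, and matching the surviving possibilities against the group structures permitted by Theorem F. I would finish by noting that cases (iii)(a), (c), (d) are not asserted to achieve equality in general position, which is why the ``moreover'' statement is restricted to (i), (ii), and (iii)(b).
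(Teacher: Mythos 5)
Your proposal follows essentially the same route as the paper: reduce via Theorem F to the four possible non-abelian structures of $\langle S\rangle$, then in each case use the ordering, normal forms, and a count of coincidences among the products $s_is_j$ to pin down which subsets achieve $|S^2|=3|S|-2$, finishing with direct verification of the converse for families (i), (ii) and (iii)(b). The paper's concrete implementation of the counting argument you sketch is an induction on $|S|$ obtained by deleting the maximal element and sandwiching $|T^2|$ between the lower bound $3|T|-2$ from Theorem D and the upper bound $|S^2|-3$ from Proposition \ref{prop1}, but this is the same mechanism you describe rather than a different approach.
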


This  paper is organized as follows. 
In Section \ref{sec2}, we record  some useful results from \cite{FHLMPS} and \cite{FHLMS1}.
In Section  \ref{sec3}, we first study the group  $\langle a, b \ | a^{b^2} = aa^b, [a, a^b] = 1\rangle$. 
Then we investigate  the structure of $S$ if $\langle S \rangle =  \langle a, b \ | 
a^{b^2} = aa^b, [a, a^b] = 1\rangle$ and $|S^2| = 3|S|-2$. We prove in Theorem \ref{2emetheo}
that these assumptions
imply that $|S|\leq 4$  and we present a complete description of $S$ if $|S| = 4$ and $|S^2| = 10.$
In Section \ref{sec4} we record some basic results concerning the Baumslag-Solitar group $B(1,2)$,
mainly from \cite{FHLMS3}. 
Then we present in Theorem \ref{3emetheo}  a complete description of subsets $S$ of $B(1,2)$ 
satisfying $\langle S \rangle = G$ and $|S^2| = 3|S| -2.$ 
Notice that small doubling 
problems in the Baumslag-Solitar groups have been also studied  in \cite{FHLMS2}.
Finally, in Section \ref{sec5} we prove Theorem \ref{1ertheo}.
We refer to books \cite{BR}, \cite{F3} \cite{R} and \cite{Rob81} for notation and definitions.

\section{Some useful results}
\label{sec2}

Here we collect the following useful results from \cite{FHLMPS} and \cite{FHLMS1}.

\begin{Proposition} [see \cite{FHLMPS}, Lemma 4]
\label{prop1}
Let $G$ be an orderable  group 
and let $S$ be a finite subset of $G$ of size at least $2$. 
If $T$ denotes $S \setminus \{ \max S \}$, then

either $\langle S \rangle$ is a 2-generated abelian group, or $|T^2| \leq |S^2|-3$.
\end{Proposition}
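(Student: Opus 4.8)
The plan is to fix an order $\le$ on $G$ compatible with multiplication and to exploit the fact that both left and right multiplication are strictly order-preserving. Write $S=\{s_1<s_2<\cdots<s_n\}$ with $n\ge2$, so that $\max S=s_n$ and $T=\{s_1,\dots,s_{n-1}\}$. The whole argument rests on locating, near the top of $S^2$, three elements that cannot belong to $T^2$. Since $s_{n-1}=\max T$, we have $\max T^2=s_{n-1}^2$, and every element of $S^2$ not of the form $s_is_j$ with $i,j\le n-1$ lies in $Ss_n\cup s_nS$; that is, $S^2\setminus T^2\subseteq Ss_n\cup s_nS$.

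First I would record the three candidates $s_n^2$, $s_{n-1}s_n$ and $s_ns_{n-1}$. Each exceeds $\max T^2=s_{n-1}^2$: indeed $s_n>s_{n-1}$ gives $s_{n-1}s_n>s_{n-1}^2$ and $s_ns_{n-1}>s_{n-1}^2$ upon multiplying by $s_{n-1}$ on the appropriate side, while $s_n^2>s_{n-1}^2$ likewise. Hence all three lie in $S^2\setminus T^2$. Moreover $s_{n-1}s_n<s_n^2$ and $s_ns_{n-1}<s_n^2$, so $s_n^2$ is distinct from the other two. Consequently, if $s_{n-1}s_n\ne s_ns_{n-1}$, then these are three distinct elements of $S^2\setminus T^2$, giving $|S^2|\ge|T^2|+3$, which is the desired inequality.

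It remains to treat the case $s_{n-1}s_n=s_ns_{n-1}$, i.e.\ when $\max S$ and the second largest element commute. Here only two new elements, $s_{n-1}s_n$ and $s_n^2$, are visible so far, and the plan is to produce a third from the remaining products $s_is_n$ and $s_ns_i$ with $i\le n-2$, using again that $Ss_n=\{s_1s_n<\cdots<s_n^2\}$ and $s_nS=\{s_ns_1<\cdots<s_n^2\}$ are strictly increasing chains. If some such product falls outside $T^2$, we finish as before. The main obstacle is the residual rigid configuration in which every $s_is_n$ and $s_ns_i$ with $i\le n-2$ already lies in $T^2$, so that $|S^2\setminus T^2|=2$. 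In that situation I would argue that the two order-isomorphic chains $Ss_n$ and $s_nS$ must coincide and sit inside $T^2$ apart from their common top pair; tracking these coincidences through the order should force $s_n$ to commute with every $s_i$ and force $S$ to be a geometric progression, whence $\langle S\rangle=\langle s_{n-1},s_n\rangle$ is abelian and $2$-generated. Converting this combinatorial rigidity into the algebraic statement that $\langle S\rangle$ is $2$-generated abelian, rather than merely bounding $|S^2|$, is the delicate point, since no a priori doubling bound such as Theorem C is available to invoke here.
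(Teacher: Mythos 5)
Your first step is correct and is the natural one: since $\max T^2=s_{n-1}^2$ and each of $s_{n-1}s_n$, $s_ns_{n-1}$, $s_n^2$ strictly exceeds $s_{n-1}^2$, the conclusion $|T^2|\le |S^2|-3$ is immediate whenever $s_{n-1}s_n\ne s_ns_{n-1}$. (For the record, the paper does not prove this proposition at all; it is imported from \cite{FHLMPS}, Lemma 4, so there is no in-paper argument to measure you against.) But the entire content of the lemma lies in the case $s_{n-1}s_n=s_ns_{n-1}$, and there what you have written is a plan, not a proof, as you yourself concede in the last sentence. Moreover the two pivots of that plan are unjustified. There is no reason the chains $Ss_n$ and $s_nS$ ``must coincide'': already for $n=3$ with $s_1s_2<s_2s_1$, the order and counting constraints alone permit the configuration $s_1s_3=s_2s_1$, $s_3s_1=s_2^2$, $s_2s_3=s_3s_2$, in which the two chains agree only in their top two entries. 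Ruling this out is an algebraic, not an order-theoretic, matter: from $s_3=s_2^2s_1^{-1}$ and $s_2s_3=s_3s_2$ one deduces $s_1s_2=s_2s_1$, contradicting the assumption --- and it is precisely this kind of manipulation, carried out for every residual configuration and every $n$, that your sketch omits.

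The second unjustified pivot is the target itself. You aim to show that $S$ is a geometric progression with $s_n$ commuting with everything, but that is stronger than what the rigid configuration yields and is not what the dichotomy asserts; what must actually be proved is that the system of identities $s_is_n=s_as_b$, $s_ns_i=s_cs_d$ (with $a,b,c,d\le n-1$) together with $[s_{n-1},s_n]=1$ forces $\langle S\rangle$ to be abelian and generated by two of its elements --- in the surviving $n=3$ case, for instance, one gets $s_1s_3=s_3s_1=s_2^2$ and hence $s_3\in\langle s_1,s_2\rangle$ with $\langle s_1,s_2\rangle$ abelian, and $S$ need not be a geometric progression. Without an induction scheme or an explicit treatment of these configurations for general $n$ (which is where torsion-freeness and the compatibility of the order with multiplication must be invoked), the proof is incomplete at exactly the point where the statement has content.
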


In the next proposition, we use the notation $\dot\cup$ for a disjoint union.

\begin{Proposition} [see \cite{FHLMPS}, Proposition 3]
\label{prop2}
Let $G$ be an orderable group 
and let $S$ be a finite subset of $G$ of size at least $3$ 
satisfying $|S^2| = 3|S|-2$. Suppose that $S = T \dot\cup \{y\}$, 
where $\langle T \rangle$ is abelian. Then either $\langle S \rangle$ is abelian, or 
$$S = \{x, xc, \dots, xc^{k-2}, y \}$$ 
and one of the following holds: 
\begin{itemize}
\item[(i)] $[c,x] = [c, y] = 1$ and either $\ [x, y] = c$ or $[y,x] = c$,
\item[(ii)] $|S| = 4$, $[c,x]=1$  and either $[x, y] = c=(c^2)^y$ or 
$[y, x] = c^2=c^y$,
\item[(iii)] $|S| = 4$, $[c, x]=[x, y] = 1$ and either $c^y = c^2$  or $(c^2)^y = c$.
\end{itemize}
\end{Proposition}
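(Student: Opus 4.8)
The plan is to first pin down the structure of $T$ and then analyse how $y$ interacts with it. Since $\langle T\rangle$ is abelian, and $T$ sits inside the orderable (hence torsion-free) group $\langle T\rangle$, I would aim to show that $T$ has \emph{minimal} doubling, i.e. $|T^2| = 2|T|-1$, which forces $T$ to be a geometric progression $\{x, xc, \dots, xc^{k-2}\}$ with $c>1$ and $[x,c]=1$ (the abelian minimal-doubling characterisation, a special case of Theorem C). To obtain the bound $|T^2|\le 2|T|-1$ I would combine the hypothesis $|S^2| = 3|S|-2$ with the decomposition
$$
S^2 = T^2 \cup Ty \cup yT \cup \{y^2\},
$$
and count how many elements of $Ty\cup yT\cup\{y^2\}$ must lie outside $T^2$. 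Here the order on $G$ makes $y^2$ and the extreme translates of $T$ by $y$ genuinely new, while Proposition \ref{prop1} supplies the structural dichotomy (either $\langle S\rangle$ is abelian, in which case we are done, or removing a maximal element drops $|S^2|$ by at least $3$) needed to force $|T^2|$ down to its minimal value.

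Once $T$ is known to be a geometric progression, I would expand $S^2$ explicitly. Writing $T^2 = \{x^2c^i : 0\le i\le 2k-4\}$ (a progression of length $2k-3$), together with $Ty = \{xc^iy\}$ and $yT = \{yxc^i\}$, the requirement $|S^2| = 3|S|-2$ becomes a precise statement about the number of coincidences among these products. Since we are in the non-abelian branch we have $Ty\neq yT$, so $yx = xy[x,y]^{-1}$ with $[x,y]\ne 1$, and the size of $Ty\cup yT$ is governed entirely by where $[x,y]$ sits relative to $\langle c\rangle$. The calibration $|S^2| = 3|S|-2$ forces $Ty\cup yT$ to be a single $\langle c\rangle$-coset-progression of length $k$, which in turn forces $[x,y]=c$ or $[y,x]=c$ together with the centrality $[c,y]=1$ (so that $c^iy = yc^i$); this yields conclusion (i). Translating the ordered coincidences of products into these commutator identities, and ruling out the configurations in which $\langle S\rangle$ would collapse back to abelian, is the technical heart of the argument.

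The main obstacle, and the source of the exceptional cases, is that $y$ need not commute with $T$, so the left and right translates $yt$ and $ty$ occupy different positions in the order and must be tracked individually; the counting that works cleanly for long progressions degenerates when $k$ is small. Concretely, when $|S|=4$ the progression $T=\{x,xc,xc^2\}$ has only three terms, and such short progressions admit extra product coincidences in which conjugation by $y$ acts on $c$ as a nontrivial automorphism. I would therefore isolate the case $|S|=4$ and carry out a separate finite analysis of the possible coincidence patterns, producing the relations $[x,y]=c=(c^2)^y$ and $[y,x]=c^2=c^y$ of (ii) and the relations $c^y=c^2$ and $(c^2)^y=c$ of (iii). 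The delicate bookkeeping of these short-progression coincidences, rather than any single hard inequality, is where I expect the real work to lie.
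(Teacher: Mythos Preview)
This paper does not actually contain a proof of Proposition~\ref{prop2}: it is quoted verbatim from \cite{FHLMPS} (Proposition~3 there) as one of the ``useful results'' in Section~\ref{sec2}. So there is no proof in the present paper against which to compare your proposal.

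That said, a few remarks on your outline. Your overall strategy---first force $T$ to be a geometric progression via a doubling bound, then analyse the coincidences in $Ty\cup yT$---is the natural one. But the appeal to Proposition~\ref{prop1} is misplaced: that proposition concerns removal of the \emph{maximum} of $S$, whereas here $y$ is singled out only by the abelianness of $\langle T\rangle$ and need not be extremal. A cleaner route to the bound on $|T^2|$ is to observe that if $\langle S\rangle$ is non-abelian then $y\notin\langle T\rangle$, whence $T^2\subseteq\langle T\rangle$ is disjoint from $Ty\subseteq\langle T\rangle y$ and from $yT\subseteq y\langle T\rangle$; moreover $y^2\notin Ty\cup yT$ since $y\notin T$. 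The possibility $Ty=yT$ can be excluded in the non-abelian case because conjugation by $y$ would then permute the finite set $T$, and in an ordered group conjugation is order-preserving, forcing the permutation to be trivial. With these ingredients the counting does pin $|T^2|$ down to $2|T|-1$ (after disposing of the side case $y^2\in T^2$), and your subsequent analysis of the shifted progressions $Ty$, $yT$ is then on firm ground. The separate treatment of $|S|=4$ that you anticipate is indeed where the exceptional families~(ii) and~(iii) emerge.
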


The following two propositions deal with the case when $|S|=3$.

\begin{Proposition} [see \cite{FHLMPS}, Proposition 1]
\label{prop3}
Let $(G, \leq)$ be an ordered group and let $x_1, x_2, x_3$ be three elements of $G$ such that $x_1 < x_2 < x_3$.
Let $S = \{x_1, x_2, x_3\}$.
Suppose that  $\langle S \rangle$ is non-abelian and
either $x_1x_2 = x_2x_1$ or $x_2x_3 = x_3x_2$.  
Then $|S^2| = 7$ if and only if one of the following holds:
\begin{itemize}
\item[(i)]
$S \cap Z(\langle S \rangle) \not= \emptyset$,
\item[(ii)] $S$ is of the form $\{a, a^b, b\}$, where $aa^b = a^ba$.
\end{itemize}
\end{Proposition}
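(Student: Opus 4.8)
The plan is to write $S$ in increasing order as $x_1<x_2<x_3$ and to read off the nine products $x_ix_j$. Since the order is two-sided compatible with multiplication, one always has the chain $x_1^2<x_1x_2<x_1x_3<x_2x_3<x_3^2$, so these five products are distinct and $|S^2|\geq 5$. The four remaining products localize as $x_1^2<x_2x_1<x_2^2<x_2x_3$, \ $x_1^2<x_3x_1<x_3x_2<x_3^2$, and $x_2^2<x_3x_2$; in particular, as $x_1x_2<x_2^2<x_2x_3$, the product $x_2^2$ lies on the chain $C=\{x_1^2,x_1x_2,x_1x_3,x_2x_3,x_3^2\}$ only when $x_2^2=x_1x_3$. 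Hence $|S^2|=7$ precisely when exactly two of $x_2x_1,x_2^2,x_3x_1,x_3x_2$ contribute values outside $C$. This counting dictionary is the backbone of both directions.

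For the reverse implication I would compute $S^2$ directly. If (i) holds, I split according to which element is central. When $x_2$ is central, the non-abelianness forces $x_1x_3\neq x_3x_1$, and cancellation shows that $x_2^2=x_1x_3$ (or $x_2^2=x_3x_1$) would force $x_1$ and $x_3$ to commute; ruling this out leaves exactly the seven values $x_1^2,x_1x_2,x_1x_3,x_2^2,x_2x_3,x_3x_1,x_3^2$. The cases ``$x_1$ central'' and ``$x_3$ central'' are handled the same way, with the analogous degenerate coincidence excluded by cancellation. If (ii) holds, I expand the nine products of $\{a,a^b,b\}$ and observe the two forced coincidences $a\cdot a^b=a^b\cdot a$ and $a\cdot b=b\cdot a^b$, leaving seven candidates $a^2, aa^b, ab, (a^b)^2, a^bb, ba, b^2$; the order together with $a\neq a^b$ and the non-commutation of $a,b$ separates them, giving $|S^2|=7$.

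For the forward implication, which is the heart of the matter, I assume $|S^2|=7$ and treat the case $x_1x_2=x_2x_1$ (the case $x_2x_3=x_3x_2$ follows by applying the order-reversing anti-automorphism $g\mapsto g^{-1}$, under which $|S^2|$ is preserved and the two smallest elements of $S^{-1}$ commute). Here $x_2x_1=x_1x_2\in C$, so the two new values must come from $\{x_2^2,x_3x_1,x_3x_2\}$. A preliminary cancellation argument shows that, given $x_1x_2=x_2x_1$, any coincidence involving $x_2^2$ (either $x_2^2=x_1x_3$ or $x_2^2=x_3x_1$) forces $x_2x_3=x_3x_2$; so under a genuine case ``$x_3$ commutes with neither $x_1$ nor $x_2$'' the product $x_2^2$ is automatically new. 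I then branch on the commutation of $x_3$: if $x_3$ commutes with $x_2$ then $x_2$ is central and we are in (i); if $x_3$ commutes with $x_1$ then $x_1$ is central and we are in (i); if $x_3$ commutes with neither, then all three of $x_2^2,x_3x_1,x_3x_2$ are genuinely non-reversible, and to drop from three new values to two exactly one of $x_3x_1,x_3x_2$ must fall onto $C$. Cancellation leaves only the possibilities $x_3x_1=x_2x_3$, giving $x_1=x_2^{x_3}$, or $x_3x_2=x_1x_3$, giving $x_2=x_1^{x_3}$; in either case $S=\{a,a^b,b\}$ with $a$ and $a^b$ commuting, which is (ii).

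The main obstacle is this last ``commutes with neither'' branch: one must convert the purely numerical constraint $|S^2|=7$ into a single forced coincidence of products, eliminate all coincidences that are incompatible with the ordered-group cancellation (notably every coincidence through $x_2^2$), and then read off the conjugacy relation $x_i=x_j^{x_3}$ that exhibits the form $\{a,a^b,b\}$. Keeping the localization inequalities straight, so that each potential coincidence is pinned to a unique chain element, is what makes the bookkeeping tractable; the remaining work is the routine verification that the degenerate identifications genuinely contradict non-abelianness.
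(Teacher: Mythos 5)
This proposition is imported from \cite{FHLMPS} (Proposition 1 there); the present paper contains no proof of it, so your argument can only be assessed on its own terms. Your skeleton is sound: the chain $x_1^2<x_1x_2<x_1x_3<x_2x_3<x_3^2$, the localization of the four remaining products, the reduction modulo inversion to the case $x_1x_2=x_2x_1$, and the three-way branch on which elements $x_3$ commutes with do lead to (i) and (ii). But two of the steps you dismiss as routine are exactly where the content lies, and as written they do not go through.

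First, in the converse for case (ii) you claim the seven candidates $a^2$, $aa^b$, $ab$, $(a^b)^2$, $a^bb$, $ba$, $b^2$ are separated by ``the order together with $a\neq a^b$ and the non-commutation of $a,b$''. They are not separated by cancellation: the coincidences $a^bb=ba$ (equivalent to $[a,b^2]=1$), $aa^b=ba$ (equivalent to $a^b=b^a$), $(a^b)^2=ab$ and $a^2=a^bb$ all survive, and each needs a genuine order-theoretic exclusion (e.g.\ $[a,b^2]=1$ forces $a=a^b$ because $a<a^b$ would give $a^b<a^{b^2}=a$). The economical repair is to invoke Theorem D: since $\langle S\rangle$ is non-abelian, $|S^2|\geq 3|S|-2=7$, while your two forced coincidences give $|S^2|\leq 7$; the same device also covers the distinctness check you skip in case (i) with $x_2$ central, namely ruling out $x_3x_1=x_1x_2$. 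Second, in the forward direction the assertion that ``cancellation leaves only the possibilities $x_3x_1=x_2x_3$ or $x_3x_2=x_1x_3$'' silently discards the one dangerous alternative $x_3x_1=x_1x_2$. That coincidence gives $x_2=x_3^{x_1}$, which has the shape $\{a,a^b,b\}$ but \emph{without} the commutation $aa^b=a^ba$ (since in this branch $x_2x_3\neq x_3x_2$), so if it could occur the proposition would fail. It cannot occur --- $x_1x_2=x_2x_1$ together with $x_2=x_3^{x_1}$ gives $[x_1,x_3^{x_1}]=1$, hence $[x_1,x_3]=1$, contradicting the branch hypothesis --- but this exclusion is not ``routine cancellation'' and must be made explicit, as it is the only point where the hypothesis $x_1x_2=x_2x_1$ is used in an essential, non-bookkeeping way.
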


\begin{Proposition} [see \cite{FHLMPS}, Proposition 2]
\label{prop4}

Let $(G, \leq)$ be an ordered group, 
and let $x_1, x_2, x_3$ be three elements of $G$ such that $x_1 < x_2 < x_3$.
Let $S = \{x_1, x_2, x_3\}$
and assume that $x_1x_2 \not= x_2x_1$ and $x_2x_3 \not = x_3x_2$.
If $|S^2| = 7$, then one of the following statements holds:
\begin{itemize}
\item[(i)] either $S = \{x, xc, xc^x\}\ or\ S = \{x^{-1}, x^{-1}c, x^{-1}c^x\}$, with $c > 1$, 
$c \in G^{\prime}$ and $c^{x^2} = cc^x=c^xc$,
\item[(ii)] either $S = \{x, xc, xcc^x\}$ or $S = \{x^{-1}, x^{-1}c, x^{-1}cc^x\}$, 
with $c > 1$, 
$c \in G^{\prime}$ and $c^{x^2} = cc^x= c^xc$,
\item[(iii)]  $S = \{x, xc, xc^2\}$, with either $c^x = c^2$ or $(c^2)^x = c$.
\end{itemize}
\end{Proposition}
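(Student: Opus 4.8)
The plan is to study the $3\times3$ array of products $x_ix_j$ and to recover the structure of $S$ from the way its entries coincide. Two facts about an ordered group $(G,\le)$ will do the real work. First, roots are unique: if $u^n=v^n$ with $n\ge 1$ then $u=v$, since $u<v$ would force $u^n<v^n$. Second, every inner automorphism $g\mapsto g^h$ is order-preserving, because $a\le b$ implies $h^{-1}ah\le h^{-1}bh$; consequently a conjugation can neither interchange two distinct elements nor send a nontrivial element to its inverse. Because $\le$ is compatible with left and right multiplication, the array is increasing along each row and each column, so $x_1^2$ is its unique least entry and $x_3^2$ its unique greatest entry, and two entries can be equal only when their positions are incomparable in the product order on $\{1,2,3\}^2$. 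As $|S^2|=7$, exactly two coincidences occur among the remaining seven entries.

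First I would enumerate the incomparable position-pairs and discard the two that the hypotheses exclude, namely $x_1x_2=x_2x_1$ and $x_2x_3=x_3x_2$. A short check shows that every admissible coincidence-pair meets one of the two ``hub'' cells $x_1x_3$ or $x_3x_1$; hence the two coincidences either split as one identification through $x_1x_3$ and one through $x_3x_1$, or collapse into the single antichain $\{x_1x_3,\ x_2^2,\ x_3x_1\}$. This yields a short list of candidate patterns, which I would halve using the symmetry $S\mapsto S^{-1}$: inversion reverses $\le$ and acts on the array by reflection in the anti-diagonal, fixing $x_1x_3,x_2^2,x_3x_1$ and interchanging the two off-diagonal cell-pairs. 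This is precisely the symmetry that pairs the $S$ and $S^{-1}$ forms in (i) and (ii) and the $c^x=c^2$ and $(c^2)^x=c$ alternatives in (iii).

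I would then dispose of the patterns one at a time. The triple $x_1x_3=x_2^2=x_3x_1$ gives $[x_1,x_3]=1$ and $x_2^2=x_1x_3$, so $x_1$ commutes with $x_2^2$; since $(x_2^{x_1})^2=(x_2^2)^{x_1}=x_2^2$, uniqueness of roots forces $x_2^{x_1}=x_2$, contradicting $x_1x_2\ne x_2x_1$. The two ``crossed'' patterns $\{x_1x_3=x_2x_1,\ x_1x_2=x_3x_1\}$ and $\{x_1x_3=x_3x_2,\ x_2x_3=x_3x_1\}$ make a conjugation by $x_1$, respectively by $x_3$, interchange $x_2$ and $x_3$, respectively $x_1$ and $x_2$, which is impossible for an order-preserving map. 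For each of the six surviving patterns I would set $x:=x_1$, $c:=x^{-1}x_2$, solve the two equalities for $x_3$, and reduce them to a single relation among $c,c^x,c^{x^2}$. The order input then forces the crucial commutators to vanish: for instance, when the relation is $c^{x^2}=c^xc$, conjugation by $x$ sends $[c,c^x]$ to its own inverse, so $[c,c^x]=1$. After this, uniqueness of roots collapses the relation to the stated normal forms $c^{x^2}=cc^x=c^xc$, or $c^x=c^2$, or $(c^2)^x=c$; following $c$ through these identities gives $c\in G'$ in cases (i) and (ii), and matching each resulting set with the normalizations in the statement finishes the classification.

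The hard part will be this last step. The combinatorics supplies only two raw equations per pattern, and turning these into the full asserted structure, in particular the commutativity $[c,c^x]=1$, the membership $c\in G'$, and the passage between the $c^{x^2}=c^xc$ and $c^x=c^2$ normalizations, is exactly where the two order-theoretic facts must be applied with care. Keeping the conjugate bookkeeping straight, and verifying that the pattern enumeration is genuinely exhaustive, will be the most delicate part.
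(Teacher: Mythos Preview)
The paper does not prove this proposition at all: it is quoted verbatim from \cite{FHLMPS} (as Proposition~2 there) and used as a black box, so there is no ``paper's own proof'' to compare against. Your outline is therefore not a rederivation of anything in the present paper but an independent proof sketch of a result the authors import.

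On its own merits, your plan is sound and is in fact the natural approach. The enumeration is correct: the only admissible incomparable pairs among the nine products all meet $\{x_1x_3,x_3x_1\}$, the unique $3$-antichain is $\{(1,3),(2,2),(3,1)\}$, and the inversion symmetry $S\mapsto S^{-1}$ halves the casework exactly as you say. The two order-theoretic lemmas (unique roots, order-preserving conjugation) are precisely what is needed to kill the degenerate patterns and to force $[c,c^x]=1$ from relations like $c^{x^2}=c^xc$. One point to make explicit in the write-up: after eliminating the triple and the two ``crossed'' patterns, you should check that the remaining configurations are genuinely two \emph{disjoint} coincidence pairs (one through $x_1x_3$, one through $x_3x_1$), since a shared cell would collapse to a triple you have already discarded. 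Also, when you solve for $x_3$ from the two equalities in a given pattern, you obtain two expressions for $x_3$; equating them is where the relation among $c,c^x,c^{x^2}$ appears, and in some patterns this step also yields a relation that forces $c\in G'$ (e.g.\ $c=[x_2,x_1^{-1}]$ or similar), so be careful to extract that explicitly rather than asserting it at the end.
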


Next proposition considers the case when $|S|=4$.

\begin{Proposition} [see \cite{FHLMPS}, Lemma 6] 
\label{prop5}
Let $(G, \leq)$ be an ordered group 
and let $x_1, x_2, x_3, x_4$ be four elements of $G$ such that $x_1 < x_2 < x_3<x_4$.
Let $S = \{x_1, x_2, x_3,x_4\}$ and suppose that $|S^2| = 10$. If $x_2x_3 = x_3x_2$, then either 
$\langle x_2, x_3, x_4 \rangle$ or $\langle x_1, x_2, x_3 \rangle$ is abelian.
\end{Proposition}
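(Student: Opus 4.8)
The plan is to strip off the two extreme elements and reduce to two overlapping triples, then invoke the three-element classification. Throughout write $a=x_1<b=x_2<c=x_3<d=x_4$, so that the hypothesis reads $bc=cb$. If $\langle a,b,c\rangle$ is abelian there is nothing to prove, so assume it is not, and likewise assume $\langle b,c,d\rangle$ is non-abelian, seeking a contradiction.

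First I would pin down the doubling of the two triples $T=\{a,b,c\}=S\setminus\{\max S\}$ and $U=\{b,c,d\}=S\setminus\{\min S\}$. Applying Proposition~\ref{prop1} to $S$ gives that either $\langle S\rangle$ is $2$-generated abelian (whence $\langle a,b,c\rangle$ is abelian, excluded) or $|T^2|\le |S^2|-3=7$. Since $|T|=3$ and $\langle T\rangle$ is non-abelian, Theorem D rules out $|T^2|\le 3|T|-3=6$, so $|T^2|=7$. Applying the same proposition to $S^{-1}$ — which has the same cardinality, satisfies $|(S^{-1})^2|=|(S^2)^{-1}|=|S^2|=10$, and has maximum $x_1^{-1}$ — yields in the same way (the alternative that $\langle S^{-1}\rangle$ is abelian being again excluded) that $|U^2|=|(U^{-1})^2|=7$.

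Next I would classify $T$ and $U$ by Proposition~\ref{prop3}. In $T$ the top pair satisfies $x_2x_3=bc=cb=x_3x_2$, and in $U$ the bottom pair satisfies $bc=cb$; together with $|T^2|=|U^2|=7$ and the standing non-abelian assumptions, Proposition~\ref{prop3} applies to each. Hence each of $\langle a,b,c\rangle$ and $\langle b,c,d\rangle$ either has one of its three generators in its centre or has the shape $\{p,p^q,q\}$ with $pp^q=p^qp$. A central $a$ in $T$ would force $\langle a,b,c\rangle$ abelian (since $a$ would then commute with both $b$ and $c$, which commute), and symmetrically a central $d$ in $U$ is impossible; so whenever the central case occurs the central element is $b$ or $c$.

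Finally I would glue the two local descriptions along the common commuting pair $\{b,c\}$ and derive a contradiction with $|S^2|=10$. The basic increasing chain $a^2<ab<ac<ad<bd<cd<d^2$ already accounts for seven products, so $|S^2|=10$ permits exactly three further products among $ba,ca,da,db,dc,b^2,c^2,bc$. The point is that simultaneous non-commutation on the left (of $a$ against $b$ or $c$) and on the right (of $d$ against $b$ or $c$), as dictated by the two outcomes of Proposition~\ref{prop3}, produces a fourth new product, forcing $|S^2|\ge 11$. I expect this last step to be the main obstacle: one must show, in each of the finitely many combined configurations, that the extra product coming from the left-hand non-commutation is genuinely distinct both from the one coming from the right and from $b^2,c^2,bc$. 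This is precisely where the commutativity $bc=cb$ and the strict order $a<b<c<d$ are used to exclude the accidental coincidences of products that a general group would allow.
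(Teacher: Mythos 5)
This proposition is not proved in the present paper; it is quoted verbatim from \cite{FHLMPS} (Lemma 6), so there is no in-paper argument to compare yours against, and your attempt has to stand on its own. Its first two stages do: reducing to $|T^2|=|U^2|=7$ by applying Proposition~\ref{prop1} to $S$ and to $S^{-1}$ (using $|(S^{-1})^2|=|(S^2)^{-1}|$ and the reversal of the order under inversion), excluding the abelian alternative, invoking Theorem D to rule out $|T^2|\le 6$, and then feeding both triples into Proposition~\ref{prop3} via the shared commuting pair $\{x_2,x_3\}$ is all correct, as is the observation that a central $x_1$ (resp.\ $x_4$) would make $\langle x_1,x_2,x_3\rangle$ (resp.\ $\langle x_2,x_3,x_4\rangle$) abelian.

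The genuine gap is the final step, which you yourself flag as ``the main obstacle'': it is not carried out, and it is where the entire content of the lemma lives. Two concrete problems remain. First, the claim that $x_2^2$, $x_2x_3$, $x_3^2$ are automatically the ``three further products'' beyond the chain $x_1^2<x_1x_2<\dots<x_4^2$ is unproved: the order only forces $x_1x_2<x_2^2<x_2x_3<x_3^2<x_3x_4$, so these three elements may still coincide with $x_1x_3$, $x_1x_4$ or $x_2x_4$, and each such coincidence changes the count. Second, and more seriously, the asserted ``fourth new product'' can genuinely fail to be new in some configurations: for instance, if Proposition~\ref{prop3}(ii) holds for $T$ with $x_3=x_2^{x_1}$, then $x_2x_1=x_1x_2^{x_1}=x_1x_3$ lies in the chain, so the left-hand non-commutation contributes nothing new and the contradiction must come from elsewhere (e.g.\ from the specific relations $x_3=x_2^{x_1}$ together with orderability, not from counting alone). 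Until each of the finitely many combined configurations of $(T,U)$ is actually worked through and shown to force $|S^2|\ge 11$, the argument is a plan rather than a proof.
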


The final proposition which we shall need deals with the case of nilpotent groups of class $2$.

\begin{Proposition} [see \cite{FHLMS1}, Theorem 3.2] 
\label{prop6}
Let $G$ be an orderable 
nilpotent group of class $2$ and let $S$ be a finite subset of $G$ of size at least $4$,  
such that $\langle S \rangle $ is non-abelian. Then $|S^2|=3|S|-2$ if and only if $S=\{a,ac,\dots,ac^i,b,bc,\dots,bc^j\}$,
with 
$c>1$ and either $ab=bac$ or $ba=abc$. 
\end{Proposition}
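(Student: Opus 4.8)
The statement is an equivalence and I would treat the two implications separately. For sufficiency ($\Leftarrow$), which simultaneously yields the ``Moreover'' clause of the main theorem, I would argue by direct computation. Suppose $S=\{a,ac,\dots,ac^i,b,bc,\dots,bc^j\}$ with $[a,c]=[b,c]=1$, $c>1$, and say $ab=bac$, so that $c=[a,b]\in[G,G]\le Z(\langle S\rangle)$ is central. Writing $A$ and $B$ for the two progressions, one gets $A^2\subseteq a^2\langle c\rangle$, $B^2\subseteq b^2\langle c\rangle$, and $AB\cup BA\subseteq ab\langle c\rangle$ (using $ba=abc^{-1}$), each being a block of consecutive powers of the central element $c$ of sizes $2i+1$, $2j+1$, and $i+j+2$. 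Since $\langle S\rangle$ is non-abelian, $\bar a\neq\bar b$ in the abelianization, so the three cosets $a^2\langle c\rangle,\,b^2\langle c\rangle,\,ab\langle c\rangle$ are pairwise distinct and $|S^2|=(2i+1)+(2j+1)+(i+j+2)=3(i+j+2)-2=3|S|-2$.

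For necessity ($\Rightarrow$) I would assume $|S^2|=3|S|-2$ with $\langle S\rangle$ non-abelian of class $2$ and induct on $n=|S|\ge 4$. Put $T=S\setminus\{\max S\}$. As $\langle S\rangle$ is non-abelian, Proposition~\ref{prop1} gives $|T^2|\le|S^2|-3=3|T|-2$. If $\langle T\rangle$ is abelian I would invoke Proposition~\ref{prop2} with $y=\max S$: its cases (ii) and (iii) are incompatible with class $2$, since each imposes a relation (e.g. $c^y=c^2$ or $(c^2)^y=c$) whose commutator, being central, forces $c=1$; hence case (i) holds and $S=\{x,xc,\dots,xc^{k-2},y\}$ with $[c,x]=[c,y]=1$ and $[x,y]=c$ or $[y,x]=c$, exactly the asserted form with the second progression reduced to the point $y$.

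If instead $\langle T\rangle$ is non-abelian, the contrapositive of Theorem D gives $|T^2|\ge 3|T|-2$, so $|T^2|=3|T|-2$; for $n\ge 5$ the induction hypothesis then puts $T$ in the asserted form, and I would re-attach $x_n=\max S$. Exactly three products are gained, one being the unique maximum $x_n^2$, so among the remaining products $x_ns$ and $sx_n$ ($s\in S$) all but two must already lie in $T^2$. Passing to $\overline G=\langle S\rangle/[\langle S\rangle,\langle S\rangle]$, where the ratio $c=[a,b]^{\pm1}$ dies so that $\overline T=\{\bar a,\bar b\}$, a count of the fibres over the sumset $2\overline S$ shows that a third value $\bar x_n\notin\{\bar a,\bar b\}$ would create additional fibres and force $|S^2|>3|S|-2$; hence $x_n$ lies in $a\langle c\rangle$ or $b\langle c\rangle$, and being larger than $\max T$ while yielding only two new products it must be the immediate successor $ac^{i+1}$ or $bc^{j+1}$. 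I expect this extension step --- pinning $x_n$ onto one of the two progressions via fibre control --- to be the main obstacle.

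It remains to settle the base case $n=4$. Here $T=\{x_1<x_2<x_3\}$ is non-abelian with $|T^2|=7$, so Propositions~\ref{prop3} and \ref{prop4} enumerate the possible shapes of $T$; centrality of commutators discards every shape whose defining relation is not of the form $[x,y]=c^{\pm1}$, and adjoining $x_4=\max S$ under $|S^2|=10$ (with Proposition~\ref{prop5} locating a commuting adjacent pair) forces one of the profiles $(i,j)\in\{(2,0),(1,1),(0,2)\}$ of the asserted form. Combined with the inductive step, this proves necessity. Throughout, the decisive feature is that in an ordered class-$2$ group the centrality of all commutators reduces the ambient non-commutativity to the single parameter $c$, which is what makes both the fibre argument and the base-case bookkeeping tractable.
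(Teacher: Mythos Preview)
The paper does not actually prove Proposition~\ref{prop6}: it is quoted verbatim from \cite{FHLMS1} (Theorem~3.2) and used as a black box in the proof of Theorem~\ref{1ertheo}. So there is no ``paper's own proof'' to compare your attempt against.

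As for your sketch itself: the sufficiency computation is essentially correct, though you should justify that $\langle S\rangle/\langle c\rangle$ is torsion-free (needed for $\bar a^2\ne\bar b^2$); this follows because any $g$ with $g^n\in\langle c\rangle$ satisfies $[g,a]^n=[g,b]^n=1$, hence $g$ is central, hence $g\in\langle c\rangle$. For necessity, your inductive scheme via Propositions~\ref{prop1} and~\ref{prop2} is the natural one, and your elimination of cases (ii)--(iii) of Proposition~\ref{prop2} by centrality of commutators is sound. The two places you flag as sketchy are indeed where the work lies: the extension step (pinning $x_n$ to the correct coset $a\langle c\rangle$ or $b\langle c\rangle$ and then to the exact successor $ac^{i+1}$ or $bc^{j+1}$) requires a careful product count rather than just a fibre heuristic, and the base case $|S|=4$ with $\langle T\rangle$ non-abelian needs the shapes from Propositions~\ref{prop3}--\ref{prop4} to be filtered explicitly (note that Proposition~\ref{prop4}(i)--(ii) impose $c^{x^2}=cc^x$, which in class~$2$ forces $c=1$, so only the commuting-pair configurations of Proposition~\ref{prop3} survive). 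None of this looks wrong, but both steps are currently assertions rather than arguments.
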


\section{Subsets of the group $G = \langle a, b \ | \ a^{b^2} = aa^b,\ aa^b = a^ba \rangle$ }
\label{sec3}

In this section we shall prove the following theorem.

\begin{Thm}
\label{2emetheo}
Let 
$$
G = \langle a, b \ | \ a^{b^2} = aa^b,\ aa^b = a^ba \rangle.
$$ 

\begin{itemize}
\item[(i)] The group $G$ is orderable. 

\noindent  Let $\leq$ be a total order on $G$.

\item[(ii)] Let $S$ be a finite subset of $G$ satisfying $\langle S \rangle = G$ and $|S^2| = 3|S|-2.$
Then $|S| \leq 4.$
\item[(iii)] Moreover, in the case when $|S| = 4$, then either $S = \{x, xc, xc^x, xc^{x^2}\}$ or $S = \{x^{-1}, x^{-1}c, x^{-1}c^x, x^{-1}c^{x^2}\}$, 
with $c>1$  and $ c^{x^2} = cc^x = c^xc$.
\end{itemize}
\end{Thm}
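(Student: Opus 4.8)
\emph{Setting up the group.} The plan is to first identify $G$ concretely. Writing $A=\langle a, a^b\rangle$, the relation $a^{b^2}=aa^b$ shows that $A$ is invariant under conjugation by $b$, hence normal, while $aa^b=a^ba$ together with its $b$-conjugates forces $A$ to be abelian; thus $A$ is free abelian of rank $2$ on the basis $(a,a^b)$ and $b$ acts by the companion matrix $M=\left(\begin{smallmatrix}0&1\\1&1\end{smallmatrix}\right)$ of $\lambda^2-\lambda-1$, so that $G\cong\mathbb{Z}^2\rtimes_M\mathbb{Z}$. One checks that the explicit semidirect product satisfies both relations and admits a normal form $a^i(a^b)^jb^n$, so there is no further collapse. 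For part (i) I would exploit that $M$ has the positive real eigenvalue $\phi=(1+\sqrt5)/2$: the functional $\lambda(w)=w_1+\phi w_2$ on $\mathbb{Z}^2$ satisfies $\lambda(Mw)=\phi\,\lambda(w)$ and is injective since $\phi$ is irrational, so it defines an $M$-invariant total order on $A$. Declaring $wb^n>1$ precisely when $n>0$, or $n=0$ and $\lambda(w)>0$, then gives a conjugation-invariant positive cone, so $G$ is orderable.

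\emph{Bounding $|S|$ by induction.} For parts (ii)--(iii) I would induct on $|S|$, the statement being understood for every ordered group isomorphic to $G$. Let $y=\max S$ and $T=S\setminus\{y\}$. Since $G$ is non-abelian, hence not $2$-generated abelian, Proposition \ref{prop1} gives $|T^2|\le |S^2|-3=3|T|-2$. If $\langle T\rangle$ is abelian, Proposition \ref{prop2} applies to $S=T\,\dot\cup\,\{y\}$: its conclusion (i) would make $\langle S\rangle$ nilpotent of class $2$, impossible because $\mathbb{Z}^2\rtimes_M\mathbb{Z}$ is not nilpotent ($M$ is not unipotent), while conclusions (ii) and (iii) force $|S|=4$. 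If $\langle T\rangle$ is non-abelian, then $|T^2|=3|T|-2$ (Theorem D forbids $\le 3|T|-3$), and $\langle T\rangle$ is a non-abelian subgroup of $G$.

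\emph{Subgroup structure and the case $\langle T\rangle$ non-abelian.} The structural point here is that every non-abelian subgroup of $G$ is again of the same type. Such a subgroup has the form $(H\cap A)\rtimes\langle wb^m\rangle$ with $m\neq 0$ and $wb^m$ acting on the sublattice $H\cap A\le\mathbb{Z}^2$ by $M^m$; since $M^m$ is never unipotent there is no non-abelian nilpotent subgroup, and since $H\cap A$ is finitely generated free abelian it contains no section isomorphic to $\mathbb{Z}[1/2]$, hence no copy of $B(1,2)$. Applying Theorem F to $T$, options (ii), (iv), (v) are therefore excluded and (i) is abelian, so $\langle T\rangle$ is of type (iii), i.e. $\langle T\rangle\cong G$. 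The inductive hypothesis then yields $|T|\le 4$, so $|S|\le 5$. To rule out $|S|=5$, when $|T|=4$ the inductive hypothesis describes $T$ explicitly as $\{x,xc,xc^x,xc^{x^2}\}$ (or its inverse), a set lying in the single coset $xA$, and I would verify by direct computation that no larger element $y$ can be adjoined while keeping $|S^2|=13$ and $\langle S\rangle=G$. This gives $|S|\le 4$.

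\emph{Classifying $|S|=4$, and the main obstacle.} For $|S|=4$ write $S=\{x_1<x_2<x_3<x_4\}$ and examine the triple $\{x_1,x_2,x_3\}$. If it generates an abelian group, Proposition \ref{prop2} (or Proposition \ref{prop5} when $x_2x_3=x_3x_2$) leads, after discarding the nilpotent and Baumslag--Solitar outcomes as above, back into type (iii); if it is non-abelian then its square has size $7$ and Proposition \ref{prop4} applies, with its case (iii) ($c^x=c^2$) excluded because it would embed $B(1,2)$ in $G$. Thus the triple is $\{x,xc,xc^x\}$ (or the case-(ii) variant) with $c^{x^2}=cc^x=c^xc$, already generating $G$, and determining the admissible largest element forces $x_4=xc^{x^2}$, giving $S=\{x,xc,xc^x,xc^{x^2}\}$; the symmetric analysis from $\min S$ yields the inverse set. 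I expect the genuine obstacle to be exactly these cardinality computations in $\mathbb{Z}^2\rtimes_M\mathbb{Z}$ --- both the exclusion of $|S|=5$ and the pinning down of the fourth element --- because, once $S$ is placed inside cosets of $A$, evaluating $|S^2|$ reduces to controlling planar sumsets transported by the non-trivial action $M$, where the coupling of the two coordinates with the ordering must be handled with care.
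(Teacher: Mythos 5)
Your identification of $G$ as $\mathbb{Z}^2\rtimes_M\mathbb{Z}$ with $M$ the companion matrix of $\lambda^2-\lambda-1$, and your proof of orderability via the eigenvalue functional $\lambda(w)=w_1+\phi w_2$, are correct and genuinely different from the paper's route for part (i), which instead shows that $G$ is a metabelian $R^{\star}$-group and quotes the theorem that such groups are orderable; both arguments rest on the irrationality of $(1+\sqrt{5})/2$, but yours has the merit of producing an explicit order. For parts (ii) and (iii), however, there are real gaps. Your structural shortcut is misjustified: it is false that every non-abelian subgroup of $G$ is ``of the same type'' --- $G^{\prime}\rtimes\langle b^2\rangle$ is a non-abelian subgroup on which the stable letter acts by $M^2$, with trace $3$ rather than $\pm 1$, so it is not isomorphic to $G$ and your inductive hypothesis would not apply to it. The paper avoids this entirely with a counting argument: since $|T^2|=|S^2|-3$ and $x_{k-1}x_k,x_k^2\notin T^2$ because of the ordering, some $x_ix_k$ lies in $T^2$, hence $x_k\in\langle T\rangle$ and $\langle T\rangle=\langle S\rangle=G$ exactly, with no appeal to Theorem F. (Your route could be repaired, but only by proving that every non-abelian torsion-free quotient of $G$ is $G$ itself, which you do not address.) You also leave cases (ii) and (iii) of Proposition \ref{prop2} alive when the removed triple generates an abelian group; these must be killed by noting that $c^y=c^2$ is impossible in a torsion-free polycyclic group.

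More seriously, you defer precisely the steps that constitute the bulk of the paper's proof of (ii) and (iii). ``I would verify by direct computation that no larger element can be adjoined'' (for $|S|=5$) and ``determining the admissible largest element forces $x_4=xc^{x^2}$'' are not proofs; they are Lemmas \ref{lemma1}--\ref{lemma4} and Proposition \ref{prop11} of the paper, each a delicate elimination in which one writes $x_4=td$ (resp.\ $x_5=tc^l(c^t)^m$) with $d\in G^{\prime}$, lists the six cross-products with the remaining elements, and excludes every candidate using Proposition \ref{prop8} and the relation $c^{t^2}=cc^t$. In particular your sketch misses that Proposition \ref{prop4} yields two inequivalent candidate triples, $\{t,tc,tc^t\}$ and $\{t,tc,tc^{t^2}\}$ (plus their inverse variants), and that the second admits no fourth element at all (Lemmas \ref{lemma3} and \ref{lemma4}) --- an asymmetry invisible at the level of your outline. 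The skeleton matches the paper's, but the quantitative content of parts (ii) and (iii) is missing.
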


Throughout this section, we shall denote by $G$ the following group: 
$$
G =  \langle a, b \ | \ a^{b^2} = aa^b,\ aa^b = a^ba \rangle.
$$  
We begin with some remarks concerning $G$.

Define $H=(\langle u \rangle \times \langle v
\rangle)\rtimes\langle t \rangle$, where $t,u,v$ have infinite order
and $u^t=v$, $v^t=uv$. Then the defining relations of $G$ hold in $H$,
namely $[u,u^t]=1$ and $u^{t^2}=uu^t$ and by von Dyck's theorem (Theorem 2.2.1 in \cite{Rob81})
there is an epimorphism $\theta : G\to H$ with $a^{\theta}=u$ and $b^{\theta}=t$.
Since $\ker \theta =1$, it follows that 
$$
G = (\langle a \rangle \times \langle a^b \rangle)\rtimes\langle b \rangle, \text{ with }  a^{b^2} = aa^b.
$$ 
Thus $G ^{\prime} = \langle a \rangle \times \langle a^b \rangle$ and $G$ is a polycyclic metabelian group. 

We have $a^{b^2} = aa^b,\ a^{b^3} = a(a^b)^2$ and it is easy to see,
by induction on $n$, that
\begin{equation}
\label{star} 
a^{b^n} = a^{f_{n-1}}(a^b)^{f_n} \text{ for any } n \in \mathbb{N}, 
\end{equation}
where $(f_n)_{n \in \mathbb{N}_0}$ is the Fibonacci sequence  defined in the standard way  by induction by
$f_0 = 0, f_1 = 1$ and $ f_{n+1} = f_n+f_{n-1}$ for  $n>0.$ In particular, $f_2=1$ and 
$a^{b^2}=a^{b+1}$.

Furthermore, recall that a group is called an {\em $R^{\star}$-group} if
$$
g^{x_1}\dots g^{x_n}=e\ \text{implies}\ g=e\ \text{for all}\ n\in \mathbb{N}\ 
\text{and all}\ g,x_1,\dots,x_n\in G.
$$
Since metabelian $R^{\star}$-groups are orderable (see \cite{BR}, Theorem 4.2.2), in
order to prove that $G$ is orderable it suffices to show that it is an $R^{\star}$-group. This is what we do now.

\begin{Proposition} 
$G$ is an $R^\star$-group and hence it is orderable.
\end{Proposition}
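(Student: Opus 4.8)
The plan is to exploit the concrete metabelian structure already recorded. Write $A = G' = \langle a \rangle \times \langle a^b \rangle$ additively as the free abelian group $\mathbb{Z}^2$ with basis $a, a^b$. Since $a^b \mapsto (a^b)^b = a^{b^2} = aa^b$ and $a \mapsto a^b$, conjugation by $b$ acts on $A$ as the matrix $M = \left(\begin{smallmatrix} 0 & 1 \\ 1 & 1 \end{smallmatrix}\right)$, while $G/A \cong \langle b \rangle$ is infinite cyclic. First I would reduce to the case $g \in A$. Given a relation $g^{x_1} \cdots g^{x_n} = e$ with $n \geq 1$, projecting to the abelian quotient $G/A \cong \mathbb{Z}$ trivialises every conjugation and turns the relation into $n \bar g = 0$; since $\mathbb{Z}$ is torsion-free, this forces $\bar g = 0$, that is, $g \in A$.

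It remains to treat $g \in A$. Because $A$ is abelian and normal, every conjugate $g^{x_i}$ again lies in $A$ and depends only on the image $m_i \in \mathbb{Z} \cong G/A$ of $x_i$; concretely $g^{x_i} = M^{m_i} g$ in additive notation (the powers of $M$ all commute, so the left/right convention is immaterial). Hence the hypothesis collapses to the single linear equation
$$
\Bigl( \sum_{i=1}^{n} M^{m_i} \Bigr) g = 0 \qquad \text{in } \mathbb{Z}^2 .
$$
Thus $g = 0$ will follow once I show that the integer matrix $N := \sum_{i=1}^{n} M^{m_i}$ is non-singular for every choice of integers $m_1, \dots, m_n$ with $n \geq 1$.

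To establish non-singularity I would diagonalise $M$ over $\mathbb{R}$: its eigenvalues are the two real roots $\phi = (1+\sqrt 5)/2$ and $\psi = (1-\sqrt 5)/2$ of $x^2 - x - 1$, so the eigenvalues of $N$ are $\sum_i \phi^{m_i}$ and $\sum_i \psi^{m_i}$. The first is a sum of positive reals, hence strictly positive and nonzero. The second is the delicate point and I expect it to be the main obstacle. Suppose $\sum_i \psi^{m_i} = 0$; multiplying by $\psi^{-\min_j m_j}$ (legitimate since $\psi \neq 0$) yields $p(\psi) = 0$, where $p(x) = \sum_i x^{m_i - \min_j m_j}$ is a nonzero polynomial with non-negative integer coefficients summing to $n \geq 1$. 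As $x^2 - x - 1$ is the minimal polynomial of the irrational $\psi$ over $\mathbb{Q}$, it divides $p$, and consequently $p(\phi) = 0$ as well; but $p(\phi) > 0$ because $\phi > 0$ and $p$ has at least one term, a contradiction. Therefore both eigenvalues of $N$ are nonzero, $N$ is non-singular, and $g = 0$.

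This proves that $G$ is an $R^{\star}$-group, and since $G$ is metabelian it is orderable by Theorem 4.2.2 of \cite{BR}, as required.
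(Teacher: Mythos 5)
Your proof is correct. It follows the same overall skeleton as the paper's --- first force $g\in G'$ by passing to the infinite cyclic quotient $G/G'$, then analyse the resulting linear condition on $G'\cong\mathbb{Z}^2$ --- but the decisive non-degeneracy step is handled by a genuinely different argument. The paper conjugates so that all exponents are at least $2$, expands the sum of powers of $b$ explicitly via the Fibonacci identity $a^{b^n}=a^{f_{n-1}}(a^b)^{f_n}$ to obtain a linear system with \emph{positive} integer coefficients $c,d$, and reaches a contradiction because the rational number $d/c$ would have to satisfy $x^2-x-1=0$. You instead keep arbitrary integer exponents $m_i$, diagonalise $M$ over $\mathbb{R}$, and show that both eigenvalues $\sum_i\phi^{m_i}$ and $\sum_i\psi^{m_i}$ of $N=\sum_i M^{m_i}$ are nonzero --- the delicate one by noting that a nonzero polynomial with non-negative integer coefficients vanishing at $\psi$ would be divisible by the minimal polynomial $x^2-x-1$ and hence vanish at $\phi>0$, which is absurd. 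The two arguments are at bottom the same computation (in the paper's normalisation, $\det N=(c+d\phi)(c+d\psi)=c^2+cd-d^2$, exactly the quantity they show is nonzero), and both rest on the irrationality of the golden ratio; but your eigenvalue formulation dispenses with the normalisation of exponents and with the positivity of $c$ and $d$, at the price of a slightly more elaborate treatment of the $\psi$-eigenvalue, while the paper's version stays entirely within elementary integer manipulations.
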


\begin{proof} 
It suffices to show that, if $n\in \mathbb{N}$,  $k, u, v \in \mathbb{Z}$ and $g_i\in G$, then
$$
(b^ka^u(a^b)^v)^{g_1}\cdots (b^ka^u(a^b)^v)^{g_r} = 1
$$
implies that $ b^ka^u(a^b)^v = 1.$

First we notice that by working mod $G^{\prime}$ it follows that $k=0$.  
Hence we may assume that 
\begin{equation}
\label{starstar} 
(a^u(a^b)^v)^{b^{n_1}}(a^u(a^b)^v)^{b^{n_2}} \cdots (a^u(a^b)^v)^{b^{n_r}}= 1,
\end{equation}
where $n_i \geq 2$, applying  conjugation by a power of $b$, if necessary. 
Since by \eqref{star} 
$$
a^{b^{n_1}+b^{n_2}+\cdots+b^{n_r}} = a^{c+db}
$$
where $c, d $ are positive integers, relation \eqref{starstar} now becomes
$$
(a^u)^{c+db}(a^v)^{cb+db^2} = 1.
$$
Equivalently
$$
a^{uc+udb}a^{vcb+vd(b+1)} = 1,
$$
or 
$$
a^{(uc+vd)+(ud+vc+vd)b} = 1.
$$

Hence we have
$$
uc+vd =  0 \quad \text{ and } \quad ud+vc+vd =  0
$$

which implies that
$$ud^2+(-uc)c+d(-uc) = 0.$$
If $u = 0$, then $vd = 0$ and $v = 0$, so we are done. Hence we may assume 
that $u \not= 0$. It follows that 
$$
d^2-dc-c^2 = 0.
$$ 
Therefore $d/c$, which is rational, must satisfy 
$$
\frac{d}{c}= \frac{1+\sqrt{5}}{2},
$$ 

a contradiction.
\end{proof}

\begin{cor}
Part (i) of Theorem \ref{2emetheo} holds.
\end{cor}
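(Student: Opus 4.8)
The plan is essentially to observe that Part (i) of Theorem~\ref{2emetheo} asserts nothing more than that $G$ is orderable, and that this has already been delivered by the preceding Proposition. So the entire task reduces to checking that the Proposition's conclusion applies verbatim, with no further argument required.

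Concretely, I would proceed as follows. First, recall from the structural analysis carried out above that $G = (\langle a \rangle \times \langle a^b \rangle) \rtimes \langle b \rangle$ with abelian derived subgroup $G' = \langle a \rangle \times \langle a^b \rangle$, so that $G$ is metabelian. Next, invoke the Proposition, which establishes that $G$ is an $R^{\star}$-group. Finally, appeal to Theorem~4.2.2 of \cite{BR}, according to which every metabelian $R^{\star}$-group is orderable. Combining these three facts yields that $G$ is orderable, which is precisely the assertion of Part (i).

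At the level of the corollary there is no genuine obstacle: it is a one-line deduction from the Proposition. The real content --- and what I would regard as the only nontrivial step --- lives inside the proof of the Proposition itself, where one must rule out any nontrivial relation of the form $g^{x_1} \cdots g^{x_r} = 1$. There, after reducing modulo $G'$ to kill the exponent of $b$ and invoking the Fibonacci identity \eqref{star}, the putative relation collapses to the system $uc + vd = 0$ and $ud + vc + vd = 0$, whose only resolution forces the rational number $d/c$ to equal $(1+\sqrt{5})/2$ --- an impossibility. The corollary merely packages this verification as confirmation of Theorem~\ref{2emetheo}(i).
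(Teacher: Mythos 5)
Your proposal is correct and follows exactly the paper's route: the corollary is an immediate consequence of the preceding Proposition (that $G$ is an $R^{\star}$-group) together with the already-noted facts that $G$ is metabelian and that metabelian $R^{\star}$-groups are orderable by Theorem 4.2.2 of \cite{BR}. The paper gives no separate proof for the corollary for precisely this reason, and your recap of the Proposition's internal argument matches its proof as well.
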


Moreover, the following property holds in $G$.

\begin{Proposition} 
\label{prop8}
We have $C_{G^{\prime}}(g) = \{1\}$ for any $g \in G \setminus G^{\prime}$. 
In particular $Z(G) = \{1\}$.
\end{Proposition}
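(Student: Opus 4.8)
The plan is to exploit the concrete description of $G$ established just above, namely $G=(\langle a\rangle\times\langle a^b\rangle)\rtimes\langle b\rangle$ with $G^{\prime}=\langle a\rangle\times\langle a^b\rangle$ free abelian of rank $2$ on the basis $\{a,a^b\}$. Writing a typical element of $G^{\prime}$ as $a^u(a^b)^v$ and using \eqref{star}, conjugation by $b$ sends $a^u(a^b)^v\mapsto a^v(a^b)^{u+v}$, so on $G^{\prime}\cong\mathbb{Z}^2$ it is given by the integer matrix $M=\left(\begin{smallmatrix}0&1\\1&1\end{smallmatrix}\right)$. This is exactly the Fibonacci matrix whose characteristic polynomial $\lambda^2-\lambda-1$ already governed the $R^\star$-argument, so the present statement is really the same phenomenon viewed through centralisers.

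First I would reduce the centraliser computation to linear algebra. Since $G/G^{\prime}$ is infinite cyclic generated by the image of $b$, any $g\in G\setminus G^{\prime}$ can be written $g=b^kw$ with $w\in G^{\prime}$ and $k\neq 0$. As $G^{\prime}$ is abelian and normal, conjugation by $w$ fixes $G^{\prime}$ pointwise, so for $z\in G^{\prime}$ we have $z^g=z^{b^k}$. Hence, under the identification $G^{\prime}\cong\mathbb{Z}^2$, one gets $C_{G^{\prime}}(g)=\ker(M^k-I)$, and the whole problem collapses to showing that this kernel is trivial.

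The heart of the matter is therefore to prove $\ker(M^k-I)=\{1\}$ for every $k\neq 0$, and this is the step I expect to carry the real content. The eigenvalues of $M$ are the golden ratio $\varphi=(1+\sqrt5)/2$ and its conjugate $\psi=(1-\sqrt5)/2$, so the eigenvalues of $M^k$ are $\varphi^k$ and $\psi^k$. Since $\varphi>1$ and $0<|\psi|<1$, neither power can equal $1$ when $k\neq 0$; thus $1$ is not an eigenvalue of $M^k$, the matrix $M^k-I$ is invertible over $\mathbb{Q}$, and its kernel on $\mathbb{Z}^2$ is trivial. Equivalently one can compute $\det(M^k-I)=(-1)^k-L_k+1$, where $L_k$ is the $k$-th Lucas number, and observe that this never vanishes for $k\neq 0$ (for $k<0$ one reduces to $k>0$ using that $M$ is invertible). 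Either way, $C_{G^{\prime}}(g)=\{1\}$, and this irrationality of $\varphi$ is exactly the obstruction already met in the $R^\star$-proof.

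Finally I would deduce $Z(G)=\{1\}$ from the centraliser statement. If $z\in Z(G)\cap G^{\prime}$, then $z$ commutes with $b\in G\setminus G^{\prime}$, whence $z\in C_{G^{\prime}}(b)=\{1\}$. If instead $z\in Z(G)$ with $z\notin G^{\prime}$, then $z\in G\setminus G^{\prime}$ and $z$ commutes with $a$; but $a\in G^{\prime}\setminus\{1\}$, so $a\in C_{G^{\prime}}(z)=\{1\}$, a contradiction. Hence $Z(G)=\{1\}$, completing the proof.
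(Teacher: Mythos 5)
Your proposal is correct and follows essentially the same route as the paper: reduce to the action of $b^k$ on $G'\cong\mathbb{Z}^2$ via the Fibonacci matrix and show that $1$ is not an eigenvalue of its $k$-th power. The only cosmetic difference is that the paper verifies this by computing the (irrational) roots of the characteristic polynomial of $B^n$ in terms of Fibonacci numbers, while you argue via the magnitudes $\varphi>1>|\psi|>0$, which handles negative exponents a touch more transparently.
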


\begin{proof} 
Let $g \in G \setminus G^{\prime}$. Then $g = db^n$, where $d \in G^{\prime}$ 
and $n$ is an integer different from $0$. 
Since $d$ centralizes $G^{\prime}$, it suffices to show that $C_{G^{\prime}}(b^n) = \{1\}$,
where $n$ is a positive integer different from $0$.  

Denote the element $a^u(a^b)^v$ of  $G^{\prime}$ by $(u,v)$, where $u,v$ are integers.
Then $b$ acts on $(u,v)$ by conjugation via the following function:
$$
(u,v)^b=(0\cdot u+1\cdot v,1\cdot u+1\cdot v)=(u,v)B
$$
where 
$$
B=\begin{bmatrix} 
0 & 1\\
1 & 1
\end{bmatrix} .
$$
Thus, by induction, $b^n$ acts by conjugation on $(u,v)$ via multiplication by:
$$
B^n=\begin{bmatrix} f_{n-1} & f_n\\
f_n & f_{n+1}\end{bmatrix} ,
$$
where $(f_m)_{m \in \mathbb{N}_0}$ is the Fibonacci sequence.

If $b^n$ centralizes a non-trivial element of $G^{\prime}$, then $1$ would be
an eigenvalue of $B^n$. But the characteristic polynomial of $B^n$ is
$x^2-(f_{n-1}+f_{n+1})x+(f_{n-1}f_{n+1}-f_n^2)$, which has roots 
$(f_{n-1}+f_{n+1} \pm \sqrt{5}f_n)/2$, which are irrational for $n>0$.
Hence $C_{G^{\prime}}(b^n) = \{1\}$, as required. 
\end{proof}

Now, let $\leq$ be a total order in $G$ such that $(G, \leq)$ is an ordered group. 
Let $S = \{x_1, x_2, \dots, x_k\}$ be a subset of $G$ of size $k$ and suppose 
that $x_1 < x_2 < \cdots < x_k$. We wish to study the structure of $S$ if $|S^2| = 3|S|-2.$ 
We begin with the case $k = 3$.

\begin{Proposition} 
\label{prop9}
Let $S \subseteq G$, with $\langle S \rangle = G$. Suppose that $|S| = 3$. 
Then $|S^2| = 7$ if and only if one of the following holds:
\begin{itemize}
\item[(i)] $S = \{1, x, y\}$, with  $xy \not= yx$,
\item[(ii)] $S = \{c, c^t, t\}$, with  $cc^t = c^tc$,
\item[(iii)] either $S = \{t, tc, tc^t\}$ or $S = \{t, tc, tc^{t^2}\}$, 
where $c \in G^{\prime}$, 
$c > 1$ and  $c^{t^2} = cc^t = c^tc$,
\item[(iv)] either $S = \{t^{-1}, t^{-1}c, t^{-1}c^t\}$ or $S = \{t^{-1}, t^{-1}c, 
t^{-1}c^{t^2}\}$, 
where $c \in G^{\prime}$, $c > 1$ and $c^{t^2} = cc^t = c^tc$.
\end{itemize}
Furthermore, in any case,  either $S \cap G^{\prime} \subseteq \{1\}$ or $|S \cap G^{\prime} | = 2$.
\end{Proposition}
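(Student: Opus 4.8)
The plan is to reduce the statement to Propositions~\ref{prop3} and~\ref{prop4}, which already classify the three-element subsets $S=\{x_1,x_2,x_3\}$ (with $x_1<x_2<x_3$) of an \emph{arbitrary} ordered group that generate a non-abelian subgroup and satisfy $|S^2|=7$, and then to clean up their conclusions using the three special features of $G$ established above: the derived subgroup $G'=\langle a\rangle\times\langle a^b\rangle$ is free abelian of rank $2$; $Z(G)=\{1\}$ by Proposition~\ref{prop8}; and $b$, hence each $b^n$ with $n\neq 0$, acts on $G'$ through the Fibonacci matrix $B$ of Proposition~\ref{prop8}, whose eigenvalues $\phi^n,\psi^n$ (with $\phi=(1+\sqrt5)/2$, $\psi=(1-\sqrt5)/2$) are irrational. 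Since $\langle S\rangle=G$ is non-abelian, I would split the forward direction according to whether a consecutive pair commutes, which is exhaustive.

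If $x_1x_2=x_2x_1$ or $x_2x_3=x_3x_2$, I apply Proposition~\ref{prop3}. Its alternative (i), namely $S\cap Z(\langle S\rangle)\neq\emptyset$, becomes $1\in S$ precisely because $Z(G)=\{1\}$; writing $S=\{1,x,y\}$ and using that $\langle x,y\rangle=G$ is non-abelian forces $xy\neq yx$, which is case~(i). Its alternative (ii) is literally case~(ii) after renaming the generators $a,b$ to $c,t$.

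If neither consecutive pair commutes, I apply Proposition~\ref{prop4}, and the essential point — the main obstacle, and the only genuinely $G$-specific step — is to eliminate its case (iii), where $S=\{x,xc,xc^2\}$ with $c^x=c^2$ or $(c^2)^x=c$. Reducing modulo $G'$, where $G/G'\cong\mathbb{Z}$ is abelian, turns either relation into $\bar c=2\bar c$, so $\bar c=0$ and $c\in G'$. Writing $x\in G'b^n$ and using that $G'$ is abelian, the relation becomes $c^{b^n}=c^2$ (respectively $(c^2)^{b^n}=c$), i.e.\ the non-zero integer vector corresponding to $c$ is an eigenvector of $B^n$ for the eigenvalue $2$ (respectively $1/2$). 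Since $B^n$ has the irrational eigenvalues $\phi^n,\psi^n$ for $n\neq0$, while $n=0$ forces $x$ to centralise $c$ and hence $c=c^2=1$, this is impossible, so case (iii) does not occur. The surviving cases (i),(ii) of Proposition~\ref{prop4} then match cases (iii),(iv) here once I rewrite $cc^x=c^{x^2}$ via the defining relation $c^{x^2}=cc^x$: with $t=x$, the ``$x$'' forms give case~(iii) and the ``$x^{-1}$'' forms give case~(iv), the hypothesis $c^{x^2}=cc^x=c^xc$ becoming $c^{t^2}=cc^t=c^tc$.

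For the converse I would verify $|S^2|=7$ in each case by writing out the nine products and collapsing the forced coincidences: in (i) the products reduce to $1,x,y,x^2,xy,yx,y^2$; in (ii) to $c^2,cc^t,ct,(c^t)^2,c^tt,tc,t^2$ (using $tc^t=ct$ and $cc^t=c^tc$); and in (iii),(iv) the relations $ct=tc^t$ and $c^{t^2}=cc^t$ produce exactly seven surviving products, with no further coincidence since $\langle S\rangle$ is non-abelian and the $x_i$ are distinct. Finally, the dichotomy ``$S\cap G'\subseteq\{1\}$ or $|S\cap G'|=2$'' follows by examining cosets of $G'$: in (i), $x$ and $y$ cannot both lie in $G'$ (else $\langle x,y\rangle$ would be abelian), so $S\cap G'$ equals $\{1\}$ or has two elements; in (ii), $c$ and $c^t$ are conjugate and hence share a coset of $G'$ while $t\notin G'$, giving $|S\cap G'|\in\{0,2\}$; and in (iii),(iv) all three elements lie in the single non-trivial coset $t^{\pm1}G'$, so $S\cap G'=\emptyset$.
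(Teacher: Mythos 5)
Your proposal is correct and follows the same overall decomposition as the paper: split on whether a consecutive pair commutes, feed the two branches into Propositions~\ref{prop3} and~\ref{prop4}, use $Z(G)=\{1\}$ to simplify the first branch, and then eliminate case (iii) of Proposition~\ref{prop4}. The one place where you genuinely diverge is that elimination step. The paper rules out $c^x=c^2$ (and $(c^2)^x=c$) by observing that in a torsion-free polycyclic group such a relation would produce an infinite strictly ascending chain $\langle c\rangle\subset\langle c^{x^{-1}}\rangle\subset\langle c^{x^{-2}}\rangle\subset\cdots$, contradicting the maximal condition; you instead reduce mod $G'$ to get $c\in G'$ and then argue that $2$ (resp.\ $1/2$) would have to be an eigenvalue of the Fibonacci matrix $B^n$, whose eigenvalues $\phi^n,\psi^n$ are irrational for $n\neq 0$. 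Both are valid; the paper's argument is softer and works verbatim in any torsion-free polycyclic group (and is reused later for $BS(1,2)$-type relations), while yours recycles the explicit matrix computation already set up in Proposition~\ref{prop8} and so costs nothing extra here. One small gloss to tighten: in case (ii) of the intersection dichotomy you assert $t\notin G'$ without justification; the paper derives this by first showing $c\in G'$ (from $cc^t=c^tc$, $c^t=c[c,t]$ with $[c,t]\neq 1$, and $C_{G'}(g)=\{1\}$ for $g\notin G'$), whence $t\in G'$ would force $c^t=c$. Without that step your claim $|S\cap G'|\in\{0,2\}$ does not quite follow, since a priori $S\cap G'$ could be $\{t\}$.
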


\begin{proof}
Suppose that $S = \{x_1, x_2, x_3\}$ and $|S^2| = 7$. If either $x_1x_2 = x_2x_1$
or $x_2x_3 = x_3x_2$ holds, then either (i) or (ii) holds by Proposition \ref{prop3}, since $Z(G) = \{1\}$.  

If $x_1x_2 \not= x_2x_1$ and $x_2x_3 \not= x_3x_2$, then either (iii) or (iv) holds 
by Proposition \ref{prop4}, 
since the relation $c^x = c^2$ is impossible in a torsion-free polycyclic group (otherwise 
there would be an infinite chain of subgroups of $G$: 
$ \langle c \rangle \subset \langle c^{x^{-1}}\rangle  
\subset  \langle c^{x^{-2}}\rangle \subset\cdots \subset  
\langle c^{x^{-n}}\rangle \subset \cdots$).

In particular, if (i) holds, then either  $S \cap G^{\prime}=\{1\}$ or $S \cap G^{\prime}=\{1,z\}$
with $z\in \{x,y\}$, since $G^{\prime}$ is abelian and $G$ is non-abelian. 
In case (ii), $c^t=cd$ with 
$d\in G^{\prime}\setminus \{1\}$ and $cc^t = c^tc$ implies that $cd=dc$. Hence, by 
Proposition 8, $c\in S \cap G^{\prime}$ and since $c^t\neq c$, it follows that 
$t\notin  S\cap G^{\prime}$. Thus $S \cap G^{\prime}=\{c,c^t\}$. In cases (iii) and (iv)
$c,c^t,c^{t^2}\in G^{\prime}$, so $t\notin G^{\prime}$ since $G$ is non-abelian.
Hence $S \cap G^{\prime}=\emptyset $ in these cases. Thus in all cases
either $S \cap G^{\prime} \subseteq \{1\}$ or $|S \cap G^{\prime} | = 2$ holds.
\medskip

A direct calculation proves the converse of the main statement.
\end{proof}

Now we study the structure of $S$ if  $|S| = 4$ and $|S^2| = 10$. We begin with the 
following four lemmas.
 
\begin{Lemma} 
\label{lemma1}
Let $S = \{t, tc, tc^t, x_4\} \subseteq G$,  with $c^{t^2} = c^tc = cc^t$ and 
$t < tc < tc^t < x_4$. Then $|S^2| = 10$ if and only if $x_4 = tc^{t^2}$.
\end{Lemma}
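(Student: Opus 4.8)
The plan is to exploit the decomposition $G = G' \rtimes \langle b\rangle$ with $G' = \langle a\rangle\times\langle a^b\rangle$ free abelian of rank $2$, on which $b$ acts by the matrix $B$, and to use the abelianization map $\pi\colon G\to G/G'\cong\mathbb{Z}$. First I would locate $c$ and $t$. Applying $\pi$ to the relation $c^{t^2}=c\,c^t$ gives $\pi(c)=2\pi(c)$, so $c\in G'$; and if $\pi(t)=\tau$, then conjugation by $t$ on $G'$ is multiplication by $B^{\tau}$, so the relation says exactly that $c$, viewed in $\mathbb{Z}^2$, is annihilated by $B^{2\tau}-B^{\tau}-I$. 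Since $c\neq 1$ this matrix is singular, and because the eigenvalues of $B$ are the irrationals $\phi,\psi$ (roots of $x^2-x-1$), a short computation rules out every integer $\tau$ except $\tau=1$. Hence $t$ acts on $G'$ exactly as $b$ does, and $c^{t^n}$ is governed by the Fibonacci formula \eqref{star}.

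Next I would show $x_4=td$ with $d\in G'$. If $\pi(x_4)\neq 1$, then the four products $t\,x_4,\ tc\,x_4,\ tc^t\,x_4,\ x_4^2$ all have $\pi$-value different from $2$, hence lie outside $T^2$ (where $T=S\setminus\{x_4\}=\{t,tc,tc^t\}$), and they are pairwise distinct; since a direct computation gives $|T^2|=7$, this would force $|S^2|\geq 11$, contrary to hypothesis. So $\pi(x_4)=1$ and $d:=t^{-1}x_4\in G'$. Writing every product in the normal form $t^2\cdot(\text{element of }G')$, the seven elements of $T^2$ have $G'$-parts $1,\,c,\,c^t,\,cc^t,\,(c^t)^2,\,c^2c^t,\,c(c^t)^2$, while the seven products meeting $x_4$ have $G'$-parts $d,\ c^td,\ cc^td,\ d^t,\ cd^t,\ c^td^t,\ dd^t$ (with $d^t=dB$). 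The crucial point, using only that $G$ is ordered with $x_4=\max S$, is that $s_3x_4$, $x_4s_3$ and $x_4^2$ each exceed $\max T^2=s_3^2$ and so are new; moreover they are pairwise distinct, the only possible coincidence $s_3x_4=x_4s_3$ forcing $d=c^t$, i.e.\ $x_4=s_3$, which is impossible. As $|S^2|=10=7+3$, these three are the only new products.

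Consequently the remaining four products must lie in $T^2\cup\{s_3x_4,\,x_4s_3,\,x_4^2\}$. In particular $t\,x_4=t^2d$ lies in $T^2$, so $d$ is one of the seven $G'$-parts listed above; the values $1,c,c^t$ give $x_4\in\{t,tc,tc^t\}=S\setminus\{x_4\}$ and are excluded, leaving $d\in\{cc^t,(c^t)^2,c^2c^t,c(c^t)^2\}$. To finish I would test $s_2x_4=t^2(c^td)$: working in the basis $\{c,c^t\}$ of the rank-$2$ lattice $\langle c,c^t\rangle$ (which is free, since $c$ and $c^t$ are independent), a coordinate comparison shows that for the last three candidates this product lies neither in $T^2$ nor among $\{s_3x_4,x_4s_3,x_4^2\}$, producing a fourth new element and contradicting $|S^2|=10$. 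Only $d=cc^t=c^{t^2}$ survives, giving $x_4=tc^{t^2}$. The converse is the same computation run backwards: for $d=c^{t^2}$ one checks directly that $s_3x_4,x_4s_3,x_4^2$ are the only new products, so that $|S^2|=10$.

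I expect the main obstacle to be the control of the seven products involving $x_4$, namely proving cleanly that \emph{exactly} three of them are new. This rests on two different ingredients: the monotonicity of the order, which produces the three large new products $s_3x_4,\,x_4s_3,\,x_4^2$ and their distinctness, and the linear independence of $c$ and $c^t$, which lets one settle every coincidence question by comparing integer coordinate vectors. Once the problem is reduced to this finite coordinate check against the seven vectors of $T^2$, the remaining case analysis is routine.
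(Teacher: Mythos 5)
Your overall strategy is the same as the paper's: put every product into the normal form $t^2\cdot(\text{element of }G')$, use the ordering to show that $x_3x_4$, $x_4x_3$, $x_4^2$ are precisely the three products outside $T^2$, and then run a finite case analysis on $d=t^{-1}x_4$. Up to the point where you conclude that the remaining four mixed products lie in $T^2\cup\{x_3x_4,x_4x_3,x_4^2\}$, everything is correct: your preliminary reduction to $\tau=1$ is not actually needed but does no harm, your derivation of $x_4\in tG'$ via the abelianization is a valid variant of the paper's order argument, and your list of $G'$-parts agrees with the paper's.

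The gap is the very next assertion: ``in particular $t\,x_4=t^2d$ lies in $T^2$.'' This does not follow from what precedes it. The ordering gives $t\,x_4<x_3x_4$ and $t\,x_4<x_4^2$, but it gives no comparison between $t\,x_4$ and $x_4x_3$, since $t<x_4$ and $x_3<x_4$ pull in opposite directions; in a non-abelian ordered group such crossed products cannot be compared from the hypotheses alone. You must therefore also dispose of the case $t\,x_4=x_4x_3$, i.e.\ $d=d^tc^t$. It can be disposed of --- in coordinates $d(I-B)=c^t$ has the unique solution $d=(cc^t)^{-1}$, which is $<1<c^t$ and contradicts $x_4>tc^t$ --- but some argument is required and you give none. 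The paper sidesteps this entirely by testing $x_2x_4$ instead of $x_1x_4$: there the only possible coincidence is $x_2x_4=x_4x_3$, which gives $c^td=d^tc^t$, hence $d=d^t$ and $d=1$ by Proposition \ref{prop8}, an immediate contradiction; moreover, combined with $d>c^t$, this leaves only the two candidates $d\in\{c^2,cc^t\}$ rather than your four. Once the missing case is supplied, your remaining computation (checking that $x_2x_4$ becomes an eleventh element for each of $d=(c^t)^2$, $d=c^2c^t$, $d=c(c^t)^2$) is correct and completes the proof.
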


\begin{proof}
Suppose that $|S^2|= 10$.
Write $x_1 = t, x_2 = tc, x_3 =  tc^t$ and $ T = \{x_1, x_2 , x_3\}$. Then 
$$T^2 = \{t^2, t^2c, t^2c^t, t^2cc^t, t^2c^tc^t, t^2c^2c^t, t^2c(c^t)^2\}.$$
Notice that $c=(c^t)^{-1}(c^t)^t$, implying that $c$ and $c^t$ belong to $G^{\prime}$.
Moreover, $t\notin G^{\prime}$ since $c^t\neq c$, and $c>1$ since $t<tc$.

Obviously $x_3x_4, x_4^2 \notin T^2$ because of the ordering. Hence one of the elements 
$x_1x_4, x_2x_4$ belongs to $ T^2$, since $|T^2| = 7$ and $|S^2| = 10$. Thus $x_4 = td$
for some 
$d \in G^{\prime}$. If $x_3x_4 = x_4x_3$, then $t(d(c^t)^{-1})=((c^t)^{-1}d)t=(d(c^t)^{-1})t$
and $d=c^t$ by Proposition 8. But then $x_3=x_4$, a contradiction. Hence $x_3x_4\neq x_4x_3$
and
$$
S^2 = T^2 \dot\cup \{x_3x_4, x_4x_3, x_4^2\}.
$$
Notice that
$$
x_1x_4 = t^2d,\quad x_2x_4 = t^2c^td,\quad x_3x_4 = t^2cc^td, 
$$
$$
x_4x_1 = t^2d^t,\quad  x_4x_2 = t^2d^tc \quad \text{ and }\quad x_4x_3 = t^2d^tc^t.
$$
 
If $x_2x_4 \notin T^2$, then the only possibility is $x_2x_4 = x_4x_3$, thus $t^2c^td = t^2d^tc^t$ 
and $d^t = d$, a contradiction. Therefore $x_2x_4 \in T^2$ and the only possibilities are 
$d = c^2$ or $d = cc^t$, since $c^t < d$. If $d = c^2$, then $x_1x_4 = t^2c^2 \notin T^2$ 
and the only possibility is $x_1x_4 = x_4x_3$, yielding $d = d^tc^t$ and $c^2 = (c^t)^3$, 
a contradiction since $c < c^t$. Therefore the only remaining possibility is $d = cc^t$ 
and $x_4 = tc^{t^2}$, as required.

A direct calculation proves the converse.
\end{proof}

\begin{Lemma} 
\label{lemma2}
Let $S = \{t^{-1}, t^{-1}c, t^{-1}c^t, x_4\} \subseteq G$,  
with $c^{t^2} = c^tc = cc^t$ and $t^{-1} < t^{-1}c < t^{-1}c^t < x_4$. 
Then $|S^2| = 10$ if and only if $x_4 = t^{-1}c^{t^2}$.
\end{Lemma}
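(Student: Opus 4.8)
The plan is to deduce Lemma \ref{lemma2} from Lemma \ref{lemma1} by exploiting the inversion anti-automorphism of $G$, rather than recomputing $|S^2|$ from scratch. Write $\psi\colon G\to G$, $\psi(g)=g^{-1}$. This is an order-reversing anti-automorphism, so for the reversed order $\leq^{*}$ (defined by $g\leq^{*}h\iff h\leq g$) the pair $(G,\leq^{*})$ is again an ordered group and $\psi$ is an order-\emph{isomorphism} from $(G,\leq)$ onto $(G,\leq^{*})$. Moreover $\psi$ preserves doubling: since $(A^{-1})^{2}=(A^{2})^{-1}$ for any finite $A\subseteq G$, we have $|\psi(S)^{2}|=|S^{2}|$ for every $S$.

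First I would compute $\psi(S)$ for $S=\{t^{-1},t^{-1}c,t^{-1}c^{t},x_{4}\}$. Using the identity $(t^{-1}d)^{-1}=d^{-1}t=t\,(d^{-1})^{t}$ for $d\in G'$, one gets
$$
\psi(S)=\{t,\ t c'',\ t (c'')^{t},\ x_{4}^{-1}\}, \qquad c''=(c^{t})^{-1}=(c^{-1})^{t},
$$
since $(t^{-1}c)^{-1}=t(c^{t})^{-1}=tc''$ and $(t^{-1}c^{t})^{-1}=t(c^{t^{2}})^{-1}=t(c'')^{t}$. Crucially, $c''$ again lies in $G'$ and hence automatically satisfies $(c'')^{t^{2}}=c''(c'')^{t}=(c'')^{t}c''$, because every element of the abelian group $G'$ obeys $g^{t^{2}}=g\,g^{t}$ (the action of $t$ has characteristic polynomial $x^{2}-x-1$, i.e.\ $B^{2}=B+I$). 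Thus $\psi(S)$ has \emph{exactly} the shape required in Lemma \ref{lemma1}, with $t,c''$ in the roles of $t,c$. Since $\psi$ is an order-isomorphism onto $(G,\leq^{*})$, the chain $t^{-1}<t^{-1}c<t^{-1}c^{t}<x_{4}$ turns into $t<^{*}tc''<^{*}t(c'')^{t}<^{*}x_{4}^{-1}$, which is precisely the ordering hypothesis of Lemma \ref{lemma1} in the ordered group $(G,\leq^{*})$.

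Then I would apply Lemma \ref{lemma1} inside $(G,\leq^{*})$ to the set $\psi(S)$: it yields $|\psi(S)^{2}|=10$ if and only if $x_{4}^{-1}=t(c'')^{t^{2}}$. Since $|\psi(S)^{2}|=|S^{2}|$, the left-hand condition is just $|S^{2}|=10$. For the right-hand condition I would unwind the conjugations: $(c'')^{t^{2}}=((c^{t})^{-1})^{t^{2}}=(c^{t^{3}})^{-1}$, so $x_{4}^{-1}=t(c^{t^{3}})^{-1}$, whence $x_{4}=c^{t^{3}}t^{-1}=t^{-1}(c^{t^{3}})^{t^{-1}}=t^{-1}c^{t^{2}}$. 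This is exactly the desired equivalence.

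The one place the argument must be handled with care — and the main conceptual obstacle — is the order reversal: Lemma \ref{lemma1} has to be applied to $\leq^{*}$ rather than to the original $\leq$, which is legitimate because that lemma holds for \emph{every} total order making $G$ an ordered group (its proof uses only the ordered-group axioms together with the structure of $G$). The remaining potential difficulties are purely formal, namely the two conjugation identities $(t^{-1}d)^{-1}=t(d^{-1})^{t}$ and $(c'')^{t^{2}}=(c^{t^{3}})^{-1}$, both immediate from $G'$ being abelian and normal. If one preferred to avoid the symmetry entirely, the alternative is to mirror the proof of Lemma \ref{lemma1} verbatim: set $T=\{t^{-1},t^{-1}c,t^{-1}c^{t}\}$, verify $|T^{2}|=7$, use the ordering to force $x_{4}=t^{-1}d$ with $d\in G'$, and then eliminate every candidate for $d$ except $d=c^{t^{2}}$, invoking Proposition \ref{prop8} (so that $d^{t}\neq d$ whenever $d\neq1$) to discard the spurious coincidences among the products $x_{i}x_{4},x_{4}x_{i},x_{4}^{2}$; that casework is the heaviest step, and it is precisely the one the symmetry argument circumvents.
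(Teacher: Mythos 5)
Your proof is correct. It follows the same overall strategy as the paper — deduce Lemma \ref{lemma2} from Lemma \ref{lemma1} via a symmetry that preserves $|S^2|$ rather than redoing the casework — but the symmetry you use is different. The paper first argues (as in Lemma \ref{lemma1}) that $x_4=t^{-1}d$ with $d\in G'$, then replaces $S=t^{-1}\{1,c,c^t,d\}$ by $\bar S=t\{1,c,c^t,d\}$ and shows $|\bar S^2|=|S^2|$ by a chain of conjugations using that $G'$ is abelian; $\bar S$ satisfies the hypotheses of Lemma \ref{lemma1} with the \emph{same} order, so $d=cc^t=c^{t^2}$. You instead apply the inversion anti-automorphism, which buys you two things: you need no preliminary reduction of $x_4$ to the coset $t^{-1}G'$ (inversion handles an arbitrary fourth element), and the verification $|S^{-1}S^{-1}|=|(S^2)^{-1}|=|S^2|$ is immediate. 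The price is that you must apply Lemma \ref{lemma1} for the reversed order $\leq^{*}$, which you correctly justify by noting that the lemma is proved for an arbitrary compatible total order. One small remark: your parenthetical justification that $(c'')^{t^2}=c''(c'')^t$ "because every element of $G'$ obeys $g^{t^2}=gg^t$" tacitly assumes $t$ acts on $G'$ as $b$ does; this is in fact forced by the hypothesis $c^{t^2}=cc^t$ with $c\neq 1$, but it is cleaner (and avoids the issue) to derive the relation for $c''=(c^t)^{-1}$ directly by conjugating $c^{t^2}=cc^t$ by $t$ and inverting inside the abelian group $G'$. With that, the argument is complete and matches the paper's conclusion.
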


\begin{proof}
Suppose that $|S^2| = 10.$ 
As in the previous lemma, we may assume that  $x_4 = t^{-1}d$, where $d \in G^{\prime}$. Notice 
also that $c,c^t\in G^{\prime}$ and  $ G^{\prime}$ is abelian. Write 
$$\bar{S} = \{t, tc, tc^t, td\}.$$
Then 
\begin{eqnarray*}
|\bar{S^2}| & = & |t \{1, c, c^t, d\} t \{1, c, c^t, d\}| \\ 
		 & = & |t^2\{1, c, c^t, d\}^t \{1,c,c^t, d\}| \\
		 & = & |\{1, c, c^t, d\} \{1,c,c^t, d\}^{t^{-1}}| \\
		 & = & |\{1, c, c^t, d\}^{t^{-1}} \{1,c,c^t, d\}| \\
		& = & |t^{-2} \{1, c, c^t, d\}^{t^{-1}}\{1,c,c^t, d\}| \\
		& = & |S^2|.
\end{eqnarray*}
Therefore $|\bar{S}^2|=10$ and $d = cc^t=c^{t^2}$ by the previous lemma.

A direct calculation proves the converse.
\end{proof}

\begin{Lemma} 
\label{lemma3}
Let $S = \{t, tc, tc^{t^2}, x_4\} \subseteq G$,  with $c^{t^2} = c^tc = cc^t$ 
and $t < tc < tc^{t^2} < x_4$. Then $|S^2| > 10$.
\end{Lemma}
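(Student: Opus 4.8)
The plan is to assume $|S^2|\le 10$ and derive a contradiction. Order the set as $x_1=t<x_2=tc<x_3=tc^{t^2}<x_4$ and put $T=\{x_1,x_2,x_3\}$. First note $c^t\neq c$ (otherwise $c^{t^2}=cc^t=c^2$ would force $c=1$), so $\langle S\rangle$ is non-abelian, and $c=(c^t)^{-1}(c^t)^t\in G'$, whence $c,c^t\in G'$. Hence Proposition~\ref{prop1}, applied with $\max S=x_4$, gives $|T^2|\le|S^2|-3$, while a direct computation (cf. Proposition~\ref{prop9}(iii)) gives $|T^2|=7$; therefore $|S^2|\ge 10$ and it suffices to exclude $|S^2|=10$. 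Assuming equality, exactly three products lie in $S^2\setminus T^2$, and the counting-and-ordering argument used in the proof of Lemma~\ref{lemma1} applies unchanged (it uses only $T^2\subseteq t^2G'$, $|T^2|=7$ and $|S^2|=10$) to force $x_4=td$ for some $d\in G'$ with $d>c^{t^2}$. In particular $S=t\{1,c,c^{t^2},d\}$, so that $|S^2|=|\{1,c,c^{t^2},d\}^{\,t}\{1,c,c^{t^2},d\}|$, a product set lying in the abelian group $G'$.

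The decisive step is to pin down the order on the subgroup $M=\langle c,c^t\rangle$ exactly. Conjugation by $t$ acts on $M$, in the basis $\{c,c^t\}$, through the Fibonacci matrix $B=\left(\begin{smallmatrix}0&1\\1&1\end{smallmatrix}\right)$, whose eigenvalues $\varphi=(1+\sqrt5)/2$ and $\psi=(1-\sqrt5)/2$ are irrational; in particular $B$ has no rational invariant line, so $c^t$ is not a power of $c$ and $M\cong\mathbb{Z}^2$ with basis $\{c,c^t\}$. Now in any ordered group conjugation preserves positivity, so the positive cone of $M$ is $B$-invariant; being the positive half of a total order, it has the form $\{w:\ell(w)>0\}$ for a linear functional $\ell$, and invariance forces $\ell\circ B=\lambda\ell$ with $\lambda>0$. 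As $B$ is symmetric with eigenvalues $\varphi>0>\psi$, the only such $\ell$ up to scale is the $\varphi$-eigencovector, and since it has irrational slope $\ell$ alone totally orders $M$. Normalizing so that $c,c^t>1$ yields $\ell(c^u(c^t)^v)=u+\varphi v$; note $\ell(c^{t^2})=1+\varphi=\varphi^2$ and $\max_\ell T^2=2+3\varphi=\varphi^4$.

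With the order fixed I would pass to coordinates $(u,v)\leftrightarrow c^u(c^t)^v$, write $d=(p,q)$ with $\ell(p,q)>1+\varphi$, and use $(u,v)^t=(v,u+v)$ to list the seven products of $S^2$ that involve $x_4$. Using $\varphi^2=\varphi+1$ one checks that the three products $x_3x_4$, $x_4x_3$ and $x_4^2$ all have $\ell$-value exceeding $\max_\ell T^2=\varphi^4$, so they lie outside $T^2$, and their $\ell$-values are strictly increasing, so they are three distinct new elements. It then remains only to exhibit a fourth new product, and here lies the single real difficulty: the relative order of the remaining four products depends on $\ell(d)$, so for a few small values of $d$ a product may fall into $T^2$ or coincide with one of the three already found. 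I would resolve this uniformly by keeping two candidates, $x_4x_1$ and $x_4x_2$: a direct computation shows that $x_4x_1$ fails to be a fourth new element only for $d$ in a short explicit list, that $x_4x_2$ fails only for a second such list, and that these two lists are disjoint. Hence for every admissible $d$ at least one of $x_4x_1,x_4x_2$ supplies a fourth element outside $T^2$, giving $|S^2|\ge 11$ and the desired contradiction. Pinning the order in the previous step is exactly what turns the required comparisons into exact identities in $\varphi$ and shrinks the exceptional set to the finitely many $d$ handled by the two disjoint lists, so that this last case analysis is the only delicate point.
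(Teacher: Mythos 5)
Your argument is correct, and after the common reduction to $x_4=td$ with $d\in G^{\prime}$ and $d>cc^t$ it genuinely diverges from the paper's proof. The paper stays inside the group: it first shows that $x_2x_4$ must lie in $T^2$ (since the only alternative, $x_2x_4=x_4x_3$, gives $c^td=(c^td)^t$ and contradicts Proposition 8), which pins $d$ down to the two values $c^2c^t$ and $c^2(c^t)^2$, and then eliminates each by comparing one further product with $x_4x_3$ or $x_3x_4$. You instead determine the order on $\langle c,c^t\rangle$ completely --- the $B$-invariance of the positive cone, together with the irrationality of the eigenlines, does force the unique archimedean order $\ell(c^u(c^t)^v)=u+\varphi v$ --- and then run a finite check with the two witnesses $x_4x_1$ and $x_4x_2$. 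Your deferred computation does close as claimed: writing $d=c^p(c^t)^q$, the product $x_4x_1$ fails to be a fourth new element only for $d\in\{(c^t)^2,\ c(c^t)^2,\ c^2(c^t)^3\}$ (the first two landing in $T^2$, the last giving $x_4x_1=x_3x_4$), while $x_4x_2$ fails only for $d\in\{c^2c^t,\ c^2(c^t)^2\}$, and these lists are disjoint. What your route buys is the explicit identification of the unique compatible order on $G^{\prime}$, a fact the paper never states; it turns every comparison into an exact identity in $\varphi$ and would equally streamline Lemma 1 and Proposition 11. The cost is the extra input about positive cones of $\mathbb{Z}^2$ and a longer list of candidate $d$'s; the paper's proof is shorter because forcing $x_2x_4\in T^2$ collapses the analysis to two values of $d$ using only the crude bound $d>cc^t$.
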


\begin{proof}
Write $x_1 = t, x_2 = tc, x_3 =  tc^{t^2}$ and $T = \{x_1, x_2, x_3\}$. Then 
$$T^2 = \{t^2, t^2c, t^2c^t, t^2cc^t, t^2(c^t)^2c, t^2c^2(c^t)^2, t^2c^2(c^t)^3\}$$
and, as in Lemma 1, $c>1$, $c,c^t\in G^{\prime}$ and $t\notin G^{\prime}$.
Thus, by Proposition 8, $\langle S\rangle $ is non-abelian and by Theorem D
$|S^2|\geq 3|S|-2=10$. So it suffices to assume that $|S^2|=10$ and to reach a contradiction.
 
Obviously $x_3x_4, x_4^2 \notin T^2$ because of the ordering. Hence one of the elements 
$x_1x_4, x_2x_4$ belongs to $T^2$, since $|T^2| = 7$ and $|S^2| = 10$. It follows, 
as in Lemma \ref{lemma1}, that $x_4 = td$ for some 
$d \in G^{\prime}$ and $x_3x_4 \not= x_4x_3$. Thus
$$
S^2 = T^2 \dot\cup \{x_3x_4, x_4x_3, x_4^2\}
$$
and
$$
x_1x_4 = t^2d, \quad x_2x_4 = t^2c^td, \quad x_3x_4 = t^2(c^t)^2cd,
$$
$$ 
x_4x_1 = t^2d^t, \quad x_4x_2 = t^2d^tc  \quad \text{ and } \quad x_4x_3 = t^2d^tc^tc.
$$ 
If $x_2x_4 \notin T^2$, then the only possibility is $x_2x_4 = x_4x_3$. 
But then $tctd=tdtc^{t^2}$ and $c^td=(c^td)^t$, in contradiction to Proposition \ref{prop8}.
Thus $x_2x_4 \in T^2$, which implies that either $d = c^2c^t$ or $d = c^2(c^t)^2$,
since $c^t<d$.
 
In the first case $x_1x_4 = t^2c^2c^t \notin T^2$, thus $x_1x_4 = x_4x_3$ and $d = d^tc^tc$.
But $d=c^2c^t$, so $d^t=c$ and $c=d^t=(c^t)^2c^{t^2}=(c^t)^3c$ since $c^{t^2}=c^tc$, 
a contradiction. 

If, on the other hand, $d = c^2(c^t)^2$, then $x_4x_1 = t^2c^2(c^t)^4 \notin T^2$, 
so $x_4x_1 = x_3x_4$ and $d^t=c(c^t)^2d$. Thus
$c^2(c^t)^4 = c^3(c^t)^4$, again a contradiction. 

We have reached a contradiction in all possible cases and our Lemma is proved.
\end{proof}

\begin{Lemma} 
\label{lemma4}
Let $S = \{t^{-1}, t^{-1}c, t^{-1}c^{t^2}, x_4\} \subseteq G$,  
with $c^{t^2} = c^tc = cc^t$ and $t^{-1} < t^{-1}c < t^{-1}c^t < x_4$. Then $|S^2| > 10$.
\end{Lemma}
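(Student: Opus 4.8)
The plan is to reduce Lemma~\ref{lemma4} to Lemma~\ref{lemma3} by exactly the symmetry device that was used to pass from Lemma~\ref{lemma1} to Lemma~\ref{lemma2}. Since $c=(c^t)^{-1}(c^t)^t$ forces $c,c^t\in G^{\prime}$ while $t\notin G^{\prime}$ (because $c^t\neq c$), Proposition~\ref{prop8} shows that $\langle S\rangle$ is non-abelian, so Theorem~D already gives $|S^2|\geq 3|S|-2=10$. It therefore suffices to assume $|S^2|=10$ and derive a contradiction.

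First I would pin down $x_4$. Setting $T=\{t^{-1},t^{-1}c,t^{-1}c^{t^2}\}$, one has $|T^2|=7$ and $T^2\subseteq t^{-2}G^{\prime}$. Working modulo $G^{\prime}$, where $G/G^{\prime}$ is torsion-free abelian and $\overline{t}\neq 1$, the six products $x_ix_4,x_4x_i$ ($i\leq 3$) all lie in the single coset $\overline{t^{-1}x_4}\,G^{\prime}$ and $x_4^2$ lies in $\overline{x_4}^{\,2}G^{\prime}$; if $x_4\notin t^{-1}G^{\prime}$ these cosets are disjoint from $t^{-2}G^{\prime}$, producing far more than the three new elements permitted by $|S^2|=10$. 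This is the same ordering-and-coset counting used at the start of the proofs of Lemmas~\ref{lemma1} and~\ref{lemma3}, and it forces $x_4=t^{-1}d$ for some $d\in G^{\prime}$.

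Now set $A=\{1,c,c^{t^2},d\}\subseteq G^{\prime}$, so that $S=t^{-1}A$, and let $\bar S=tA=\{t,tc,tc^{t^2},td\}$. Using the identities $At^{-1}=t^{-1}A^{t^{-1}}$ and $At=tA^t$, the cardinality-invariance of conjugation, and the commutativity of $G^{\prime}$, one obtains the chain $|S^2|=|t^{-2}A^{t^{-1}}A|=|A^{t^{-1}}A|=|AA^{t^{-1}}|=|(AA^{t^{-1}})^t|=|A^tA|=|t^2A^tA|=|\bar S^2|$, which is precisely the displayed computation in the proof of Lemma~\ref{lemma2}, read for the present set $A$. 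Moreover, left multiplication by $t$ preserves the order, so the relations $t^{-1}<t^{-1}c<t^{-1}c^{t^2}<t^{-1}d$ transfer to $t<tc<tc^{t^2}<td$, and $\bar S$ satisfies every hypothesis of Lemma~\ref{lemma3}. That lemma yields $|\bar S^2|>10$, whence $|S^2|=|\bar S^2|>10$, contradicting our assumption and proving the claim.

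The only genuinely delicate point is the reduction identity $|S^2|=|\bar S^2|$: it rests on the observation that $S$ and $\bar S$ are the two left cosets $t^{-1}A$ and $tA$ of one common set $A\subseteq G^{\prime}$, and on $G^{\prime}$ being abelian, which is exactly what makes $A^{t^{-1}}A$, $AA^{t^{-1}}$ and hence $A^tA$ equinumerous. Everything else—the coset bookkeeping that gives $x_4=t^{-1}d$ and the transfer of the ordering under left multiplication by $t$—is routine.
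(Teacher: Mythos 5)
Your proof is correct and is essentially the argument the paper intends: the paper's proof of Lemma~\ref{lemma4} is the one-line instruction to argue as in Lemma~\ref{lemma2} using Lemma~\ref{lemma3}, and you have carried out exactly that reduction (writing $S=t^{-1}A$, $\bar S=tA$, establishing $|S^2|=|\bar S^2|$ by the conjugation chain, and transferring the ordering by left multiplication). Your only deviation is cosmetic: you pin down $x_4\in t^{-1}G^{\prime}$ by counting cosets of $G^{\prime}$, whereas the paper's template uses the ordering to force one of $x_1x_4,x_2x_4$ into $T^2$; both are valid.
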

 
\begin{proof}
We can argue as in Lemma \ref{lemma2}, using the result of Lemma \ref{lemma3}.
\end{proof}

Now we can prove part (iii) of Theorem \ref{2emetheo}.

\begin{proof}[Proof of Theorem \ref{2emetheo} (iii)]
Suppose that $|S^2| = 10$. 
Write, as usual, $S = \{x_1, x_2, x_3, x_4\}$, $x_1 < x_2 < x_3 < x_4$, $T = \{x_1, x_2, x_3\}$ 
and $V = \{x_2, x_3, x_4\}$. 

If $S = A \dot \cup \{y\}$ with $\langle A \rangle$ abelian, then 
by Proposition \ref{prop2},  either (i) of that proposition holds,
in contradiction to Proposition \ref{prop8}, or one of (ii) and (iii) holds, in which case either
$c^y=c^2$ or $(c^2)^y=c$, which as shown in the proof of Proposition \ref{prop9},
is impossible in a polycyclic torsion-free group.
Therefore we may assume that each triple of elements in $S$ generates a non-abelian group.
In particular, $\langle T \rangle$ and $\langle V \rangle$ are non-abelian.
By Proposition \ref{prop1} $|T^2|\leq 7$ and since $\langle T \rangle$ is non-abelian, Theorem D
implies that $|T^2|= 7$.

Therefore $|S^2|=|T^2|+3$ and $\langle T \rangle$ is non-abelian.  The elements 
$x_3x_4, x_4^2$ are not in $T^2$ 
because of the ordering, 
so one of the elements $x_1x_4, x_2x_4$ belongs to $T^2$. 
Hence $x_4 \in \langle T \rangle$ and $G = \langle T \rangle$. Arguing similarly, it follows
that $|V^2| = 7$, $\langle V \rangle$ is non-abelian 
and $G=\langle V \rangle$. Therefore $T$ and $V$ satisfy the hypotheses of Proposition \ref{prop9}.

If $1 \in T$, then $1 = x_1$ by Proposition \ref{prop5}, and $V$ satisfies either (ii) or (iii) or (iv) 
of Proposition \ref{prop9}. If $V$ satisfies (ii), then $|V \cap G^{\prime}|=2$ and 
$|S \cap G^{\prime}|=3$, a contradiction, since  $ G^{\prime}$ is abelian.
If $V$ satisfies either (iii) or (iv) of Proposition \ref{prop9}, then 
$S^2 = V^2 \dot \cup V \dot \cup \{1\}$ and $|S^2| = 11$, a contradiction. Therefore we may
assume  that $1 \notin T$ and $T$ satisfies either (ii) or (iii) or (iv) of Proposition \ref{prop9}.

If $T$ satisfies (ii) of Proposition \ref{prop9}, then $|T \cap G^{\prime}|=2$  and hence 
$|V\cap G^{\prime}|\geq 1$.
Thus it follows by Proposition \ref{prop9} that $|V\cap G^{\prime}|=2$. If $|T \cap G^{\prime}|=\{x_2,x_3\}$,
then $x_2x_3=x_3x_2$ and Proposition \ref{prop5} implies that either $\langle T \rangle$ or $\langle V \rangle$
is abelian, a contradiction. Hence $x_1\in T \cap G^{\prime}$ and $|S \cap G^{\prime}|=3$,
yielding again a contradiction. 
Consequently $T$ satisfies either (iii) or (iv) of Proposition \ref{prop9} and thus it is equal to 
one of the following sets: $\{t, tc, tc^t\}$,   $\{t^{-1}, t^{-1}c, t^{-1}c^t\}$,
$\{t, tc, tc^{t^2}\}$ and $\{t^{-1}, t^{-1}c, t^{-1}c^{t^2}\}$, where
$c \in G^{\prime}$,  $c > 1$ and $c^{t^2} = cc^t = c^tc$. It follows by Lemmas \ref{lemma1},  \ref{lemma2},  \ref{lemma3} 
and \ref{lemma4} that $T$ must be equal to one of the first two sets and $S$ is as required.

A direct calculation proves the converse.
\end{proof}

Finally, we study the case $S \subseteq G$, $|S| = 5$ in the next proposition.

\begin{Proposition}
\label{prop11}
Let $S \subseteq G$, with $\langle S \rangle = G$ and  suppose that $|S| = 5$. 
Then $|S^2| > 13 = 3|S|-2$ 
\end{Proposition}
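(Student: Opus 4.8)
The plan is to argue by contradiction. Since $\langle S\rangle = G$ is non-abelian, Theorem D already gives $|S^2|\ge 3|S|-2=13$, so it suffices to rule out the equality $|S^2|=13$; assume it from now on. Write $S=\{x_1,\dots,x_5\}$ with $x_1<\cdots<x_5$, and set $T=S\setminus\{x_5\}$ and $V=S\setminus\{x_1\}$. Applying Proposition \ref{prop1} to $S$ yields $|T^2|\le |S^2|-3=10$ (the alternative that $\langle S\rangle$ be $2$-generated abelian is excluded). To obtain the symmetric bound $|V^2|\le 10$ I would apply the same proposition to $S^{-1}$: inversion is order-reversing, so $\max S^{-1}=x_1^{-1}$ and $S^{-1}\setminus\{x_1^{-1}\}=V^{-1}$, while squaring commutes with inversion, whence $|V^2|=|(V^{-1})^2|\le |(S^{-1})^2|-3=10$.

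Next I would show that $\langle T\rangle$ and $\langle V\rangle$ are non-abelian. If, say, $\langle T\rangle$ were abelian, then $S=T\,\dot\cup\,\{x_5\}$ would fall under Proposition \ref{prop2}. As $\langle S\rangle=G$ is non-abelian and $|S|=5>4$, only case (i) of that proposition can occur, so $S=\{x,xc,xc^2,xc^3,y\}$ with $c=[x,y]^{\pm1}\in G'\setminus\{1\}$ and $[c,x]=[c,y]=1$. But Proposition \ref{prop8} gives $C_{G'}(g)=\{1\}$ for every $g\notin G'$; since $c\in G'\setminus\{1\}$ is centralized by both $x$ and $y$, this forces $x,y\in G'$, whence $S\subseteq G'$ and $\langle S\rangle$ is abelian, a contradiction. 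The same argument applies to $V$. Consequently both $\langle T\rangle$ and $\langle V\rangle$ are non-abelian, and Theorem D upgrades the bounds to the equalities $|T^2|=|V^2|=10$.

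The crucial step is then to prove $\langle T\rangle=\langle V\rangle=G$, so that the classification of Theorem \ref{2emetheo}(iii) becomes applicable. Consider the nine products involving $x_5$. Because the group is ordered, $x_5^2$, $x_4x_5$ and $x_5x_4$ all exceed $\max T^2=x_4^2$ and hence lie outside $T^2$; as $|S^2|-|T^2|=3$ and $x_5^2\ne x_4x_5,x_5x_4$, these already account for all the new products. Whether or not $x_4$ and $x_5$ commute, it follows that at least five of the six remaining products $x_ix_5,\,x_5x_i$ ($i\le 3$) lie in $T^2$; any such relation, say $x_ix_5=x_px_q$, gives $x_5=x_i^{-1}x_px_q\in\langle T\rangle$. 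Thus $x_5\in\langle T\rangle$ and $\langle T\rangle=\langle S\rangle=G$, and the mirror argument gives $\langle V\rangle=G$. I expect this generation step to be the main obstacle, and it is genuinely essential: proper non-abelian subgroups of $G$ need not share its Fibonacci structure, so Theorem \ref{2emetheo}(iii) cannot be invoked without knowing $\langle T\rangle=G$.

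Finally, applying Theorem \ref{2emetheo}(iii) to $T$ and to $V$ presents them in the Fibonacci forms $\{t,t\gamma,t\gamma^{t},t\gamma^{t^2}\}$ and $\{s,s\delta,s\delta^{s},s\delta^{s^2}\}$ (each possibly in the inverse variant), with $\gamma,\delta>1$ and $\gamma^{t^2}=\gamma\gamma^{t}$, $\delta^{s^2}=\delta\delta^{s}$. Using the three shared elements $x_2,x_3,x_4$, I would solve for $s$ and $\delta$ from $x_2,x_3$ and then evaluate $x_4$ in two ways. Recalling that $G'$ is abelian and using the identity $\gamma^{t^{-1}}=\gamma^{t}\gamma^{-1}$ (which follows from $\gamma^{t^2}=\gamma\gamma^{t}$), each of the four combinations of forms collapses to an equality forcing $\gamma^{k}=(\gamma^{t})^{k}$ for some $k\in\{1,2\}$, hence $\gamma=\gamma^{t}$ by torsion-freeness. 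This contradicts $C_{G'}(t)=\{1\}$ of Proposition \ref{prop8}, completing the proof. The concluding comparison is, by contrast, a mechanical computation.
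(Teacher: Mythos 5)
Your proof is correct, but its endgame is genuinely different from the paper's. The preliminary reductions coincide: both arguments assume $|S^2|=13$, use Proposition \ref{prop1} and Theorem D (after excluding, via Propositions \ref{prop2} and \ref{prop8}, the possibility that a $4$-element subset generates an abelian group) to get $|T^2|=10$ and $\langle T\rangle=G$, exactly as in the paper's phrase ``arguing as in the previous proposition.'' Where you diverge is in how the contradiction is extracted. The paper parametrizes only $T$ by Theorem \ref{2emetheo}(iii), writes out the ten elements of $T^2$ as $t^2c^{\alpha}(c^t)^{\beta}$, deduces $x_5=tc^l(c^t)^m$, and then runs a six-case analysis on which of the products $x_ix_5,\,x_5x_i$ must coincide with $x_4x_5$ or $x_5x_4$, each case yielding impossible values of $(l,m)$; the inverse variant is handled by the conjugation--inversion trick. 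You instead parametrize \emph{both} $T$ and $V=S\setminus\{x_1\}$ by Theorem \ref{2emetheo}(iii) and play the two Fibonacci parametrizations against each other on the overlap $\{x_2,x_3,x_4\}$: solving for $\delta=\gamma^{-1}\gamma^t$ and evaluating $x_4$ two ways does collapse, in all four sign combinations, to $\gamma^k=(\gamma^t)^k$ and hence to a contradiction with Proposition \ref{prop8}. This is a shorter and more conceptual finish --- it trades the enumeration of $T^2$ and the case analysis on $x_5$ for the rigidity of the $4$-element configuration --- at the cost of having to establish $|V^2|=10$ and $\langle V\rangle=G$ as well (your inversion argument for this is fine). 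Two small points you should make explicit: first, the generation step only needs that at least one of the three \emph{distinct} products $x_1x_5,x_2x_5,x_3x_5$ (all smaller than $x_4x_5$, hence distinct from $x_4x_5$ and $x_5^2$) lies in $T^2$, which holds since at most one of them can be the remaining new element of $S^2\setminus T^2$; your ``at least five of six'' count is looser than necessary but harmless. Second, matching $x_2,x_3,x_4$ to fixed positions in both parametrizations requires knowing that Theorem \ref{2emetheo}(iii) lists the elements in increasing order; this is true but not stated in the theorem --- it follows from $\gamma>1$ and $\gamma^{t^2}=\gamma\gamma^t$, which force $1<\gamma<\gamma^t<\gamma^{t^2}$ in an ordered group --- and should be recorded before the final computation.
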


\begin{proof} 
If $|S|=5$, then by Theorem D, $|S|\geq 13$. So it suffices to assume that $|S|=13$ and
to reach a contradiction.

Write $S = \{x_1, x_2, x_3, x_4, x_5\}$, $x_1 < x_2 < x_3 < x_4<x_5$, 
$T = \{x_1, x_2, x_3, x_4\}$ and suppose that $|S^2| = 13$. Arguing as in 
the previous proposition, 
we may conclude that $|T^2| = 10$, $|S^2|=|T^2|+3$ and $\langle T \rangle = G$. Hence $T$ satisfies 
the hypotheses of part (iii) of Theorem \ref{2emetheo}.  

Suppose first that $T = \{ t, tc, tc^t, tc^{t^2}\}$, with $c > 1$ and  $c^{t^2} = cc^t = c^tc.$ 
Then $x_1 = t$, $x_2 = tc$, $x_3 = tc^t$, $x_4 = tc^{t^2}$ and as shown in the proof of Lemma \ref{lemma1},
$c,c^t\in G^{\prime}$ and $t\notin G^{\prime}$. Moreover, 
$$
T^2 = \{t^2, t^2c, t^2c^t, t^2cc^t, t^2c^tc^t, t^2cc^tc^t, t^2c^2c^t, t^2c^2c^tc^t, t^2c(c^t)^3,
t^2c^2(c^t)^3\},
$$ 
and in particular
\begin{equation}
\label{star1} 
\text{ if } \quad t^2c^\alpha(c^t)^\beta \in  T^2,\quad \text{ then }\quad \alpha \in \{0, 1, 2\}, 
\beta \in \{0, 1, 2, 3\}.
\end{equation}

Since $x_4x_5\notin T^2$, at least one of the elements $x_1x_5, x_2x_5, x_3x_5$ belongs to 
$T^2$, implying that 
$$x_5 = tc^l(c^t)^m$$  for some integers
$l$ and $m$. 
We have : 

$$
x_3x_5 = t^2c^{l+1}(c^t)^{1+m},\quad x_5x_3 = t^2c^{m}(c^t)^{l+m+1},\quad x_2x_5 = t^2c^{l}(c^t)^{1+m},
$$  
$$
x_5x_2 = t^2c^{m+1}(c^t)^{l+m},\quad x_1x_5 = t^2c^{l}(c^t)^{m}  \quad \text{ and } \quad
x_5x_1 = t^2c^{m}(c^t)^{l+m}.
$$

If each of $x_3x_5, x_5x_3, x_2x_5, x_5x_2, x_1x_5, x_5x_1$ belongs to $T^2$, 
then $l, m \in \{0, 1\}$, since 
these elements involve $\{c^l,c^m,c^{l+1},c^{m+1}\}$ and each element of $T^2$
involves only one of $\{c^0,c,c^2\}$. Hence either $x_5 = t$, or $x_5 = tc$, 
or $x_5 = tc^t$, or $x_5 = tcc^t = tc^{t^2}$, a contradiction. Therefore there exists 
$ i \in \{1, 2, 3\}$ such that either $x_ix_5 \notin T^2$ or $x_5x_i \notin T^2$. 

Now $x_5x_4 = t^2c^{m+1}(c^t)^{l+m+1}$ and $x_4x_5 = t^2c^{l+1}(c^t)^{m+2}$. 
Thus $x_5x_4=x_5x_4$ is impossible, since it implies that $l=m=1$ and $x_5=tcc^t=tc^{t^2}$, 
a contradiction.
Hence $$S^2 = T^2 \dot \cup \{x_4x_5, x_5x_4,x_5^2\}.$$
If $x_ix_5 \notin T^2$, then the only possibility is $x_ix_5 = x_5x_4$ and if 
$x_5x_i \notin T^2$, then the only possibility is $x_5x_i = x_4x_5$. 

If $x_3x_5 = x_5x_4$, then $l = m = 0$, a contradiction. If $x_2x_5 = x_5x_4$, 
then $l = 0, m = -1$, 
so $x_5 = t(c^t)^{-1} < t = x_1$, also impossible. If $x_1x_5 = x_5x_4$, 
then $l = -1, m = -2$, 
so $x_5 = tc^{-1}(c^t)^{-2} < t = x_1$, a contradiction. If $x_5x_1 = x_4x_5$, 
then $l = 2, m = 3$, 
so $x_5 = tc^2(c^t)^3$ and $x_2x_5 = t^2c^2(c^t)^4 \notin T^2$, leading us to a previous case. 
If $x_5x_2 = x_4x_5$, then $l = 2, m = 2$, so $x_5 = tc^2(c^t)^2$ and 
$x_3x_5 = t^2c^3(c^t)^3 \notin T^2$, leading us again to a previous case. Finally, 
if $x_5x_3 = x_4x_5$, then $l = 1, m = 2$, so $x_5 = tc(c^t)^2$ and
$x_5x_2 = t^2c^3(c^t)^3 \notin T^2$, and again we are in a previous case.

We have reached a contradiction in all cases. Hence if
$T = \{ t, tc, tc^t, tc^{t^2}\}$, with $c > 1$ and  $c^{t^2} = cc^t = c^tc$,
then $|S^2|>13$.
 
Now suppose that $T =  \{t^{-1}, t^{-1}c, t^{-1}c^t, t^{-1}c^{t^2}\}$ 
with $c > 1$ and $c^{t^2} = cc^t = c^tc.$ 

Arguing as before we may write $x_5 = t^{-1}c^l(c^t)^m$, for some integers $l, m$. 
Write 
$$
\bar{S} = \{t, tc, tc^t, tc^{t^2}, tc^l(c^t)^m\}.
$$ 
Then 
\begin{eqnarray*}
|\bar{S^2}| & = & |t \{1, c, c^t, c^{t^2}, c^l(c^t)^m\} t \{1, c, c^t, c^{t^2},  c^l(c^t)^m\}| \\
&=& |t^2\{1, c, c^t, c^{t^2},  c^l(c^t)^m\}^t \{1,c,c^t, c^{t^2},  c^l(c^t)^m\}| \\
&=& |\{1, c, c^t, c^{t^2},  c^l(c^t)^m\} \{1,c,c^t, c^{t^2},  c^l(c^t)^m\}^{t^{-1}}| \\
&=& |\{1, c, c^t, c^{t^2},  c^l(c^t)^m\}^{t^{-1}} \{1,c,c^t, c^{t^2},  c^l(c^t)^m \}| \\
&=& |t^{-2} \{1, c, c^t, c^{t^2},  c^l(c^t)^m\}^{t^{-1}}\{1,c,c^t, c^{t^2},  c^l(c^t)^m\}| \\
&=& |S^2|.
\end{eqnarray*}

But $|\bar{S}^2| > 13$ by the previous paragraph, so $|S^2| > 13$, as required.
\end{proof}
  
Now we can conclude the proof of Theorem \ref{2emetheo}.

\begin{proof}[Proof of Theorem \ref{2emetheo}, part (ii)]
Suppose that $|S| = k \geq 5$ and  write 
$$
S = \{x_1, x_2, \dots, x_{k-1}, x_k\}\quad \text{ and }\quad T = \{x_1, x_2, \dots, x_{k-1} \}.
$$  
Then, by Proposition \ref{prop1}, $|T^2| \leq |S^2|-3 = 3|T|-2$. If $|T^2| \leq 3|T|-3$, 
then, by Theorem D, $\langle T \rangle$ is abelian and Proposition \ref{prop2} yields a contradiction,
since case (i) of Proposition \ref{prop2} is impossible as $Z(G)=\{1\}$
by Proposition \ref{prop8} and as shown in the proof of Proposition 9,
cases (ii) and (iii) of Proposition \ref{prop2} are impossible
in a polycyclic torsion-free group.

Thus $|T^2| = 3|T|-2=|S^2|-3$ and since $k-1\geq 4$, at least one of 
the $k-1$ elements $x_1x_k, x_2x_k, \dots, x_{k-1}x_k$ belongs to $ T^2$. Hence 
$x_k \in \langle T \rangle$ and $\langle T \rangle = \langle S \rangle =G$. Thus, 
by induction, we may assume that $|S|= 5$, in which case we get the 
contradiction $|S^2| > 13$ by Proposition \ref{prop11}. Therefore $|S| \leq 4$ and 
the structure of $S$ if $|S| = 4$ follows from part (iii) of Theorem \ref{2emetheo}.
\end{proof}

\section{Subsets of the Baumslag-Solitar group $BS(1,2)$} 
\label{sec4}

Our aim in this section is to prove the following theorem.

\begin{Thm}
\label{3emetheo}
Let $G = \langle a, b \ | \ a^{b} = a^2 \rangle$ and let  
$S$ be a generating subset of $G$. 
Then the following statements hold:
\begin{itemize}
\item[(i)] If $|S| > 4$, then $|S^2| = 3|S|-2$ if and only if 
$S = \{x, xc, \dots, xc^{k-1}\}$, where $c>1$ and either $c^x = c^2$ or $(c^2)^x = c$.
\item[(ii)] If $|S| = 4$, then $|S^2| = 3|S|-2$ if and only if either 
$ S = \{1, c, c^2, x\}$, where either $c^{x} = c^2$ or  $(c^2)^x = c$,
or $ S = \{x, xc, xc^2, xc^3\}$,
where $c>1$ and either $c^{x} = c^2$  or $(c^2)^x = c.$
\end{itemize}
\end{Thm}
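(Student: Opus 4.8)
The plan is to follow the strategy of Section~\ref{sec3}, replacing the Fibonacci action used there by the doubling action of $BS(1,2)$. I would first record the structure of $G=\langle a,b\mid a^{b}=a^{2}\rangle$, available from \cite{FHLMS3}: one has $G=G'\rtimes\langle b\rangle$ with $G'\cong\mathbb{Z}[1/2]$ abelian and torsion-free, $b$ acting on $G'$ as multiplication by $2$; moreover $G$ is orderable, $Z(G)=\{1\}$, and $C_{G'}(g)=\{1\}$ for every $g\in G\setminus G'$. The relation $c^{x}=c^{2}$ just says that $x$ acts on $\langle c\rangle$ as doubling, and $(c^{2})^{x}=c$ is the same statement for $x^{-1}$. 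A key preliminary computation is that $c^{x^{2}}=cc^{x}$ with $c\neq 1$ is \emph{impossible} in $G$: conjugation by $x$ scales $G'$ by some power $2^{n}$, and $2^{2n}=1+2^{n}$ has no integer solution (including $n=0$). This single observation discards, inside $BS(1,2)$, the generic ``$c^{x^{2}}=cc^{x}$'' alternatives that appear in Propositions~\ref{prop3} and~\ref{prop4}.

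For the two converse implications I would compute directly. If $S=\{x,xc,\dots,xc^{k-1}\}$ with $c^{x}=c^{2}$ and $c>1$, then $c^{i}x=xc^{2i}$ gives $(xc^{i})(xc^{j})=x^{2}c^{2i+j}$, so $S^{2}=\{x^{2}c^{m}\}$ with $m$ ranging over $\{2i+j:0\le i,j\le k-1\}=\{0,1,\dots,3k-3\}$, whence $|S^{2}|=3k-2$. If $S=\{1,c,c^{2},x\}$ with $c^{x}=c^{2}$, the sixteen products collapse to the ten distinct elements $1,c,c^{2},c^{3},c^{4},x,xc,xc^{2},xc^{4},x^{2}$, so $|S^{2}|=10=3|S|-2$. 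The variants with $(c^{2})^{x}=c$ follow by symmetry.

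For the forward implications, order $S=\{x_{1}<\cdots<x_{k}\}$ and set $T=S\setminus\{x_{k}\}$. Since $\langle S\rangle=G$ is non-abelian, Proposition~\ref{prop1} gives $|T^{2}|\le|S^{2}|-3=3|T|-2$. If $|T^{2}|\le 3|T|-3$ then Theorem~D makes $\langle T\rangle$ abelian, and Proposition~\ref{prop2} applies to $S=T\,\dot\cup\,\{x_{k}\}$: its case (i) would force $\langle S\rangle$ to be nilpotent of class $2$, contradicting $\langle S\rangle=BS(1,2)$, while cases (ii)--(iii) require $|S|=4$. Hence for $|S|>4$ we must have $|T^{2}|=3|T|-2$; since $x_{k-1}x_{k}$ and $x_{k}^{2}$ lie above $T^{2}$ in the order, at least one of the products $x_{i}x_{k}$ or $x_{k}x_{j}$ falls into $T^{2}\subseteq\langle T\rangle$, so $x_{k}\in\langle T\rangle$ and $\langle T\rangle=G$. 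This sets up an induction on $|S|$ with base case $|S|=4$.

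The base case $|S|=4$, which is part (ii), carries the real weight. If some $3$-subset $A$ generates an abelian group, Proposition~\ref{prop2} applies to $S=A\,\dot\cup\,\{y\}$; case (i) is excluded as above and cases (ii)--(iii) give $S=\{x,xc,xc^{2},y\}$ with $[c,x]=1$. Here $c\in G'\setminus\{1\}$, so $C_{G'}(g)=\{1\}$ forces $x\in G'$; combined with $Z(G)=\{1\}$ and a short computation this reshapes $S$ into $\{1,c,c^{2},x\}$ with $c^{x}=c^{2}$ or $(c^{2})^{x}=c$. If instead no $3$-subset is abelian, then Proposition~\ref{prop5} gives $x_{2}x_{3}\neq x_{3}x_{2}$, the triples $\{x_{1},x_{2},x_{3}\}$ and $\{x_{2},x_{3},x_{4}\}$ both have square of size $7$ and generate $G$, and Propositions~\ref{prop3}--\ref{prop4} --- with the $c^{x^{2}}=cc^{x}$ forms discarded by the preliminary computation --- force each of them to be of type $\{x,xc,xc^{2}\}$ with $c^{x}=c^{2}$; ordering-and-commutator lemmas modelled on Lemmas~\ref{lemma1}--\ref{lemma4} then pin $S$ down to $\{x,xc,xc^{2},xc^{3}\}$. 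Finally, for the inductive step of part (i) with $T$ a progression, I would run the explicit exponent bookkeeping of Proposition~\ref{prop11}, now with the doubling action in place of the Fibonacci action, to show that the top element $x_{k}$ can only be the next term $xc^{k-1}$. \textbf{The main obstacle} is exactly this four-element analysis, together with the one remaining point in the induction: one must check that the non-progression form $\{1,c,c^{2},x\}$ cannot arise as $T$ when $|S|=5$ --- that is, that it admits no small-doubling extension --- so that only genuine geometric progressions survive for $|S|>4$. This is a finite but delicate case analysis on the position of the adjoined element relative to the powers of $c$.
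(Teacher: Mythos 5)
Your proposal follows essentially the same route as the paper's: the same structural facts about $BS(1,2)$, the same exclusion of the relation $c^{x^2}=cc^x$ by the arithmetic of the doubling action, the same induction via $T=S\setminus\{\max S\}$ using Proposition \ref{prop1}, Theorem D and Proposition \ref{prop2}, the same split of the base case $|S|=4$ according to whether some triple generates an abelian subgroup, the same need to kill the exceptional form $\{1,c,c^2,y\}$ at the step $|S|=5$, and the same direct computations for the converses.

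One step, however, is overstated and hides a genuinely necessary argument. In the four-element case with no abelian triple, you assert that Propositions \ref{prop3}--\ref{prop4}, once the $c^{x^2}=cc^x$ alternatives are discarded, force the triples $\{x_1,x_2,x_3\}$ and $\{x_2,x_3,x_4\}$ to be of the form $\{x,xc,xc^2\}$ with $c^x=c^2$ or $(c^2)^x=c$. They do not. Proposition \ref{prop5} only rules out $x_2x_3=x_3x_2$; if $x_1x_2=x_2x_1$ (or $x_3x_4=x_4x_3$ for the upper triple), Proposition \ref{prop3} still applies and leaves the possibilities $1\in T$ and $T=\{c,c^t,t\}$ with $cc^t=c^tc$, and both of these generate non-abelian subgroups, so neither is excluded by your standing hypothesis that no triple is abelian. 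The paper disposes of them by two separate arguments (see Propositions \ref{prop13} and \ref{prop15}): if $1\in S$ then $1=x_1$ by Proposition \ref{prop5} and $S^2=V^2\,\dot\cup\,V\,\dot\cup\,\{1\}$ has $7+3+1=11$ elements, a contradiction; if $T=\{c,c^t,t\}$ then $|T\cap G'|=2$, the parallel count for $V$ together with Proposition \ref{prop5} forces $|S\cap G'|=3$, and three commuting elements of $S$ would yield an abelian triple. Neither exclusion is subsumed by the ``ordering-and-commutator lemmas modelled on Lemmas \ref{lemma1}--\ref{lemma4}'' that you defer to, since those lemmas start from a triple already known to be a progression; so this is a missing piece of the case analysis rather than mere bookkeeping. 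The rest of the plan, including the exponent bookkeeping for the inductive step and the elimination of $\{1,c,c^2,y\}$ as a possible $T$ when $|S|=5$, matches what the paper actually carries out.
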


Throughout this section we shall denote by $G$ the Baumslag-Solitar group 
$$
G = BS(1,2) =  \langle a, b \ | \ a^{b} = a^2\rangle .
$$  
We begin with some basic well-known results concerning $G$.

\begin{Proposition} 
\label{prop12}
The following statements hold:
\begin{itemize}
\item[(i)] $G^{\prime} = \langle  a^{b^n} \ | \  n \in \mathbb{Z}\rangle$ is abelian,
\item[(ii)] $G=G^{\prime} \rtimes \langle b\rangle$,
\item[(iii)] $G$ is orderable,
\item[(iv)] If $c\in G^{\prime}$, then $c^b=c^2$,
\item[(v)] $C_{G^{\prime}}(g) = \{1\}$ for any $g \in G \setminus G^{\prime}$. 
In particular $Z(G) = \{1\}$.
\end{itemize}
\end{Proposition}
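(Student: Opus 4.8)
The plan is to reduce everything to the single commutator identity $[a,b]=a^{-1}a^b=a^{-1}a^2=a$ together with one epimorphism onto a concrete affine model that prevents the presentation from collapsing. From $[a,b]=a$ we get $a\in G'$, and since $G/\langle\langle a\rangle\rangle$ is generated by the image of $b$ alone it is cyclic; hence $G'=\langle\langle a\rangle\rangle$, the normal closure of $a$. Setting $a_n=a^{b^{-n}}$ and conjugating the defining relation by $b^{-n}$ yields $a_{n-1}=a_n^{2}$ for every $n\ge 1$, so the cyclic groups $\langle a_0\rangle\subseteq\langle a_1\rangle\subseteq\cdots$ form an ascending chain. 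Its union $U$ is abelian, is invariant under conjugation by $b$ and by $a=a_0$ (which lies in the abelian group $U$), hence $U\trianglelefteq G$; as $U$ is generated by conjugates of $a$ and contains $a$, we conclude $U=G'$. This proves (i) and exhibits $G'=\bigcup_n\langle a_n\rangle$ as an ascending union of cyclic groups with $a_{n-1}=a_n^2$, i.e. a copy of $\mathbb{Z}[1/2]$.

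To certify that no unexpected collapse occurs I would form $H=\mathbb{Z}[1/2]\rtimes\langle t\rangle$, where $t$ acts on the additive group $\mathbb{Z}[1/2]$ by doubling, $x^t=2x$. The defining relation holds under $a\mapsto 1$, $b\mapsto t$, so von Dyck's theorem (\cite{Rob81}, Theorem 2.2.1) produces an epimorphism $\theta\colon G\to H$. Since $\theta(a_n)=1/2^{\,n}$ has infinite additive order, each $\langle a_n\rangle$ is infinite cyclic and $G'$ is torsion-free; and because $\theta(b)=t$ has infinite order while $\theta(G')\subseteq\mathbb{Z}[1/2]$, a relation $b^{m}\in G'$ would force $t^{m}=1$, i.e. $m=0$. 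Combined with $G=G'\langle b\rangle$ and $G'\trianglelefteq G$, this gives $G'\cap\langle b\rangle=\{1\}$ and hence the splitting (ii); in fact $\theta$ is then an isomorphism.

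Orderability (iii) I would obtain exactly as in Section \ref{sec3}: $G$ is metabelian by (i), and it is an $R^\star$-group because, by (iv), every conjugate of a fixed $c\in G'$ has the form $c^{2^{m}}$ with $2^{m}>0$, so a product of such conjugates equals $c^{\sum 2^{m_i}}$ with $\sum 2^{m_i}\neq 0$, which (by torsion-freeness of $G'$) is trivial only when $c=1$; the part outside $G'$ is handled modulo $G'\cong\mathbb{Z}[1/2]$, where $G/G'\cong\mathbb{Z}$. Then \cite{BR}, Theorem 4.2.2 applies. Statement (iv) itself is immediate from the chain: if $c=a_n^{k}\in G'$, then $a_n^{b}=a_{n-1}=a_n^{2}$ gives $c^{b}=c^{2}$.

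Finally, for (v) write $g=c_0b^{m}$ with $c_0\in G'$ and $m\neq 0$ (as $g\notin G'$); for $x\in G'$, commutativity of $G'$ and iteration of (iv) give $x^{g}=x^{b^{m}}=x^{2^{m}}$, so $g$ centralises $x$ exactly when $x^{2^{m}-1}=1$, forcing $x=1$ since $2^{m}-1\neq 0$ and $G'$ is torsion-free; thus $C_{G'}(g)=\{1\}$. A central $z$ commutes with $b$: if $z\in G'$ then (iv) gives $z^{2}=z^{b}=z$, so $z=1$; if $z\notin G'$ then $C_{G'}(z)=G'\neq\{1\}$ contradicts the previous line. Hence $Z(G)=\{1\}$. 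The only genuinely non-formal point in the whole argument is ruling out collapse of the presentation --- that $a$ and $b$ have infinite order and the chain $\langle a_n\rangle$ is strictly ascending --- and this is precisely what the map $\theta$ onto the concrete model $H$ secures; everything else is a direct computation.
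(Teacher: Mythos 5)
Your proof is correct, but it is genuinely more self-contained than the paper's: for items (i)--(iv) the paper simply cites Theorem~10 of \cite{FHLMS3} and its proof, whereas you reconstruct the whole structure theory of $BS(1,2)$ from scratch. Your key moves --- identifying $G'$ with the ascending union $\bigcup_n\langle a^{b^{-n}}\rangle$ of infinite cyclic groups satisfying $a_{n-1}=a_n^2$, and certifying non-collapse via von Dyck's theorem applied to the concrete model $\mathbb{Z}[1/2]\rtimes\langle t\rangle$ --- closely parallel what the authors do in Section~\ref{sec3} for the \emph{other} group $\langle a,b\mid a^{b^2}=aa^b,\ [a,a^b]=1\rangle$ (there the model is $(\langle u\rangle\times\langle v\rangle)\rtimes\langle t\rangle$ and the obstruction is the irrationality of the golden ratio; here it is torsion-freeness of $\mathbb{Z}[1/2]$ together with $2^m\neq 1$ for $m\neq 0$). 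Your argument for (v) is essentially identical to the paper's: write $g=c_0b^m$ with $m\neq 0$, use commutativity of $G'$ to reduce to conjugation by $b^m$, and apply (iv) to get $x^{2^m}=x$, forcing $x=1$. The orderability argument via the $R^\star$ property and \cite{BR}, Theorem~4.2.2 is also the route the paper takes for the Section~\ref{sec3} group, so it is a natural and correct adaptation here; the only places where you are slightly informal --- interpreting $c^{2^m}$ for $m<0$ via the $\mathbb{Z}[1/2]$-module structure on $G'$, and noting $G'\neq\{1\}$ when deriving $Z(G)=\{1\}$ --- are easily patched and do not constitute gaps. What your approach buys is independence from the external reference; what it costs is length, which is presumably why the authors chose to cite instead.
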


\begin{proof}
Clearly $G^{\prime}=a^G=\langle  a^{b^n} \ | \  n \in \mathbb{Z}\rangle $.
For claims (i)-(iv), see Theorem 10 in \cite{FHLMS3} and its proof.

Concerning (v), let $g \in G \setminus G^{\prime}$, $c\in G^{\prime}$ and
suppose that $c^g=c$. By (i) $g=db^s$ for some $s\in \mathbb{Z}\setminus \{0\}$ and 
$c^g=c$ implies that $c^{b^s}=c$. We may assume that  $s\geq 1$ and by (iv) we obtain
$c^{2^s}=c$. Since $2^s>1$, it follows that $c=1$, as required.
\end{proof}
 
Now, let  $\leq$ be a total order in $G$ such that $(G, \leq)$ is an ordered group. 
Let $S = \{x_1, x_2, \dots, x_k\}$ be a subset of $G$ of order $k$ and suppose 
that $x_1 < x_2 < \cdots < x_k$. We study the structure of $S$ if $|S^2| = 3|S|-2.$ 
We begin with the case $k = 3$.

\begin{Proposition} 
\label{prop13}
Let $S \subseteq G$, with $\langle S \rangle = G$ and $|S| = 3$. 
Then $|S^2| = 7$ if and only if one of the following holds:
\begin{itemize}
\item[(i)] $S = \{1, x,  y\}$, with $xy \not= yx$,
\item[(ii)] $S = \{c, c^t, t\}$, with $cc^t = c^tc$ and $c, c^t \in G^{\prime}$,
\item[(iii)] $S = \{x, xc, xc^2\}$, where $c > 1$ and either $c^{x} = c^2$ or $(c^2)^x = c.$
\end{itemize}
Furthermore, in any case, either $S\cap G^{\prime}\subseteq \{1\}$ or $|S\cap G^{\prime}|=2$.
\end{Proposition}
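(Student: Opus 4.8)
The plan is to reduce to Propositions \ref{prop3} and \ref{prop4}, which already classify three-element sets $\{x_1,x_2,x_3\}$ (with $x_1<x_2<x_3$) satisfying $|S^2|=7$ according to whether a pair of consecutive elements commutes, and then to use the arithmetic of $BS(1,2)$ to discard the shapes that cannot occur there. Throughout I use that $\langle S\rangle=G$ is non-abelian and that, by Proposition \ref{prop12}, $Z(G)=\{1\}$, $G^{\prime}$ is abelian, and $c^b=c^2$ for every $c\in G^{\prime}$.

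For the forward implication I split on commutativity. If $x_1x_2=x_2x_1$ or $x_2x_3=x_3x_2$, Proposition \ref{prop3} applies and gives either $S\cap Z(G)\neq\emptyset$ or $S=\{c,c^t,t\}$ with $cc^t=c^tc$. In the first subcase $Z(G)=\{1\}$ forces $1\in S$, so $S=\{1,x,y\}$ and non-abelianness yields $xy\neq yx$, which is (i). In the second subcase set $d=[c,t]=c^{-1}c^t\in G^{\prime}$; since $c$ and $t$ do not commute, $d\neq1$, and expanding $cc^t=c^tc$ gives $cd=dc$. Thus a non-trivial element of $G^{\prime}$ centralises $c$, so Proposition \ref{prop12}(v) forces $c\in G^{\prime}$, hence $c^t\in G^{\prime}$ and $t\notin G^{\prime}$, which is (ii).

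If instead $x_1x_2\neq x_2x_1$ and $x_2x_3\neq x_3x_2$, Proposition \ref{prop4} offers three shapes, and the crux of the argument is to eliminate its cases (i) and (ii); I expect this to be the main obstacle. Both require $c\in G^{\prime}$, $c>1$, with $c^{x^2}=cc^x$. Writing $x=db^s$ with $d\in G^{\prime}$ and using that $G^{\prime}$ is abelian gives $c^x=c^{b^s}=c^{2^s}$, whence $c^{x^2}=c^{2^{2s}}$ while $cc^x=c^{1+2^s}$. Since $c$ has infinite order this forces $2^{2s}=1+2^s$, i.e. $\lambda^2-\lambda-1=0$ for $\lambda=2^s$; but the roots $(1\pm\sqrt 5)/2$ are irrational whereas $2^s$ is rational, a contradiction. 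Hence only Proposition \ref{prop4}(iii) survives, $S=\{x,xc,xc^2\}$ with $c^x=c^2$ or $(c^2)^x=c$, and after relabelling the extreme element we may assume $c>1$, which is (iii). This is precisely where $BS(1,2)$ diverges from the companion Proposition \ref{prop9}: there $c^x=c^2$ is impossible and shapes (i) and (ii) survive, whereas here it is the Fibonacci relation $c^{x^2}=cc^x$ that cannot hold.

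It remains to record the behaviour of $S\cap G^{\prime}$ and to prove the converse. In (i) at most one of $x,y$ lies in $G^{\prime}$ (otherwise $\langle S\rangle$ would be abelian), so $S\cap G^{\prime}$ is either $\{1\}$ or a two-element set; in (ii) we found $S\cap G^{\prime}=\{c,c^t\}$; in (iii) the defining relation gives $[c,x]\in G^{\prime}\setminus\{1\}$, forcing $c\in G^{\prime}$ and $x\notin G^{\prime}$, so $S\cap G^{\prime}=\emptyset$. The converse is a direct computation: in (iii), using $cx=xc^2$ one checks that $S^2=\{x^2c^j : 0\le j\le 6\}$ consists of exactly seven elements; in (i) the seven products $1,x,y,x^2,y^2,xy,yx$ are pairwise distinct because $G$ is torsion-free, has unique roots (being orderable) and is non-abelian; and (ii) is similar, relying on the identity $tc^t=ct$. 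Apart from the irrationality argument ruling out Proposition \ref{prop4}(i) and (ii), the whole proof is bookkeeping on top of the quoted results.
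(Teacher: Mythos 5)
Your proposal is correct and takes essentially the same route as the paper: split on whether a consecutive pair commutes, invoke Propositions \ref{prop3} and \ref{prop4}, use $Z(G)=\{1\}$ and Proposition \ref{prop12}(v) to pin down cases (i) and (ii), and then kill cases (i)--(ii) of Proposition \ref{prop4} by showing $c^{x^2}=cc^x$ is impossible in $BS(1,2)$. The only (cosmetic) difference is in that last step: you get the single equation $\lambda^2=\lambda+1$ with $\lambda=2^s$ and appeal to the irrationality of the golden ratio (which is fine once $c^{b^s}$ for $s<0$ is read inside $G^{\prime}\cong\mathbb{Z}[1/2]$ rather than as a literal power of $c$), whereas the paper treats $s\geq 1$ and $s\leq -1$ separately, deriving $4^s=2^s+1$ and $4^s+2^s=1$ respectively; the two computations are equivalent.
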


\begin{proof}
Suppose that $S = \{x_1, x_2, x_3\}$ and $|S^2| = 7$. If either $x_1x_2 = x_2x_1$
or  $x_2x_3 = x_3x_2$, then either (i) or (ii) holds by Proposition \ref{prop3}, since $Z(G) = \{1\}$. 

If $x_1x_2 \not= x_2x_1$ and $x_2x_3 \not= x_3x_2$, then (iii) holds 
by Proposition \ref{prop4}. In fact, cases (i) and (ii) of Proposition \ref{prop4} cannot occur, since the
relation $c^{x^2} = cc^x$ with $c > 1$ is impossible in the group $BS(1,2)$. 
For, if $c^{x^2} = cc^x$, then $c^x$ and hence $c$ belong to $G^{\prime}$ and by Proposition \ref{prop12} (iv) 
$c^b = c^2$. Clearly $x\notin G^{\prime}$, so we may assume that $x=b^s$ for some
$s\in \mathbb{Z}\setminus \{0\}$. If $x = b^s$ with $s \geq 1$, then $c^x = c^{b^s} = c^{2^s}$, 
$c^{x^2} = c^{4^s}$ and since $c^{x^2} = c^xc$, we get $c^{4^s} = c^{2^s+1}$. But $c>1$, 
so $4^s = 2^s+1$, a contradiction. Similarly, if  $x = b^{-s}$ 
with $s > 0$, then $c^{x^2} = cc^x$ implies 
$c = c^{x^{-2}}c^{x^{-1}} = c^{4^s}c^{2^s}$ and since $c>1$, we get
the contradiction $1 = 4^s+2^s$. So it follows that one of (i), (ii) or (iii) must hold.

In particular, if (i) holds, then either  $S \cap G^{\prime}=\{1\}$ or $S \cap G^{\prime}=\{1,z\}$
with $z\in \{x,y\}$, since $G^{\prime}$ is abelian, and $|S\cap G^{\prime}|=2$, as required. 

In case (ii), $c^t=cc^tc^{-1}=cd$ with
$d\in G^{\prime}\setminus \{1\}$ and $cc^t = c^tc$ implies that $cd=dc$. Hence, by 
Proposition \ref{prop12} (v), $c\in S \cap G^{\prime}$ and since $c^t\neq c$, it follows that
$t\notin  S\cap G^{\prime}$. Thus $S \cap G^{\prime}=\{c,c^t\}$ and 
$|S\cap G^{\prime}|=2$, as required.

Finally, if case (iii) holds, then $c\in G^{\prime} $ 
and $x\notin G^{\prime}$ since $[c,x]\neq 1$. Hence $S \cap G^{\prime}=\emptyset$,
as required.

A direct calculation proves the converse of the main statement.
\end{proof}

Now we study the structure of $S$ if $k \geq 4$. 
We begin with the case $S = A \dot \cup \{y\}$, where $\langle A \rangle $ is abelian.

\begin{Proposition} 
\label{prop14}
Let $S \subseteq G$, with $\langle S \rangle = G$. Suppose that
$|S| = k \geq 4$ and $S = A \dot \cup \{y\}$ with $ \langle A \rangle$ abelian. 
Then $|S^2| = 3|S|-2 $ 
if and only if  $k = 4$ and $S = \{1, c, c^2, y\}$, with either $c^y = c^2$   
or $(c^2)^y = c$.
\end{Proposition}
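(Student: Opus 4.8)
The plan is to prove the forward direction (the hard one) and dispatch the converse by direct calculation. So assume $S=A\,\dot\cup\,\{y\}$ with $\langle A\rangle$ abelian, $\langle S\rangle=G$, $|S|=k\geq 4$, and $|S^2|=3|S|-2$. Since $\langle A\rangle$ is abelian and $\langle S\rangle=G$ is non-abelian, $S$ is not contained in an abelian subgroup, so the hypotheses of Proposition \ref{prop2} are met. That proposition tells us that either $\langle S\rangle$ is abelian --- excluded here --- or $S=\{x,xc,xc^{k-2},y\}$ wait; more precisely $S=\{x,xc,\dots,xc^{k-2},y\}$ and one of its three cases (i)--(iii) holds. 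This is the key structural input, and the main obstacle will be ruling out the configurations that cannot live inside $BS(1,2)$.

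First I would eliminate case (i) of Proposition \ref{prop2}, where $[c,x]=[c,y]=1$ and $[x,y]=c^{\pm1}$. Here $c\in G'$ is a nontrivial commutator, and $x,y$ both centralize $c$. If $x\notin G'$ then $C_{G'}(x)=\{1\}$ by Proposition \ref{prop12}(v), forcing $c=1$, a contradiction; and $y$ cannot centralize $c\in G'\setminus\{1\}$ either unless $y\in G'$. But if both $x,y\in G'$ then $[x,y]=1\neq c^{\pm1}$, again a contradiction. So case (i) is impossible, exactly as the torsion-free/centralizer arguments in Proposition \ref{prop9} and Proposition \ref{prop11} handle the analogous obstruction.

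Next I would examine cases (ii) and (iii) of Proposition \ref{prop2}, which both force $|S|=4$, hence $k=4$. In both cases the relevant twisting relation is of the form $c^y=c^2$ or $(c^2)^y=c$ (in case (iii)) or $[x,y]=c=(c^2)^y$ resp. $[y,x]=c^2=c^y$ (in case (ii)). The relations $c^y=c^2$ and $(c^2)^y=c$ are precisely the ones permitted in $G=BS(1,2)$ by Proposition \ref{prop12}(iv), so they survive; the point will be to show that in case (ii) the extra commutator condition $[x,y]=c$ is incompatible with $[c,x]=1$ inside $BS(1,2)$. The argument mirrors the one in Proposition \ref{prop13}: $c\in G'$ and $[c,x]=1$ with $c\neq 1$ forces $x\in G'$ by Proposition \ref{prop12}(v), whence $[x,y]\in G'$ is a square (as $c^2=\cdots$ relations show $c$ cannot simultaneously be a commutator and satisfy the twisting), producing a numerical contradiction of the $4^s=2^s+1$ type. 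This reduces everything to case (iii), i.e.\ $[c,x]=[x,y]=1$ with $c^y=c^2$ or $(c^2)^y=c$.

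Finally, in case (iii) I would normalize. Since $[c,x]=[x,y]=1$ and $\langle S\rangle=G$ is non-abelian, the non-commuting pair must be $c,y$, and $x$ centralizes both $c$ and $y$, so $x\in Z(G)=\{1\}$ by Proposition \ref{prop12}(v); thus $x=1$ and $S=\{1,c,c^2,y\}$ with $c^y=c^2$ or $(c^2)^y=c$, which is exactly the asserted form. The converse --- that such an $S$ indeed satisfies $|S^2|=10=3\cdot4-2$ --- is a direct computation: one lists the nine products of $\{1,c,c^2,y\}$ against itself, uses $c^y=c^2$ (or its mirror) to rewrite each $yc^i$, and checks there are exactly ten distinct elements. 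I expect the only delicate point in the whole argument to be the elimination of case (ii), where one must convert the abstract commutator identity into the concrete arithmetic of $BS(1,2)$ and invoke $c>1$ together with Proposition \ref{prop12}(iv)--(v) to force the contradiction.
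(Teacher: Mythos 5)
Your treatment of case (i) of Proposition \ref{prop2} and of case (iii) is fine and matches the paper's (the paper dismisses (i) by noting that it would force $\langle S\rangle$ to be nilpotent of class at most $2$, impossible since $Z(G)=\{1\}$; your centralizer argument amounts to the same thing). The genuine gap is your plan for case (ii) of Proposition \ref{prop2}: you propose to \emph{eliminate} it by deriving ``a numerical contradiction of the $4^s=2^s+1$ type,'' but case (ii) is not eliminable --- it actually occurs in $BS(1,2)$ and must instead be shown to \emph{reduce} to the asserted normal form. Concretely, take $d\in G'$, $y$ with $d^y=d^2$, and set $c=d^{-1}$, $x=d^2=c^{-2}$. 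Then $[c,x]=1$, $c^y=c^2$ and $[y,x]=(d^{-2})^yd^2=d^{-4}d^2=d^{-2}=c^2$, so the configuration of case (ii) (second alternative) is realized with $S=\{x,xc,xc^2,y\}=\{d^2,d,1,y\}$, which is exactly the set $\{1,d,d^2,y\}$ of the conclusion after the substitution $d=c^{-1}$. No contradiction is available, and if you pursued your plan you would get stuck.

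What the paper does instead in case (ii): from $[c,x]=1$ and $c\in G'\setminus\{1\}$ it deduces $x\in C_G(c)\subseteq G'$ by Proposition \ref{prop12}(v), writes $y=eb^s$, and uses $c^b=c^2$ (Proposition \ref{prop12}(iv)) to pin down $s=1$ (when $c^y=c^2$) or $s=-1$ (when $(c^2)^y=c$). It then combines $x^y=x^{2}$ (resp.\ $(x^2)^y=x$) with the commutator condition $[y,x]=c^2$ (resp.\ $[x,y]=c$) to solve for $x$ explicitly, getting $x=c^{-1}$ wait --- more precisely $c^2=x^{-1}$, i.e.\ $x=c^{-2}$ --- and hence $S=\{c^{-2},c^{-1},1,y\}=\{1,d,d^2,y\}$ with $d=c^{-1}$ and $d^y=d^2$ (resp.\ $(d^2)^y=d$). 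You should replace your ``rule it out'' step with this computation; without it the forward direction of the proposition is not established, since case (ii) would remain an unresolved alternative.
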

 
\begin{proof}
Suppose that $|S^2| = 3|S|-2$. Since $Z(G)=\{1\}$, $\langle S \rangle = G$ is not 
a nilpotent group of class at most $2$ and by Proposition \ref{prop2} we have $|S| = 4$, 
$S = \{x, xc, xc^2, y\}$ and either (ii) or (iii) of Proposition \ref{prop2} holds. 
If (iii) holds, then $x\in Z(G)=\{1\}$ and $x=1$. Thus $S$ has the required structure in this case. 

Now suppose that (ii) of Proposition \ref{prop2} holds. Then $c \in G^{\prime}$ and by Proposition \ref{prop12} (v)
$x \in C_G(c) \subseteq G^{\prime}$. Moreover, $y\notin  G^{\prime}$ since $G^{\prime}$
is abelian. Hence, by Proposition  \ref{prop12} (ii), $y=eb^s$ for some $s\in \mathbb {Z}\setminus \{0\}$
and $e\in G^{\prime}$.

If $c^y = c^2$, then $c^{b^s}=c^2$ and since $c^b=c^2$ by Proposition  \ref{prop12} (iv), 
we must
have $s=1$. Thus $x^y=x^b=x^2$ by  Proposition  \ref{prop12}  (iv) and $c^2=[y,x]=(x^{-1})^yx=x^{-2}x=x^{-1}$.
Write $d=c^{-1}$. Then $\{x,xc,xc^2,y\}=\{d^2,d,1,y\}$ with $d^y=d^2$, as required.

If $(c^2)^y=c$, then it follows that $(c^2)^{b^s}=c$ and the only possibility for $s$
is $s=-1$.
Now $x^b=x^2$, so $(x^2)^y=(x^2)^{b^{-1}}=x$ and $(c^2)^y=c=[x,y]=x^{-1}x^y$.
Thus $x^y=xc$ and $x=(x^2)^y=xcxc=x^2c^2$, yielding $x=c^{-2}$. Write $d=c^{-1}$.
Then $\{x,xc,xc^2,y\}=\{d^2,d,1,y\} $  with $(d^2)^y=(d^2)^{b^{-1}}=d$, as required.

A direct calculation proves the converse.
\end{proof}

We can now describe the structure of $S$ if $k = 4$.

\begin{Proposition}
\label{prop15}
Let $S \subseteq G$, with $\langle S \rangle = G$ and suppose that $|S| = 4$. 
Then $|S^2| = 10$ if and only if one of the following holds:
\begin{itemize}
\item[(i)]  $S = \{1, c, c^2, y\},$ with either $c^y = c^2$ or $(c^2)^y = c$,
\item[(ii)]  $S = \{x, xc, xc^2, xc^{3}\},$  with $ c > 1$ and either $ c^{x} = c^2$ or $(c^2)^x = c.$
\end{itemize}
\end{Proposition}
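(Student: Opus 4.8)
The plan is to follow closely the proof of Theorem~\ref{2emetheo}(iii), replacing Proposition~\ref{prop9} by Proposition~\ref{prop13} and the impossibility of the relation $c^x=c^2$ by its genuine occurrence in $BS(1,2)$. The converse is the routine direction: for a set of type (i) it is exactly Proposition~\ref{prop14}, while for a set of type (ii) I would write $S=\{xc^i:i\in\{0,1,2,3\}\}$ and compute $xc^i\cdot xc^j=x^2c^{2i+j}$ when $c^x=c^2$, and $xc^i\cdot xc^j=x^2(c^x)^{i+2j}$ (with $(c^x)^2=c$) when $(c^2)^x=c$; in either case the exponent runs over exactly the ten values $0,1,\dots,9$ as $(i,j)$ ranges over $\{0,1,2,3\}^2$, so $|S^2|=10$. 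Hence only the direct implication needs argument.

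For that, assume $|S^2|=10$ and write $x_1<x_2<x_3<x_4$, $T=\{x_1,x_2,x_3\}$ and $V=\{x_2,x_3,x_4\}$. I would first dispose of the case in which some three elements of $S$ span an abelian subgroup: then $S=A\,\dot\cup\,\{y\}$ with $\langle A\rangle$ abelian, and Proposition~\ref{prop14} gives precisely type (i). So I may assume that \emph{every} $3$-subset of $S$ generates a non-abelian group; in particular $\langle T\rangle$ and $\langle V\rangle$ are non-abelian, so Proposition~\ref{prop5} yields $x_2x_3\neq x_3x_2$, and since $G'$ is abelian we also get $|S\cap G'|\le 2$. By Proposition~\ref{prop1}, $|T^2|\le|S^2|-3=7$, and Theorem~D forces $|T^2|=7$; applying Proposition~\ref{prop1} to the reverse order (which is again a group order) gives $|V^2|=7$ in the same way. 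Because $x_3x_4,x_4^2\notin T^2$ by the ordering and $|S^2|=|T^2|+3$, one of $x_1x_4,x_2x_4$ must lie in $T^2$, whence $x_4\in\langle T\rangle$ and $G=\langle T\rangle$; symmetrically $G=\langle V\rangle$. Thus both $T$ and $V$ fall under Proposition~\ref{prop13}.

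The heart of the argument is to eliminate types (i) and (ii) of Proposition~\ref{prop13} for $T$. If $T$ were of type (ii), then $t\in\{x_2,x_3\}$ (since $c$ and $c^t$ commute while $x_2x_3\neq x_3x_2$), so $x_1$ and one of $x_2,x_3$ are the two non-identity elements of $T\cap G'$; the $G'$-element lying in $V$ then forces $|V\cap G'|=2$ by the trichotomy in Proposition~\ref{prop13}, hence $x_4\in G'$ and $|S\cap G'|=3$, a contradiction. If $T$ were of type (i), then $1\in T$ and $x_2x_3\neq x_3x_2$ force $x_1=1$; then $1\notin V$, so $V$ is of type (ii) or (iii). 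Type (ii) puts $\{1,c,c^t\}\subseteq S\cap G'$, again impossible; and if $V=\{v,vc,vc^2\}$ is of type (iii), then $S=\{1,v,vc,vc^2\}$ and, using $v\notin G'$ to check disjointness, $S^2=V^2\,\dot\cup\,\{1,v,vc,vc^2\}$ has cardinality $11$, contradicting $|S^2|=10$. Therefore $T$ is of type (iii).

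It remains to treat $T=\{x,xc,xc^2\}$ with $c>1$ and $c^x=c^2$ or $(c^2)^x=c$, so that $x_1=x$, $x_2=xc$, $x_3=xc^2$; by the symmetry $x\mapsto x^{-1}$ (the inversion trick of Lemmas~\ref{lemma2} and~\ref{lemma4}) I may assume $c^x=c^2$, whence $T^2=\{x^2c^k:0\le k\le 6\}$. As in Lemmas~\ref{lemma1}--\ref{lemma4}, the fact that $x_1x_4$ or $x_2x_4$ lies in $T^2$ forces $x_4=xc^m$ for an integer $m$, with $m\ge 3$ since $x_4>xc^2$. Computing $|S^2|$ through the exponents $2i+j$ with $i,j\in\{0,1,2,m\}$ then gives $|S^2|=10$ exactly when $m=3$ and $|S^2|>10$ when $m\ge 4$; hence $S=\{x,xc,xc^2,xc^3\}$, which is type (ii). I expect the main obstacle to be precisely this combination of $G'$-counting and exponent bookkeeping: the delicate point is to separate the genuine geometric progression from the $|S^2|=11$ near-misses, which relies on the facts that a geometric progression in $G$ has no two commuting terms and that three elements of the abelian group $G'$ can never occur together in $S$.
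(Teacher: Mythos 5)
Your proposal follows the paper's proof essentially step for step: the same initial reduction via Propositions \ref{prop14}, \ref{prop1}, \ref{prop5} and \ref{prop13} together with Theorem D to the case $T=\{x,xc,xc^2\}$, the same $G'$-counting to exclude types (i) and (ii) of Proposition \ref{prop13} for $T$, and the same exponent bookkeeping to force $x_4=xc^3$. The only (harmless) deviation is that you merge the two subcases $c^x=c^2$ and $(c^2)^x=c$ via the inversion trick of Lemmas \ref{lemma2} and \ref{lemma4} --- legitimate once one notes that $x_4\in xG'$ already follows from the counting argument and that left multiplication preserves the ordering --- whereas the paper treats the subcase $(c^2)^x=c$ directly, using products of the form $x_4x_i$ rather than $x_ix_4$ to obtain an integer exponent for $x_4$.
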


\begin{proof} Suppose that $S = \{x_1, x_2, x_3, x_4\}$ with $x_1 < x_2 < x_3 <x_4$
and $|S^2| = 10$. Let $T = \{x_1, x_2, x_3\}$ and $V = \{x_2, x_3, x_4\}$. If 
$S = A \dot \cup \{y\}$ with $\langle A \rangle$ abelian, then (i) holds by Proposition \ref{prop14}. 
Therefore we may assume that each triple of elements in $S$ generates a non-abelian group.
In particular, $\langle T \rangle$ and $\langle V \rangle$ are non-abelian. 

By Proposition \ref{prop1} $|T^2|\leq 7$ and since $\langle T \rangle$ is non-abelian, Theorem D
implies that $|T^2|= 7$. The elements $x_3x_4, x_4^2$ are not in $T^2$
because of the ordering,
so one of the elements $x_1x_4, x_2x_4$ belongs to $T^2$, since $|S^2| = |T^2|+ 3$.
Hence $x_4 \in \langle T \rangle$ and $G = \langle T \rangle$. Arguing similarly, it follows
that $|V^2| = 7$
and $\langle V \rangle = G$. Therefore $T$ and $V$ satisfy the hypotheses of Proposition \ref{prop13}.

If $1\in T$, then $1=x_1$ by Proposition \ref{prop5} and $V$ satisfies either (ii) or (iii) of 
Proposition \ref{prop13}. But then $S^2 = V^2 \dot \cup V \dot \cup \{1\}$ and $|S^2| = 11$, a
contradiction. Therefore we may assume that $1\notin T$. 

Thus, by Proposition \ref{prop13}, $T$ satisfies either (ii) or (iii) of that proposition. If
$|T\cap G^{\prime}|=2$, then also $|V\cap G^{\prime}|=2$ 
and $V\cap G^{\prime}\neq \{x_2,x_3\}$ by Proposition \ref{prop5}. 
Hence $|S\cap G^{\prime}|=3$, which is impossible, since
$\langle S\cap G^{\prime}\rangle $ is abelian. Therefore 
$T\cap G^{\prime}$ is an empty set and 
$T=\{x, xc, xc^2\},$  with $ c > 1$ and either $ c^{x} = c^2$ or $(c^2)^x = c$.

Suppose, first, that $c^x = c^2$. Then 
$T^2 = \{x^2, x^2c, x^2c^2, x^2c^3, x^2c^4, x^2c^5, x^2c^6\}$. Obviously $x_4^2 \notin T^2$, 
because of the ordering. Since $|S^2| = |T^2|+3$, it follows that one of the elements 
$xx_4, xcx_4, xc^2x_4$ belongs to $T^2$. Therefore $x_4 = xc^s$ for some integer $s$. 
Similarly one of the elements $xc^sx = x^2c^{2s}, xc^sxc = x^2c^{{2s}+1}, 
xc^sxc^2 = x^2c^{2s+2}$ belongs to $T^2$. Since $x_4=xc^s>xc^2$ and $c>1$, we must have 
$s\geq 3$  and since $x^2c^{2s}\leq x^2c^6$, the only possible case is that $s = 3$. 
Thus (ii) holds.

Now suppose that $(c^2)^x = c.$ In this case 
$$
T^2 = \{x^2, x^2c, x^2c^2, x^2c^x, x^2c^xc, x^2c^xc^2, x^2c^3\}
$$ 
and, as before, 
one of the elements $x_4x, x_4xc, x_4xc^2$ belongs to $ T^2$. Thus 
$x_4xc^l = xc^ixc^j$ for some integers $i,j,l$ and 
$x_4 = xc^ixc^{j-l}x^{-1} = xc^{i+2(j-l)} = xc^s$ for some integer $s\geq 3$, since $x_4>xc^2$. 

Similarly one of the elements $xx_4, xcx_4, xc^2x_4$ belongs to $T^2$. 
If $xc^2xc^s \in T^2$, then $x^2c^{s+1} \in T^2$  and $s\leq 2$, a contradiction, and 
if $xcxc^s \in T^2$, then $x^2c^xc^s \in T^2$ again implying that $s\leq 2$, 
a contradiction. Therefore the only possibile case is that $x^2c^s \in T^2$, forcing $s = 3$ and 
yielding (ii) again. Thus either (i) or (ii) holds in all cases.

A direct calculation proves the converse.
\end{proof}

Now we can prove Theorem \ref{3emetheo}.

\begin{proof}[Proof of Theorem \ref{3emetheo}]
Suppose that $|S| = k \geq 4$ and $  |S^2| = 3k-2$. If $k = 4$, then $S$ has the required structure 
by Proposition \ref{prop15}. So suppose that $k > 4. $

Write $S = \{x_1, x_2, \dots, x_k\}$, with $x_1< x_2 < \cdots <x_k$, 
$T = \{x_1, \dots, x_{k-1}\}$ and $V = \{x_2, \dots, x_k\}$. 
Since $k\geq 5$, it follows by Proposition \ref{prop14} that each  subset of $S$ with $k-1$
elements generates a non-abelian group. In particular, $\langle T \rangle$  
is non-abelian and by Theorem D  $|T^2| \geq 3|T|-2$. But by  Proposition \ref{prop1}
$|T^2| \leq |S^2| - 3 = 3|T|-2$, so it follows that  $|T^2| = 3|T|-2=|S^2| - 3$. 
Since  $k-1 \geq 4 $,
one of the $k-1$ elements $x_1x_k, x_2x_k, \dots, x_{k-1}x_k$ must belong to $T^2$.
Hence $x_k \in \langle T \rangle$ and $G = \langle T \rangle$. Similar arguments yield 
$|V^2| = 3|V|-2$ and $\langle V \rangle = G.$ 
It follows by induction that either $T = \{x, xc, \dots, xc^{k-2}\}$ 
with $c > 1$ and either $c^x = c^2$ or $(c^2)^x = c$, or  $|T| = 4$ and 
$T = \{1, c, c^2, y\}$ with either $c^y = c^2$ or $(c^2)^y = c$. 

First suppose that the latter case holds. Then $c\in G^{\prime}$, $|T \cap G^{\prime}| = 3$ and 
$|V \cap G^{\prime}| \geq 2$. But induction applied to $V$ implies that
$|V \cap G^{\prime} |\in \{0,3\}$, so $|V \cap G^{\prime}| = 3$. 
If $T \cap G^{\prime} \neq V \cap G^{\prime}$, then $|S \cap G^{\prime}|=4=k-1$
and we have reached a contradiction, since $G^{\prime}$ is abelian.
Hence $T \cap G^{\prime} = V \cap G^{\prime}=\{x_2, x_3, x_4\}$.
Now, if one of the elements $x_1x_2, x_1x_3, x_1x_4$ belongs to $\{x_2, x_3, x_4\}^2$, 
then $\langle T \rangle$ is abelian and if one of the elements $x_2x_5, x_3x_5, x_4x_5$ 
belongs to $\{x_2, x_3, x_4\}^2$, then $\langle V \rangle$ is abelian, 
a contradiction in both cases.  So neither of the above six elements belongs 
to  $\{x_2, x_3, x_4\}^2$ and  since $x_1x_2 < x_1x_3<x_1x_4<x_2x_5<x_3x_5<x_4x_5$, it 
follows that $|S^2| \geq 6+|\{x_2, x_3, x_4\}^2|\geq 6+5=11$, a contradiction.
 
So we may assume that $T = \{x, xc, \dots, xc^{k-2}\}$, with $c > 1$
and either $c^x = c^2$ or $(c^2)^x = c$. Thus $x_i=xc^{i-1}$ 
for $i=1,2,\dots,k-1$.

First assume that $c^x = c^2$. 
Since $|T^2| = |S^2|-3$ and $k-1\geq 4$, there exists an integer $i$, $1\leq i\leq k-1$, such that 
$xc^ix_k \in T^2$. Hence $xc^ix_k = xc^jxc^l$
for some integers $i, j, l$ and 
$x_k = c^{j-i}xc^l=xc^{2(j-i)+l}=xc^s$ for an integer $s$. Now either 
$x_kx_{k-2} \in T^2$ or $x_kx_{k-2} \notin T^2$. 

If $x_kx_{k-2} \in T^2$, then  $xc^sxc^{k-3} = xc^rxc^{k-2}$ for some integer $r$
and $x^2c^{2s+k-3}=x^2c^{2r+k-2}$. 
Thus $2s+k-3 = 2r+k-2$, which is impossible, since these
numbers are of different parity. 

So suppose that $x_kx_{k-2} \notin T^2$. Then $S^2\setminus T^2=\{x_kx_{k-2}, x_kx_{k-1},x_k^2\}$
and since $x_{k-1}x_k\in S^2\setminus T^2$, it follows that either $x_{k-1}x_k=x_kx_{k-1}$
or $x_{k-1}x_k=x_kx_{k-2}$. If $x_{k-1}x_k=x_kx_{k-1}$, then $x^2c^{2(k-2)+s}=x^2c^{2s+k-2}$
and $2(k-2)+s=2s+k-2$, yielding $s=k-2$ and $x_k\in T$, a contradiction.
Hence $x_kx_{k-2} = x_{k-1}x_k$, yielding $2s+k-3=2(k-2)+s$. Thus $s = k-1$, as required.

Now, suppose that $(c^2)^x = c$. Then $c^{x^{-1}}=c^2$ and arguing as in the previous case,
there exists $i$  such that $1\leq i\leq k-1$
and $x_kxc^i \in T^2$. Hence $x_kxc^i= xc^jxc^l$ for some integers $i, j, l$
and
$x_k = xc^jxc^{l-i}x^{-1} = xc^j(c^{l-i})^{x^{-1}}=xc^s$ for some integer $s$. 
Now either 
$xc^{k-3}xc^s \in T^2$ or $xc^{k-3}xc^s \notin T^2$. 

If $xc^{k-3}xc^s \in T^2$, then  
$xc^{k-3}xc^s = xc^{k-2}xc^r$ for some integer $r$ and $xc^s = cxc^r$. Thus 
$c^{2s}x=c^{1+2r}x$, which is impossible. 

Thus $xc^{k-3}xc^s \notin T^2$. Then $S^2\setminus T^2=\{x_{k-2}x_k, x_{k-1}x_k,x_k^2\}$ 
and since $x_kx_{k-1}\in S^2\setminus T^2$, it follows that either $x_kx_{k-1}=x_{k-1}x_k$
or $x_kx_{k-1}=x_{k-2}x_k$. If $x_kx_{k-1}=x_{k-1}x_k$, then $c^sxc^{k-2}=c^{k-2}xc^s$
and $s+2(k-2)=k-2+2s$. Thus $s=k-2$ and $x_k\in T$, a contradiction.
Hence $x_kx_{k-1}=x_{k-2}x_k$, yielding $s+2(k-2)=k-3+2s$. Thus $s = k-1$, as required.
The proof in one direction is complete.

Conversely, suppose that $|S|=k$. If $k = 4$ and $S$ satisfies the corresponding conditions, 
then $|S^2| = 10$ by Proposition \ref{prop15}. If $k>4$ and 
$S = \{x, xc, \dots, xc^{k-1}\}$ with $c^x = c^2$, then  
$$
S^2=\{xc^ixc^j=x^2c^{2i+j}\ \mid \ 0\leq i,j\leq k-1\}.
$$ 

We claim that $|S^2|=3k-2$. Since $0\leq 2i+j\leq 3k-3$, it suffices to show that
each integer $n$ with $0\leq n\leq 3k-3$ can be represented in the form $n=2i+j$
for some $ 0\leq i,j\leq k-1$. If $n=0$, then $n=2\cdot 0+0$ and if $n>0$, then
$n=3a+b$, where $0\leq a\leq k-2$ and $1\leq b\leq 3$.
In the latter case, the required representations are: $n=2a+(a+1)$ if $b=1$, 
$n=2(a+1)+a$ if $b=2$ and $n=2(a+1)+(a+1)$ if $b=3$. 

Hence $|S^2|=3|S|-2$, as required. If 
$S = \{x, xc, \dots, xc^{k-1}\}$ with $(c^2)^x = c$, then $c^{x^{-1}} = c^2$ and
$x^{-1}S^2x=\{c^ixc^jx=c^{i+2j}x^2\ \mid \ 0\leq i,j\leq k-1\}$. As shown above 
$|S^2|=|x^{-1}S^2x|=3|S|-2$, as required.
\end{proof}

\section{Proof of Theorem \ref{1ertheo}}
\label{sec5}
 
Now we can prove Theorem \ref{1ertheo}.

\begin{proof}[Proof of Theorem \ref{1ertheo}]
Let $S \subseteq G$, $|S| = k \geq 4$ and suppose that $|S^2| = 3k-2$ and $\langle S \rangle$
is non-abelian. Then $\langle S \rangle$ satisfies either (ii), or (iii), or (iv), or (v) of Theorem F.
From now on, (ii), (iii), (iv) and (v) denote items of Theorem F and (1i), (1ii) and (1iii) denote items
of Theorem \ref{1ertheo}.

If $G$ is a nilpotent ordered group of class $2$ and $S$ is a finite subset of $G$ 
such that $|S|=k\geq 4$ and $\langle S \rangle$ is non-abelian, then by Proposition \ref{prop6} $|S^2| = 3k-2$
if and only if $S$ satisfies (1i). This takes care of $\langle S \rangle$
satisfying (ii), since in cases (iii), (iv) and (v),  $\langle S \rangle$ is
non-nilpotent. If $\langle S \rangle$ satisfies (iii), 
then Theorem \ref{2emetheo} implies that $k=4$ and case (a) of (1iii) holds. 

Suppose, now, that  $\langle S \rangle$ satisfies (iv). Then $S$ satisfies the assumptions 
of Theorem \ref{3emetheo}. Hence, if $k>4$, then $|S^2| = 3k-2$
if and only if $S$ satisfies (1ii) and  if $k=4$, then $|S^2|=3k-2$ if and only if $S$ 
satisfies 
either (1ii) or case (b) of (1iii). Notice that if $k=4$, then the second case in item (ii)
of Theorem \ref{3emetheo} is of type (1ii).  

Finally, if $\langle S \rangle$ satisfies (v), then $|S| = 4$ and $\langle S \rangle$
is not a nilpotent group of class at most $2$. Hence, by Proposition \ref{prop2}, 
$S$ satisfies one of the cases (c) or (d) of (1iii).
 
Conversely, it follows from our proof that if $S$ satisfies one of the conditions (1i), (1ii) 
or case (b) of (1iii), then $|S^2| = 3k-2$. 
\end{proof}

\section*{Acknowledgements}

This work was supported by the "National Group for Algebraic and Geometric Structures, and their Applications" (GNSAGA - INDAM), Italy.

A.P. is supported by the ANR grant Caesar number 12 - BS01 - 0011. He thanks his colleagues from the University of Salerno for their hospitality 
during the preparation of this paper.

\end{document}